\documentclass[11pt,a4paper]{article}
\usepackage[T1]{fontenc}
\usepackage[utf8]{inputenc}
\usepackage{lmodern}
\usepackage[margin=27mm,headheight=14pt]{geometry}
\usepackage{amsmath,amssymb,amsthm,mathtools}
\usepackage{microtype,enumitem,booktabs,array}
\usepackage[numbers,sort&compress]{natbib}
\usepackage[hidelinks,bookmarksnumbered]{hyperref}
\usepackage{fancyhdr}
\allowdisplaybreaks[2]
\numberwithin{equation}{section}
\newtheorem{theorem}{Theorem}[section]
\newtheorem{proposition}[theorem]{Proposition}
\newtheorem{lemma}[theorem]{Lemma}
\newtheorem{corollary}[theorem]{Corollary}
\theoremstyle{definition}
\newtheorem{definition}[theorem]{Definition}
\newtheorem{remark}[theorem]{Remark}
\setlist[enumerate]{label=\textup{(\roman*)},leftmargin=2.4em}
\newcommand{\R}{\mathbb R}
\newcommand{\C}{\mathbb C}
\newcommand{\dd}{\,\mathrm d}
\newcommand{\supp}{\operatorname{supp}}
\newcommand{\Rea}{\operatorname{Re}}
\newcommand{\Ima}{\operatorname{Im}}
\newcommand{\diver}{\operatorname{div}}
\newcommand{\ind}{\mathbf1}
\newcommand{\wlim}{\mathop{\mathrm{w\!-\!lim}}}
\newcommand{\crit}[2]{\left\|#1\right\|_{L_t^{\frac{2(d+4-2b)}{d}}L_x^{\frac{2(d+4-2b)}{d+2-2b},2}(#2\times\R^d)}}
\newcommand{\dual}[2]{\left\|#1\right\|_{L_t^2L_x^{\frac{2d}{d+2},2}(#2\times\R^d)}}

\makeatletter
\renewcommand{\@biblabel}[1]{#1.}
\makeatother
\hypersetup{pdftitle={Classification of threshold dynamics for the focusing mass-critical inhomogeneous NLS},pdfauthor={},pdfsubject={Focusing mass-critical inhomogeneous NLS}}
\title{Classification of threshold dynamics for the focusing mass-critical inhomogeneous NLS}
\author{Xuan Liu}
\date{}

\begin{document}
	\maketitle

\begin{abstract}
	We study the focusing mass-critical inhomogeneous nonlinear Schr\"odinger equation
	$i\partial_{t}u+\Delta u+|x|^{-b}|u|^{\frac{4-2b}{d}}u=0$
	in dimensions $d\ge3$.  For $\frac12<b<\min\{2,d/2\}$, we prove that
	every forward-global $H^1$ solution with mass
	equal to that of the ground state $Q$ either
	scatters forward or coincides with $e^{it}Q$,
	up to phase rotation and scaling.  As a consequence, every solution with nonradial $H^1$ initial data at the ground-state mass must be global and scatter in both time directions. For $0<b\le\frac12$, analogous rigidity and scattering results hold under an additional finite-variance assumption. The key new ingredients in our proof are a nonradial incoming/outgoing decomposition and exterior frequency-decay estimates, which yield the kinetic-energy localization needed for a localized virial argument.
\end{abstract}

	\noindent\textbf{Keywords.}  Inhomogeneous NLS, Mass-critical,    Focusing, Ground state.  
	
\section{Introduction}\label{sec:intro}
\newtheorem{defn}{Definition}[section]
We consider the focusing mass-critical inhomogeneous nonlinear Schr\"odinger equation

\begin{equation}\label{eq:inls}
	\begin{cases}
		i\partial_tu+\Delta u+|x|^{-b}|u|^{\frac{4-2b}{d}}u=0,\\
		u(0,x)=u_0(x)\in H^1(\R^d).
	\end{cases}
\end{equation}
where $d\ge3$ and $0<b<\min\{2,d/2\}$. The coefficient $|x|^{-b}$ describes a spatially inhomogeneous nonlinear response, a setting motivated in part by models from nonlinear optics; see \cite{Gill00,LT94}. Such power-law coefficients also arise naturally in the study of nonlinearities that decay at spatial infinity; see \cite{GS08}.  

Our aim is to understand the dynamics of \eqref{eq:inls} when the mass of the initial data equals that of the ground state. Before discussing the threshold problem, we recall the notions of strong solutions, blowup, and scattering that will be used throughout the paper.

\begin{defn}[Strong solution]\label{def:solution}
	A function $u : I\times \mathbb{R}^d \to \mathbb{C}$ is a strong solution of \eqref{eq:inls} if, for every compact interval $J\subset I$, it belongs to $C_t^0L^2_x(J\times \mathbb{R} ^d)\cap L^{\frac{2d+8-4b}{d}} _tL_x^{ \frac{2d+8-4b}{d+2-2b},2}(J\times \mathbb{R} ^d)$ and satisfies the Duhamel formula for all $t, t_0 \in I$:
	\begin{equation}
		u(t) = e^{i(t - t_0)\Delta} u(t_0) +i \int_{t_0}^{t} e^{i(t-\tau)\Delta} [ |x|^{-b}|u(\tau)|^ \frac{4-2b}{d} u(\tau) ]d\tau.\notag
	\end{equation}
	 The interval $I$ is called the \textit{lifespan} of $u$. The solution is \textit{maximal-lifespan} if it cannot be extended to a strictly larger interval, and \textit{global} if $I = \mathbb{R}$.
\end{defn}

Associated with the strong-solution class is a notion of blowup expressed in terms of the critical spacetime norm.

\begin{defn}[Blowup]
	A solution $u$ to \eqref{eq:inls} is said to blow up forward in time if there exists $t_0\in I$ such that
	\begin{equation}
		\|u\|_{L_t^{\frac{2d+8-4b}{d}} L_x^{ \frac{2d+8-4b}{d+2-2b},2}([t_0,\sup(I))\times\mathbb{R}^d)}
		=\infty.\notag
	\end{equation}
	Blowup backward in time is defined analogously.
\end{defn}

 By the local theory recalled in Proposition~\ref{prop:local}, a global solution fails to scatter in a given time direction precisely when the critical spacetime norm is infinite in the same direction. Here scattering   means the existence of asymptotic states $u_\pm\in L^2(\mathbb R^d)$ such that
 \begin{equation}
 	\lim_{t\to\pm\infty}\|u(t)-e^{it\Delta}u_\pm\|_{L^2} = 0.\notag
 \end{equation}

The mass is conserved along the flow of \eqref{eq:inls}:

\begin{equation}
	M(u) =: \int_{\mathbb{R}^d}  |u(t,x)|^2\,dx=M(u_0).\label{eq:mass-conservation-intro}
\end{equation}

The equation is called \emph{mass-critical} because the scaling $u(t,x)\mapsto \lambda ^{\frac{d}{2}} u(\lambda^2 t,\lambda x)$ preserves both the equation and its mass. Using Strichartz estimates in Lorentz spaces, Aloui and Tayachi~\cite{AT21} established an $L^2$ local theory for \eqref{eq:inls}.

Before turning to the global dynamics of \eqref{eq:inls}, we briefly recall some results for the classical mass-critical nonlinear Schr\"odinger equation. In the focusing case, the equation takes the form
\begin{equation}
	i\partial_tu+\Delta u+|u|^{4/d}u=0.\notag
\end{equation}
By developing long-time Strichartz estimates, Dodson~\cite{D15} established global well-posedness and scattering in all dimensions $d\ge1$ for initial data with mass strictly below that of the ground state. For corresponding results in the defocusing case, we refer to \cite{TVZ07,KTV09,KVZ08,D12,D16a,D16b}.

At the ground-state mass, the situation is fundamentally different, as the equation admits both solitary waves and finite-time blowup solutions. Merle~\cite{M93} classified the finite-time minimal-mass blowup solutions. For forward-global nonscattering solutions, Killip, Li, Visan and Zhang~\cite{KLVZ09} established a rigidity result for radial $H^1$ data in dimensions $d\ge4$. Subsequently, Li and Zhang~\cite{LZ12} extended this result to radial $H^1$ data in dimensions $d=2,3$ and to certain splitting-spherically symmetric data in dimensions $d\ge4$. More recently, Dodson~\cite{D24,D23} completed the classification in the $L^2$ setting without symmetry assumptions for all dimensions $1\le d\le15$: every $L^2$  solution at the ground-state mass that fails to scatter in at least one time direction is, up to the symmetries of the equation, either a ground-state solitary wave or its pseudoconformal transform.

These results provide a natural motivation for studying the corresponding threshold dynamics of the inhomogeneous equation \eqref{eq:inls}. As in the classical case, the behavior of solutions is closely related to the ground state. Let $Q$ be the unique positive solution of
\begin{equation}
	\Delta Q+|x|^{-b}Q^{\frac{4-2b}{d}+1}=Q. \notag
\end{equation}
Existence results can be found in \cite{G08,GS08} for $d\ge2$ and in \cite{G10} for $d=1$. Uniqueness was established by Yanagida~\cite{Y91} for $d\ge3$, by Genoud~\cite{G11} for $d=2$, and by Toland~\cite{T84} for $d=1$. The ground state gives the standing-wave solution

\begin{equation}
	u(t,x)=e^{it}Q(x)\notag
\end{equation}
whose critical spacetime norm is infinite in both time directions.  

The mass threshold is characterized by a sharp weighted Gagliardo--Nirenberg inequality due to Genoud~\cite{G12}:

\begin{equation}
	\int_{\mathbb{R} ^d} |x|^{-b}|f(x)|^{\frac{4-2b}{d} +2}dx\le \frac{d+2-b}{d}\left(\frac{ \|f\|_{L^2} }{ \|Q\|_{L^2} }\right)^{\frac{4-2b}{d}}\int _{\mathbb{R} ^d}|\nabla f(x)|^2dx. \label{eq:sharp}
\end{equation}
For an $H^1$ solution, the conserved energy is defined by
\begin{equation} 
	E(f)=\frac12\int_{\R^d}|\nabla f(x)|^2\dd x
	-\frac{d}{2(d+2-b)}\int_{\R^d}|x|^{-b}|f(x)|^{2+\frac{4-2b}{d}}\dd x.\notag
\end{equation}
Combining this identity with \eqref{eq:sharp}, we obtain
\begin{equation}\label{eq:energy-nonnegative}
	E(f)\ge\frac12\left[1-
	\left(\frac{\|f\|_{L_x^2}}{\|Q\|_{L_x^2}}\right)^{\frac{4-2b}{d}}\right]
	\|\nabla f\|_{L_x^2}^2.
\end{equation}

If $M(u_0)<M(Q)$, conservation of mass and energy therefore gives a uniform bound on the kinetic energy, and hence global existence for $H^1$ data~\cite{G12}. Such a bound alone, however, does not establish scattering. By adapting the concentration-compactness/rigidity strategy of Kenig and Merle~\cite{KM06} and long-time Strichartz methods developed by Dodson \cite{D12}, Liu, Miao and Zheng~\cite{LMZ25} proved global well-posedness and scattering for all $L^2$ data satisfying $M(u_0)<M(Q)$ in dimensions $d\ge3$ with  $0<b<\min\{2,d/2\}$. 
On the other hand, finite-time blowup for the focusing mass-critical INLS has also been extensively studied. Dinh~\cite{Di18} established finite-time blowup for negative-energy initial data under radiality or finite-variance assumptions. Subsequently, Cardoso and Farah~\cite{CF22} removed these restrictions, proving finite-time blowup for general negative-energy $H^1$ initial data.

At $M(u_0)=M(Q)$, the coercive bound \eqref{eq:energy-nonnegative} degenerates. The standing wave has zero energy and does not scatter; moreover, pseudoconformal invariance produces explicit solutions with the same mass that blow up in finite time. Combet and Genoud~\cite{CG16} established the corresponding finite-time classification, together with the variational characterization of the zero-energy case. We record the results in the form needed below. 

\begin{proposition}[Zero energy and finite-time classification, {\cite{CG16}}]\label{prop:classification}
	Let $d\ge3$ and $0<b<2$. If $f\in H^1$, $M(f)=M(Q)$, and $E(f)=0$, then
	\begin{equation} 
		f(x)=e^{i\theta}\lambda^{d/2}Q(\lambda x)\notag
	\end{equation}
	for some $\theta\in\R$ and $\lambda>0$.
	Moreover, if a maximal-lifespan $H^1$ solution with mass
	$M(Q)$ has a finite forward lifespan endpoint $T$,
	then it  must be of the form
	\begin{equation}\label{eq:pseudoconformal}
		u(t,x)=e^{i\theta}e^{i\lambda^2/(T-t)}
		e^{-i|x|^2/[4(T-t)]}
		\left(\frac{\lambda}{T-t}\right)^{d/2}
		Q\left(\frac{\lambda x}{T-t}\right),
		\qquad t<T,
	\end{equation}
	for some $\theta\in\mathbb R$ and $\lambda>0$.
 The corresponding classification for solutions blowing up at a finite backward endpoint follows by applying the above result to the time-reversed solution $\overline{u(-t,x)}$.
\end{proposition}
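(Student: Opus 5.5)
The statement has two parts: a variational characterization of zero-energy data at threshold mass, and a classification of minimal-mass finite-time blowup. Both follow Merle's strategy, with the sharp weighted Gagliardo--Nirenberg inequality \eqref{eq:sharp} and the uniqueness of $Q$ \cite{Y91} replacing their homogeneous counterparts.

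\emph{Step 1: zero energy.} If $M(f)=M(Q)$ and $E(f)=0$, then $f\neq0$ attains equality in \eqref{eq:sharp}, so $f$ is an optimizer of the Weinstein-type functional
\[J(f)=\|\nabla f\|_{L^2}^2\|f\|_{L^2}^{\frac{4-2b}{d}}\Big/\int|x|^{-b}|f|^{\frac{4-2b}{d}+2}.\]
\begin{enumerate}
\item Since $\int|\nabla|f||^2\le\int|\nabla f|^2$, the modulus $|f|$ is also an optimizer, and the diamagnetic inequality must be an equality. Hence $f=e^{i\theta}|f|$ for a constant phase: on $\{f\neq0\}$ the phase has zero gradient, and one obtains global constancy after showing $|f|>0$ in the next item.
\item Symmetric decreasing rearrangement leaves the $L^2$ norm unchanged and does not increase the kinetic term. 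Because $|x|^{-b}$ is radially strictly decreasing, the Hardy--Littlewood rearrangement inequality increases the weighted term strictly unless $|f|$ is already radially symmetric decreasing. Equality therefore forces $|f|=|f|^*$.
\item The Euler--Lagrange equation, after rescaling amplitude and space to normalize the multipliers, becomes $\Delta R+|x|^{-b}R^{\frac{4-2b}{d}+1}=R$. The strong maximum principle gives $R>0$, and Yanagida's uniqueness gives $R=Q$.
\end{enumerate}
Since the mass is fixed, only the $L^2$-scaling $\lambda^{d/2}Q(\lambda x)$ and the phase remain.

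\emph{Step 2: compactness at blowup.} Let $u$ have mass $M(Q)$ and blow up at $T<\infty$, so $\|\nabla u(t)\|_2\to\infty$ as $t\to T$ (the blowup alternative in $H^1$). Set $\rho_n=\|\nabla Q\|/\|\nabla u(t_n)\|$ and $v_n=\rho_n^{d/2}u(t_n,\rho_n x)$. Then $\|\nabla v_n\|=\|\nabla Q\|$ and $E(v_n)=\rho_n^2E(u_0)\to0$.

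Unlike the homogeneous case, there is no translation symmetry: the weight $|x|^{-b}$ pins concentration at the origin. This actually helps. Mass cannot escape to infinity, because the weighted term is small for profiles living far away while $\int|x|^{-b}|v_n|^{p}$ stays bounded below. I would use a profile decomposition, or simply weak convergence together with compactness of the weighted term (Sobolev plus decay of $|x|^{-b}$ for $b>0$). This gives a weak limit $v$ with $\int|x|^{-b}|v|^p>0$, $M(v)\le M(Q)$ and $E(v)\le0$. By \eqref{eq:energy-nonnegative} and the sharp inequality, this forces $M(v)=M(Q)$, so the convergence is strong. Step 1 then gives $v=e^{i\theta}Q_\lambda$.

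In particular, $|u(t)|^2\rightharpoonup M(Q)\delta_0$ as $t\to T$, with concentration at the origin itself.

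\emph{Step 3: rigidity, the main obstacle.} Following Merle, I would show that $u$ has finite variance and compute the variance directly. Take a truncated $\phi_R(x)$ approximating $|x|^2$ and use the virial identity. For a real $\psi$, the inequality $E(e^{is\psi}u)\ge0$ (from \eqref{eq:energy-nonnegative} at threshold mass) yields the Cauchy--Schwarz-type bound
\[\Big|\Ima\int\bar u\nabla\psi\cdot\nabla u\Big|\le\sqrt{2E(u)}\Big(\int|\nabla\psi|^2|u|^2\Big)^{1/2}.\]
Combined with the concentration at $0$, this gives $\big|\tfrac{d}{dt}\int\phi_R|u|^2\big|\lesssim\sqrt{E}\,(\int\phi_R|u|^2)^{1/2}$. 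Integrating from $t$ to $T$, where the localized variance tends to $0$, gives $\int\phi_R|u(t)|^2\lesssim E\,(T-t)^2$ uniformly in $R$. Hence $xu\in L^2$.

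Now apply the pseudoconformal transformation, which the weight $|x|^{-b}$ respects in the mass-critical case because the exponent is exactly $\frac{4-2b}{d}$. The transformed solution $w$ has mass $M(Q)$. Its energy equals $\frac18\|(x+2i(T-t)\nabla)u\|^2$ minus the weighted term, and by the bound above it tends to $0$. Since the energy of $w$ is conserved and approaches $0$, it must vanish. Step 1 then shows that $w$ is a rescaled $e^{it}Q$, and undoing the transform yields \eqref{eq:pseudoconformal}.

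The delicate point is justifying the uniform-in-$R$ variance bound. This needs tightness of $|u|^2$ near the origin as $t\to T$, which Step 2 provides, together with the fact that the energy of every phase-modulated function is nonnegative at threshold mass.
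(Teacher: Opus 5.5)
Your outline is correct, and it is essentially the Merle-type argument (in the Hmidi--Keraani/Banica form) underlying the result that the paper imports without proof from \cite{CG16} (see also \cite{CC22}): the sharp inequality \eqref{eq:sharp} plus uniqueness of $Q$ for the zero-energy case, compactness of the weighted term pinning the concentration at the origin, the phase--energy inequality giving $\|xu(t)\|_{L^2}^2\lesssim E(u)(T-t)^2$, and vanishing of the pseudoconformal energy. Three small repairs are needed: the uniform-in-$R$ differential inequality comes from choosing $\phi_R$ with $|\nabla\phi_R|^2\lesssim\phi_R$ (the concentration is used only for the endpoint value), the conserved pseudoconformal quantity is built from $x+2i(t-T)\nabla$ rather than $x+2i(T-t)\nabla$, and its limit as $t\to T$ should be computed from the form $\frac18V(t)+\frac{T-t}{8}V'(t)+(T-t)^2E(u)$ with $V(t)=\|xu(t)\|_{L^2}^2$, because its kinetic and potential parts do not tend to zero separately.
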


The finite-time classification of Combet and Genoud requires neither radiality nor a finite-variance assumption. Subsequently, Campos and Cardoso~\cite{CC22} established $L^2$ mass concentration for finite-time blowup solutions and provided an alternative proof of the minimal-mass blowup classification, following the compactness approach of Hmidi and Keraani~\cite{HK05}. 

The finite-time classification does not, by itself, resolve the rigidity problem for forward-global nonscattering solutions at the ground-state mass. For initial data with finite variance, the pseudoconformal transformation allows this problem to be reduced to the finite-time classification, as shown in Proposition~\ref{prop:finite-variance} below. For general $H^1$ data, however, such a reduction is not directly available, since the pseudoconformal transformation need not preserve $H^1$ regularity without the finite-variance assumption. Our main theorem establishes the desired rigidity for $\frac12<b<\min\{2,d/2\}$ without assuming either radial symmetry or finite variance.

We now state our main results at the threshold $M(u_0)=M(Q)$.
 
\begin{theorem}[Rigidity of forward-global threshold solutions]\label{thm:main}
	Let   $d\ge3,  \frac12<b<\min\left\{2,\frac d2\right\},$ 
 and let $u\in C([0,\infty);H^1(\R^d))$ be a strong solution of \eqref{eq:inls} with $M(u)=M(Q)$. If $u$ fails to scatter forward in $L^2$, then there exist constants $\theta\in\R$ and $\lambda>0$ such that
	\begin{equation}\label{eq:soliton}
		u(t,x)=e^{i\theta}e^{i\lambda^2t}\lambda^{d/2}Q(\lambda x)
		\qquad(t\ge0).
	\end{equation}
\end{theorem}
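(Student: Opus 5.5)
The plan is to run the Killip--Li--Visan--Zhang / Dodson rigidity scheme, using the inhomogeneity $|x|^{-b}$ to remove translations, and to close with a localized virial argument combined with Proposition~\ref{prop:classification}.

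\textbf{Step 1: compactness modulo scaling.} Suppose $u$ fails to scatter forward. By the sub-threshold scattering theorem of Liu--Miao--Zheng and a profile decomposition adapted to the Lorentz-space Strichartz theory, I would show that $u$ is almost periodic modulo symmetries on $[0,\infty)$. That is, there is a frequency scale $N(t)>0$ such that
\[
\{N(t)^{-d/2}u(t,x/N(t)) : t\ge0\}
\]
is precompact in $L^2$. Translation parameters can be set to zero for the following reason. The weight $|x|^{-b}$ breaks translation invariance, so any profile whose spatial center escapes to infinity relative to its scale evolves asymptotically by the linear flow and scatters. Such profiles therefore cannot carry the threshold mass.

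\textbf{Step 2: zero energy forces the soliton.} Since $u\in H^1$ and the energy is conserved, I would first show that $N(t)$ is bounded below. If instead $N(t_n)\to0$, rescaling would make $E(u)$ scale to zero, and together with compactness this forces $E(u)=0$. Proposition~\ref{prop:classification} would then identify $u(t_n)$ as a rescaled $Q$. Uniqueness for the Cauchy problem then gives $u(t)=e^{i\theta}e^{i\lambda^2 t}\lambda^{d/2}Q(\lambda x)$, which is \eqref{eq:soliton}. The same reduction applies whenever I can produce times $t_n$ with $E(u(t_n))\to 0$ in the rescaled sense. So the goal becomes proving $E(u)=0$.

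\textbf{Step 3: localized virial.} Set $M(t)=\int \psi_R(x)\,\Ima(\bar u\nabla u)\cdot x\,dx$ with a truncation at radius $R$. Its time derivative is
\[
8E(u) + \text{error terms supported in } \{|x|\gtrsim R\}.
\]
The nonlinear part of the error carries the factor $|x|^{-b}\le R^{-b}$. By the Gagliardo--Nirenberg inequality \eqref{eq:sharp}, its contribution is small provided the kinetic energy outside the ball is controlled. Integrating over $[0,T]$ and using $|M(t)|\lesssim R\|u\|_{L^2}\|\nabla u\|_{L^2}$ would yield $8E(u)T\lesssim R\sup_t\|\nabla u\|_{L^2}+o(T)$, and hence $E(u)=0$.

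\textbf{Main obstacle.} The difficulty is that $\|\nabla u(t)\|_{L^2}$ is not known to be bounded, and the exterior kinetic energy $\int_{|x|>R}|\nabla u|^2$ must be controlled without radial symmetry. To handle this I would decompose $u$ into incoming and outgoing pieces using a nonradial analogue of the projections $P^{\pm}$: phase-space cutoffs according to the sign of $x\cdot\xi$ in the region $|x|\gtrsim R$. The outgoing part, evolved forward, and the incoming part, evolved backward, then have good dispersive bounds away from the origin. Combining these bounds with the Duhamel formula, in which the nonlinearity gains decay $|x|^{-b}$, I would prove an exterior frequency-decay estimate of the form
\[
\|P_{\ge N}\,u\|_{L^2(|x|>R)}\lesssim N^{-s}
\]
for some $s>1$. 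I expect the restriction $b>\tfrac12$ to be exactly what makes the Duhamel integral converge and produce enough decay: the gain in $|x|$ must beat a half-derivative loss. Summing dyadically then bounds the exterior kinetic energy uniformly in time, which closes the virial argument of Step 3.
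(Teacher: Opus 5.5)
Your overall architecture matches the paper: compactness modulo scaling, a nonradial incoming/outgoing exterior decay estimate, a localized virial argument forcing $E(u)=0$, and then Proposition~\ref{prop:classification}. However, the virial step as you set it up does not close. Step~3 needs $\sup_{t\ge0}\|\nabla u(t)\|_{L^2}<\infty$ to bound $M(t)$, and this is exactly what is unavailable at the threshold. When $M(u)=M(Q)$, \eqref{eq:energy-nonnegative} gives no coercivity. The exterior frequency decay cannot supply the bound either, because it says nothing about kinetic energy near the origin, which a priori can grow with $N(t)$. Exterior decay controls the error terms in $M'(t)$, not $M(t)$ itself, so ``summing dyadically closes Step~3'' is not correct. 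The missing idea is the phase--energy inequality (Proposition~\ref{prop:banica}). Since $E(e^{isa}f)\ge0$ for all $s\in\R$ at mass $M(Q)$, the discriminant of this quadratic in $s$ gives $|\int\nabla a\cdot\Ima(\bar f\nabla f)\dd x|\le\sqrt{2E(f)}\,\|\,|\nabla a|f\|_{L^2}$. Hence $|V_R'(t)|\lesssim R\sqrt{E(u)M(Q)}$, with no kinetic energy at all. Alternatively, integrating $V_R''$ twice and using $|V_R|\lesssim R^2M(Q)$ leaves only $V_R'(0)$, which involves just $\|\nabla u_0\|_{L^2}$.

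A second gap: running the argument at a fixed $R$ requires the mass and kinetic tails outside a fixed ball to be small uniformly in $t$, i.e.\ $\inf_tN(t)>0$, and Step~2 does not establish this. With $v_n=N(t_n)^{-d/2}u(t_n,x/N(t_n))$ one has $E(v_n)=N(t_n)^{-2}E(u)$, which blows up, rather than vanishes, when $N(t_n)\to0$. Reading it instead as $E(u)=N(t_n)^2E(v_n)\to0$ would require $\sup_n\|\nabla v_n\|_{L^2}<\infty$, i.e.\ $\|\nabla u(t_n)\|_{L^2}\lesssim N(t_n)$, which $L^2$-precompactness does not give. The paper needs no lower bound on $N(t)$. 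Local constancy gives $\sqrt{1+N(t)^{-2}}\le C_u\sqrt{1+t}$, and kinetic localization is proved at the moving radius $C(\delta)\sqrt{1+N(t)^{-2}}$ (Theorem~\ref{thm:kinetic}). Taking $R_T=R\sqrt{1+T}$ then gives $8E(u)T\le V_{R_T}'(T)-V_{R_T}'(0)\lesssim R\sqrt{1+T}\sqrt{E(u)M(Q)}$, which is impossible for large $T$.

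Two smaller points. Your exterior bound with $s>1$, uniformly for $t\ge0$, is false for general $H^1$ data: at $t=0$ it demands more than $H^1$ regularity of $u_0$ away from the origin. The provable form \eqref{eq:exterior} carries the free term $C\|P_\nu u_0\|_{L^2}$. Smallness, not mere boundedness, of the exterior kinetic energy then comes from combining it with the almost-periodic mass tail, as in Lemma~\ref{lem:kinetic-estimate}. Also, your sketch omits the part of the Duhamel source near the origin, where $|x|^{-b}$ is singular and provides no decay. The paper treats it by integrating by parts in time, which produces a divergence-form commutator handled by nonstationary phase (Lemma~\ref{lem:inner}).
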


\begin{remark}
	The restriction $b>1/2$ ensures the convergence of the time integral in \eqref{eq:outer-smallb}. 
\end{remark}

\begin{remark}
	It is natural to ask whether Theorem~\ref{thm:main} remains valid for initial data merely in $L^2(\R^d)$. For the classical mass-critical NLS, Dodson~\cite{D24,D23} established $L^2$ rigidity in dimensions $1\le d\le15$ by developing long-time Strichartz estimates for solutions close to the ground state. His approach relies in part on the smoothness and rapid decay of the ground state. In contrast, the ground state of \eqref{eq:inls} lacks the same high-order regularity due to the singular coefficient $|x|^{-b}$. Consequently, this approach cannot be directly applied to the inhomogeneous equation, and removing the $H^1$ assumption remains an interesting open problem.
\end{remark}

 Since both the standing waves described by \eqref{eq:soliton} and the finite-time blowup solutions in \eqref{eq:pseudoconformal} are radial, their classification immediately leads to a scattering result for nonradial threshold data.

\begin{corollary}[Scattering for nonradial data]\label{cor:nonradial}
	Let $d\ge3$, $\frac12<b<\min\{2,d/2\}$, and suppose that $u_0\in H^1(\R^d)$ satisfies $M(u_0)=M(Q)$. If $u_0$ is not radial, then the corresponding maximal-lifespan solution is global in both time directions, and there exist $u_\pm\in H^1(\R^d)$ such that
	\begin{equation}\label{eq:H1scattering}
		\lim_{t\to\pm\infty}\|u(t)-e^{it\Delta}u_\pm\|_{H_x^1}=0.
	\end{equation}
\end{corollary}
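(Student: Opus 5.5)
The plan is to deduce Corollary~\ref{cor:nonradial} from Theorem~\ref{thm:main} and Proposition~\ref{prop:classification}. Everything rests on one observation: every exceptional solution in these results is radial at every time.

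Let $u$ be the maximal-lifespan solution with data $u_0$, and suppose first that the forward endpoint $T=\sup I$ is finite. Proposition~\ref{prop:classification} then says $u(t)$ has the form \eqref{eq:pseudoconformal} for $t<T$. That profile is a product of radial factors ($Q$ is radial and $e^{-i|x|^2/[4(T-t)]}$ is radial), so $u_0=u(0)$ would be radial, contradicting the hypothesis. Hence $T=\infty$. Applying the same argument to the time-reversed solution $\overline{u(-t,x)}$, whose data $\overline{u_0}$ is still nonradial, shows that $u$ is global backward as well.

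Now $u\in C([0,\infty);H^1)$ is a forward-global strong solution of mass $M(Q)$. If it failed to scatter forward in $L^2$, Theorem~\ref{thm:main} would give \eqref{eq:soliton}, and in particular $u_0=e^{i\theta}\lambda^{d/2}Q(\lambda\cdot)$ would be radial, which is a contradiction. So $u$ scatters forward in $L^2$. Equivalently, by the remark after the blowup definition (via Proposition~\ref{prop:local}), $\|u\|_{L_t^{\frac{2(d+4-2b)}{d}}L_x^{\frac{2(d+4-2b)}{d+2-2b},2}([0,\infty)\times\R^d)}<\infty$. Backward scattering follows symmetrically from the time-reversed solution.

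The only step needing real care is upgrading $L^2$ scattering to $H^1$ scattering \eqref{eq:H1scattering}.
\begin{itemize}
\item I would first split $[0,\infty)$ into finitely many intervals $J_k$ on which the critical norm is small.
\item On each $J_k$, I would run the $H^1$ Strichartz estimates in Lorentz spaces from the local theory, with the weighted nonlinearity handled by H\"older and Hardy-type bounds as in the $H^1$ local theory (Aloui--Tayachi). This bounds $\nabla u$ in $L_t^2L_x^{\frac{2d}{d-2},2}$ and $L_t^\infty L_x^2$.
\item The key point is that the gradient of $|x|^{-b}|u|^{\frac{4-2b}{d}}u$ produces a term with $|x|^{-b-1}$. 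I would control it in the dual norm $L_t^2L_x^{\frac{2d}{d+2},2}$ using the Lorentz-space H\"older inequality, with $|x|^{-b-1}\in L^{\frac{d}{b+1},\infty}$ (valid since $b+1<d$), together with Sobolev or Hardy inequalities. The bound should be linear in $\|\nabla u\|$ times a power of the small critical norm.
\item Summing over the finitely many $J_k$ then gives global finiteness of the $H^1$ Strichartz norms.
\end{itemize}

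With those norms finite, the Duhamel formula shows that $e^{-it\Delta}u(t)$ is Cauchy in $H^1$ as $t\to\infty$. Its limit $u_+$ lies in $H^1$, and it coincides with the $L^2$ scattering state. The case $t\to-\infty$ is identical.

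I expect this persistence-of-regularity estimate for the singular weight to be the only technical obstacle. The rest of the proof is just the radiality observation.
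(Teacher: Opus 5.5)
Your argument is correct and is the paper's proof. Finite lifespan endpoints are excluded because the Combet--Genoud blowup profiles \eqref{eq:pseudoconformal} are radial, forward nonscattering is excluded by Theorem~\ref{thm:main} because the standing waves are radial, and time reversal handles the backward direction.

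The only difference is the $L^2\to H^1$ upgrade, which the paper takes directly from Proposition~\ref{prop:local}(f), so your persistence-of-regularity sketch is unnecessary. If you did want to carry it out, note that in $d=3$ with $b\ge 5/4$, placing the whole derivative on a single factor via Hardy or Sobolev does not close the H\"older exponents; you would need, for example, to distribute the derivative fractionally among the factors.
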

Here radiality is understood with respect to the origin.
This conclusion reflects a structural difference from the
homogeneous mass-critical NLS, whose translation and Galilean
symmetries generate nonradial threshold solutions that do not
scatter. For \eqref{eq:inls}, these symmetries are absent, and
the classification shows that all exceptional threshold
solutions in the above parameter range are radial about
the origin.

Together with the finite-time classification of Proposition \ref{prop:classification}, the rigidity theorem also describes all possible maximal-lifespan threshold dynamics.
\begin{corollary}[Classification of critical-mass energy-space solutions]\label{cor:trichotomy}
	Let $d\ge3$ and $\frac12<b<\min\{2,d/2\}$. Every maximal-lifespan $H^1$ solution of \eqref{eq:inls} with mass $M(Q)$ satisfies exactly one of the following alternatives:
	\begin{enumerate}
		\item It is global and scatters in $H^1$ in both time directions.
		\item It is a global standing wave of the form \eqref{eq:soliton}.
		\item Its lifespan is a half-line with one finite endpoint. It is a pseudoconformal ground state of the form \eqref{eq:pseudoconformal}, or its time reversal, and scatters in $H^1$ at the infinite endpoint.
	\end{enumerate}
\end{corollary}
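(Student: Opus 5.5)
The plan is to combine Theorem~\ref{thm:main}, applied forward and to the time-reversed solution $\overline{u(-t,x)}$, with the finite-time classification of Proposition~\ref{prop:classification}. Let $u$ be a maximal-lifespan $H^1$ solution with $M(u)=M(Q)$ and lifespan $I=(T_-,T_+)$. The proof splits into cases according to whether $T_\pm$ are finite.

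\emph{Case 1: $T_+<\infty$.} Proposition~\ref{prop:classification} forces $u$ to be of the form \eqref{eq:pseudoconformal} on $(-\infty,T)$. The explicit formula is defined and lies in $H^1$ for all $t<T$, so by uniqueness $T_-=-\infty$. To show $H^1$ scattering as $t\to-\infty$, I will use that \eqref{eq:pseudoconformal} is the pseudoconformal transform of the standing wave. Concretely, I will compute directly that $e^{-it\Delta}u(t)$ converges in $H^1$ as $t\to-\infty$. Writing $\tau=T-t\to\infty$, the profile $(\lambda/\tau)^{d/2}Q(\lambda x/\tau)e^{-i|x|^2/4\tau}$ is exactly $e^{i\tau\Delta}$ applied, up to a unimodular constant and the factor $e^{i\lambda^2/\tau}\to1$, to the Fourier-type rescaling of $Q$ (via the identity $e^{it\Delta}=M_tD_tFM_t$). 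The error from the phase $e^{i\lambda^2/\tau}$ tends to $0$, and the $H^1$ convergence uses $xQ,\nabla Q\in L^2$, which follows from the exponential decay of $Q$. Equivalently, I can note that the solution scatters in $L^2$ and upgrade to $H^1$ by the standard $H^1$ persistence once the critical norm on $(-\infty,T-1]$ is finite. Either way this gives alternative (iii). The case $T_->-\infty$ is symmetric under time reversal, and $T_\pm$ cannot both be finite, since a pseudoconformal solution is global on one side.

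\emph{Case 2: $I=\R$.} If $u$ fails to scatter forward, Theorem~\ref{thm:main} gives \eqref{eq:soliton} for $t\ge0$. By uniqueness of $H^1$ solutions, it holds for all $t$, which is alternative (ii). If $u$ fails to scatter backward, apply Theorem~\ref{thm:main} to $\overline{u(-t)}$, which has the same mass; the conjugated, time-reversed standing wave is again of the form \eqref{eq:soliton}. Otherwise $u$ scatters in $L^2$ in both directions. The critical spacetime norm is then finite on $\R$, and the standard upgrade gives $H^1$ scattering: Strichartz estimates applied to $\nabla u$, with the nonlinearity's gradient handled as in the $H^1$ local theory by splitting the time line into finitely many intervals of small critical norm. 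This gives alternative (i).

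Finally, the alternatives are mutually exclusive. Standing waves do not scatter. Pseudoconformal solutions are not global. Scattering solutions are global and have finite critical norm.

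The only delicate step is the $H^1$ (rather than $L^2$) scattering upgrade, both in Case 2 and at the infinite endpoint in Case 1. This requires an $H^1$ persistence-of-regularity estimate controlled by the $L^2$-critical norm for the singular nonlinearity $|x|^{-b}|u|^{\frac{4-2b}{d}}u$. I would treat it as in the local theory recalled in Proposition~\ref{prop:local}, using Lorentz-space Strichartz estimates and Hardy's inequality for the $\nabla|x|^{-b}$ term, which is valid since $b<d/2$.
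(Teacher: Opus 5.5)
Your proposal is correct and follows the paper's proof: the same case split on the lifespan endpoints, Theorem~\ref{thm:main} applied to $u$ and to $\overline{u(-t,x)}$ when the solution is global, and Proposition~\ref{prop:classification} plus uniqueness when an endpoint is finite. For scattering at the infinite endpoint the paper uses your second option: by scaling, the integrand of the critical norm is $(\lambda/(T-t))^{2}\|Q\|^{\frac{2(d+4-2b)}{d}}_{L^{\frac{2(d+4-2b)}{d+2-2b},2}}$, hence integrable on $(-\infty,t_0]$, and Proposition~\ref{prop:local}(f) then supplies the $H^1$ upgrade you flag as delicate (in your lens-transform variant the relevant flow is $e^{-i\tau\Delta}$ acting on $e^{i|x|^2/(4\tau)}$ times a fixed Fourier rescaling of $Q$, not exactly $e^{i\tau\Delta}$ of a fixed function, but your use of $xQ,\nabla Q\in L^2$ handles exactly that chirp).
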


For $0<b\le\frac12$, we can obtain analogous rigidity and scattering results to those in Theorem~\ref{thm:main} under the additional assumption of finite variance, by exploiting the pseudoconformal transformation. In fact, this approach applies throughout the full parameter range
\begin{equation}\label{eq:full-range}
	d\ge3,\qquad 0<b<\min\left\{2,\frac d2\right\}.
\end{equation}
To state the results, we introduce the finite-variance space
\begin{equation}
	\Sigma(\R^d)=\{f\in H^1(\R^d):xf\in L^2(\R^d;\C^d)\},
	\qquad \|f\|_\Sigma=\|f\|_{H_x^1}+\|xf\|_{L_x^2}.\notag
\end{equation}

\begin{proposition}[Finite-variance rigidity throughout the full range]
	\label{prop:finite-variance}
	Assume \eqref{eq:full-range}, and let $u\in C([0,\infty);H^1(\R^d))$ be a strong solution of \eqref{eq:inls} with $u_0\in\Sigma$ and $M(u_0)=M(Q)$. Then either $u$ scatters forward in $H^1$ or it is a standing wave of the form \eqref{eq:soliton}. In particular, every such solution that fails to scatter forward in $L^2$ is a standing wave.  
\end{proposition}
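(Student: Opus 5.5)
We use the pseudoconformal transformation to turn forward-global nonscattering behavior into finite-time blowup, and then apply the classification of Proposition~\ref{prop:classification}.

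\textbf{Step 1: the transform.} Given $u$ on $[0,\infty)$ with $u_0\in\Sigma$, define for $0\le s<1$
\begin{equation*}
v(s,y)=\frac{1}{(1-s)^{d/2}}\,e^{-i|y|^2/[4(1-s)]}\,u\Bigl(\frac{s}{1-s},\frac{y}{1-s}\Bigr).
\end{equation*}
\begin{itemize}[leftmargin=2.4em]
\item A direct computation shows that $v$ solves \eqref{eq:inls}. The only point to check is the weight: since $|y/(1-s)|^{-b}=(1-s)^{b}|y|^{-b}$ and the power is $\frac{4-2b}{d}$, the factors of $(1-s)$ balance exactly. This is precisely the mass-critical exponent.
\item $v(0)=e^{-i|y|^2/4}u_0\in\Sigma$, so $M(v)=M(Q)$.
\item Propagation of $\Sigma$ regularity: for $H^1$ solutions of \eqref{eq:inls}, finite variance persists on the lifespan. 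This follows from the standard virial computation $\frac{d}{dt}\int|x|^2|u|^2=4\Ima\int \bar u\,x\cdot\nabla u$, together with $H^1$ bounds on compact intervals. Hence $v\in C([0,1);\Sigma)$.
\item Spacetime norms are preserved. The critical Lorentz norm of $v$ over $[0,s_1)$ equals that of $u$ over $[0,s_1/(1-s_1))$, since the change of variables is an exact symmetry of the mass-critical Strichartz pair. We verify this using the scaling weights for the exponents $q=\frac{2(d+4-2b)}{d}$ and $r=\frac{2(d+4-2b)}{d+2-2b}$, which satisfy $2/q+d/r=d/2$.
\end{itemize}

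\textbf{Step 2: the nonscattering alternative.} Suppose $u$ fails to scatter forward in $L^2$. Then $\|u\|$ in the critical norm on $[0,\infty)$ is infinite, so $\|v\|$ in the critical norm on $[0,1)$ is infinite. By the local theory (Proposition~\ref{prop:local}, the blowup criterion), the maximal $H^1$ solution $v$ cannot be continued past $s=1$. Its forward lifespan endpoint is therefore finite, $T\le1$. If $T<1$, then $u$ itself would blow up at the finite time $T/(1-T)$, contradicting forward globality; hence $T=1$.

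Proposition~\ref{prop:classification} now gives $v$ in the form \eqref{eq:pseudoconformal} with $T=1$. Inverting the transform, with $t=s/(1-s)$ and $x=y/(1-s)$, shows that
\begin{itemize}[leftmargin=2.4em]
\item the chirp $e^{-i|x|^2/[4(T-t)]}$ cancels exactly,
\item the phase $e^{i\lambda^2/(1-s)}=e^{i\lambda^2}e^{i\lambda^2 t}$,
\item the profile becomes $\lambda^{d/2}Q(\lambda x)$.
\end{itemize}
Thus $u$ has the form \eqref{eq:soliton} with phase $\theta+\lambda^2$. This establishes the "in particular" clause.

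\textbf{Step 3: the scattering alternative and upgrade to $H^1$.} If the critical norm of $u$ on $[0,\infty)$ is finite, $u$ scatters in $L^2$, and we must upgrade this to $H^1$. Finiteness of the critical norm lets us split $[0,\infty)$ into finitely many intervals on which this norm is small. On each interval, Strichartz estimates for $\nabla u$ in Lorentz spaces apply. The nonlinearity term $\nabla(|x|^{-b}|u|^{\frac{4-2b}{d}}u)$ is handled by the weighted Hardy/Lorentz-space estimates used in the $H^1$ local theory. This yields a bound on $\nabla u$ in $S^0$ on $[0,\infty)$, and the standard Cauchy argument for $e^{-it\Delta}u(t)$ in $H^1$ then gives scattering.

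The main obstacle is this last step. It requires a stability-type $H^1$ persistence estimate in which the $H^1$ Strichartz norm is controlled by the critical $L^2$-level norm. The term with derivative falling on $|x|^{-b}$ behaves like $|x|^{-b-1}$ and must be absorbed via Hardy's inequality; this is where $b<\min\{2,d/2\}$ is used. I would first try to reduce it to the corresponding estimate already underlying the $H^1$ theory in \cite{AT21,CF22}.

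A secondary technical point is the rigorous justification that $v$ is a strong solution in the sense of Definition~\ref{def:solution}. This is standard by density, approximating with smooth data and using continuous dependence.
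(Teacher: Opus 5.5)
Your proposal is correct and follows essentially the paper's route: pseudoconformal compactification of $[0,\infty)$ to $[0,1)$, using persistence of finite variance and the exact identity between the critical norms. This is followed by the Combet--Genoud finite-time classification and inversion of the transform, which gives $e^{i(\theta+\lambda^2)}e^{i\lambda^2 t}\lambda^{d/2}Q(\lambda x)$.

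The differences are minor. You split on whether the critical norm of $u$ is finite, while the paper splits on whether $v$ extends past $s=1$. More importantly, the $H^1$ upgrade you flag as the main obstacle needs no new estimate: it is exactly Proposition~\ref{prop:local}(f), and in the paper's formulation it is absorbed into the last assertion of Lemma~\ref{lem:pc-compactification}, so your Step~3 reduces to a citation.
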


The proof of Proposition~\ref{prop:finite-variance}, given in Subsection~\ref{subsec:finite-variance}, uses pseudoconformal invariance and the finite-time classification  results of Proposition \ref{prop:classification}.  Combining this finite-variance result with Corollary~\ref{cor:nonradial}, we obtain the following criterion. 
\begin{corollary}[Combined nonradial scattering criterion]
	\label{cor:combined-scattering}
	Assume \eqref{eq:full-range}, and let $u_0\in H^1(\R^d)$ satisfy $M(u_0)=M(Q)$. Suppose that $u_0$ is not radial. If
	\begin{equation} 
		b>\frac12\qquad\text{or}\qquad xu_0\in L^2,\notag
	\end{equation}
	then the corresponding maximal-lifespan solution is global in both time directions and scatters in $H^1$ in the sense of \eqref{eq:H1scattering}.  
\end{corollary}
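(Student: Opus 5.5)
The case $b>\frac12$ is exactly Corollary~\ref{cor:nonradial}, so I will only treat the case $xu_0\in L^2$, i.e.\ $u_0\in\Sigma$, and use Proposition~\ref{prop:finite-variance} in place of Theorem~\ref{thm:main}. Let $u$ be the maximal-lifespan solution with lifespan $I\ni 0$. The argument rests on one observation: every exceptional object in the threshold classification is radial about the origin. The standing waves \eqref{eq:soliton} are radial because $Q$ is radial. The pseudoconformal solutions \eqref{eq:pseudoconformal} are radial at every time, since the phase $e^{-i|x|^2/[4(T-t)]}$ depends only on $|x|$. The flow of \eqref{eq:inls} commutes with rotations, and by uniqueness of strong solutions a solution is radial at one time if and only if it is radial at all times. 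Hence if $u_0$ is not radial, $u(t)$ is not radial for any $t\in I$.

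\emph{Step 1: global existence.} Suppose $\sup I=T<\infty$. Proposition~\ref{prop:classification} then says $u$ has the form \eqref{eq:pseudoconformal}, so $u(0)$ is radial. This is a contradiction, so $\sup I=\infty$. Applying the same reasoning to the time-reversed solution $\overline{u(-t,x)}$, which is nonradial exactly when $u_0$ is, gives $\inf I=-\infty$. This step needs no finite-variance or $b$ hypothesis.

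\emph{Step 2: forward scattering.} Now $u\in C([0,\infty);H^1)$ is a strong solution with $u_0\in\Sigma$ and $M(u_0)=M(Q)$. By Proposition~\ref{prop:finite-variance}, $u$ either scatters forward in $H^1$ or is a standing wave \eqref{eq:soliton}. The second alternative is radial and is therefore excluded, so $u$ scatters forward in $H^1$.

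\emph{Step 3: backward scattering.} Set $v(t,x)=\overline{u(-t,x)}$. Then $v$ solves \eqref{eq:inls}, and it is forward-global by Step 1. Its data $\bar u_0$ lies in $\Sigma$, has mass $M(Q)$, and is nonradial. Applying Step 2 to $v$ gives some $v_+\in H^1$ with $\|v(t)-e^{it\Delta}v_+\|_{H^1}\to0$ as $t\to\infty$. Conjugating, and using $\overline{e^{it\Delta}f}=e^{-it\Delta}\bar f$, gives backward scattering of $u$ with $u_-=\overline{v_+}$. This establishes \eqref{eq:H1scattering} in both directions.

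The only delicate point is the assertion that nonradiality persists in time. It follows from the local uniqueness in Proposition~\ref{prop:local} together with the rotation invariance of \eqref{eq:inls}, since $|x|^{-b}$ is radial. It must also be checked that Proposition~\ref{prop:finite-variance} gives scattering in $H^1$ rather than only in $L^2$; it states $H^1$ scattering explicitly. Everything else in the argument is bookkeeping.
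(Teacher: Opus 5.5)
Your proof is correct and follows the paper's argument. You exclude finite lifespan endpoints via the Combet--Genoud classification, then invoke Corollary~\ref{cor:nonradial} for $b>\frac12$ and Proposition~\ref{prop:finite-variance} for $u$ and $\overline{u(-t,x)}$ in the finite-variance case; the persistence-of-nonradiality remark is harmless but unnecessary, since both exceptional alternatives give explicitly radial data at $t=0$ itself.
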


\subsection{Outline of the Proof of the Main Theorem}\label{subsec:method-comparison}

We now outline the proof of Theorem~\ref{thm:main}, following the general strategy of Killip, Li, Visan and Zhang~\cite{KLVZ09}. By the sharp Gagliardo--Nirenberg inequality \eqref{eq:sharp}, we have $E(u)\ge0$ whenever $M(u)=M(Q)$. Moreover, Proposition~\ref{prop:classification} shows that $E(u)=0$ forces $u$ to be a standing wave. Thus, it suffices to rule out the possibility $E(u)>0$. Formally, the virial identity gives
\[
\frac{d^2}{dt^2}\int_{\R^d}|x|^2|u(t,x)|^2\dd x=16E(u).
\]
Since we do not assume finite variance, we instead consider a truncated virial functional $V_R(t)$, for which
\[
\begin{aligned}
	|V_R''(t)-16E(u)|
	\lesssim{}& R^{-2}M(u)
	+\int_{|x|>R}|\nabla u(t,x)|^2\dd x +\int_{|x|>R}|x|^{-b}
	|u(t,x)|^{2+\frac{4-2b}{d}}\dd x.
\end{aligned}
\]

If the exterior kinetic and potential-energy tails can be made sufficiently small on $[0,T]$ with a cutoff radius $R$ of order $\sqrt{1+T}$, then $V_R''(t)$ remains uniformly positive. Integrating in time would force $V_R'(T)-V_R'(0)$ to grow linearly in $T$. On the other hand, the phase--energy inequality in Proposition~\ref{prop:banica} gives
\[
|V_R'(t)|\lesssim R\sqrt{E(u)M(u)}.
\]
Since $R\sim\sqrt{1+T}$, these two bounds are incompatible as $T\to\infty$.

To obtain this spatial localization, we first establish a compactness property of the solution. Let $u\in C([0,\infty);H^1(\R^d))$ be a forward-global solution with $M(u)=M(Q)$ that fails to scatter forward in $L^2$. Following the concentration-compactness framework of Kenig and Merle~\cite{KM06} and adapting the argument in \cite[Section~3.1]{LMZ25}, we show in Section~\ref{sec:compactness} that $u$ is almost periodic modulo scaling. This provides the spatial mass localization needed for the subsequent kinetic-energy localization argument. We now recall the precise definition below.

\begin{definition}[Almost periodicity modulo scaling, {\cite{KLVZ09,LMZ25}}]\label{def:AP}
	A solution is said to be almost periodic modulo scaling on $[0,\infty)$ if there exists a function $N(t)>0$ such that
	\begin{equation} 
		\left\{N(t)^{-d/2}u\!\left(t,\frac{x}{N(t)}\right):t\ge0\right\}\notag
	\end{equation}
	is precompact in $L^2$. Equivalently, for every $\eta>0$, there exists $C(\eta)>0$ such that
	\begin{equation}\label{eq:APtails}
		\|u(t)\|_{L_x^2(|x|>C(\eta)/N(t))}
		+\|\widehat u(t)\|_{L_\xi^2(|\xi|>C(\eta)N(t))}<\eta
		\qquad(t\ge0).
	\end{equation}
\end{definition}

The function $N(t)$ represents the characteristic frequency scale of the solution, with $N(t)^{-1}$ corresponding to its spatial scale. While almost periodicity provides spatial localization of the mass, it does not directly yield the kinetic-energy localization required for the localized virial argument. To carry out the rigidity argument, we shall use two standard properties of almost-periodic solutions. 

The first is the local constancy of $N(t)$, which provides control of the spatial scale in the final virial argument.

\begin{proposition}[Local constancy, {\cite{KV13,LMZ25}}]\label{prop:localconstancy}
	For a nonzero almost-periodic solution, the scale function can be chosen to be continuous and satisfies
	\begin{equation}\label{eq:localconstancy}
		N(s)\simeq_u N(t)
		\quad\text{if }s,t\ge0,\quad |s-t|\le\delta_uN(t)^{-2},
	\end{equation}
	for some $\delta_u>0$. Consequently, \eqref{eq:localconstancy} implies
	\begin{equation}\label{eq:virial-scale-control}
		\sqrt{1+N(t)^{-2}}\le C_u\sqrt{1+t}
		\qquad \forall t\ge0.
	\end{equation}
\end{proposition}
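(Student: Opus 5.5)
The plan has two parts. First we prove the local constancy \eqref{eq:localconstancy} by a compactness-and-contradiction argument. Then we derive \eqref{eq:virial-scale-control} from it by an iteration.

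\textbf{Step 1: local constancy.} Suppose \eqref{eq:localconstancy} fails for every $\delta>0$. Then there are times $t_n$ and $s_n$ with $|s_n-t_n|\le \frac1n N(t_n)^{-2}$ and
\[
N(s_n)/N(t_n)\to0 \quad\text{or}\quad N(s_n)/N(t_n)\to\infty .
\]
We rescale by setting
\[
v_n(\tau,x)=N(t_n)^{-d/2}u\bigl(t_n+N(t_n)^{-2}\tau,\,x/N(t_n)\bigr).
\]
Each $v_n$ solves \eqref{eq:inls}, because the equation is invariant under mass-critical scaling. By precompactness, $v_n(0)\to v_0$ in $L^2$ along a subsequence. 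Moreover $v_0\ne0$, since the mass is conserved and equals $M(Q)>0$.

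By the $L^2$ local theory (Proposition~\ref{prop:local}) and its stability theory, the solution $v$ with data $v_0$ exists on some interval $[-\tau_0,\tau_0]$. On that interval $v_n\to v$ in $C_tL^2_x$, so $v_n(\tau_n)\to v_0$ for $\tau_n=N(t_n)^2(s_n-t_n)\to0$. Hence $\{v_n(\tau_n)\}$ stays in a fixed compact subset of $L^2\setminus\{0\}$.

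On the other hand, almost periodicity says that $v_n(\tau_n)$, rescaled by the ratio $N(s_n)/N(t_n)$, is precompact. If that ratio tends to $0$ or to $\infty$, the $L^2$ mass of $v_n(\tau_n)$ would spread to infinite spatial scale or concentrate at infinite frequency. In either case it converges weakly to zero, which contradicts strong convergence to $v_0\neq0$. The continuity of $N$ is then arranged by a standard modification: we redefine $N$ piecewise-linearly on a grid adapted to the local constancy, and this changes $N$ only by bounded factors, which \eqref{eq:APtails} tolerates. The one delicate point is $t_n$ near $0$, where we use one-sided intervals since we only have $s,t\ge0$. I expect this step to be routine, following \cite{KV13}.

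\textbf{Step 2: \eqref{eq:virial-scale-control}.} The estimate is only needed when $N(t)\le1$, since otherwise the left side is at most $\sqrt2$. We chain local constancy backward from $0$. Let $C_0\ge1$ be the constant with $N(s)\ge C_0^{-1}N(t)$ whenever $|s-t|\le\delta N(t)^{-2}$; we may assume $C_0^2\ge 2$ by enlarging $C_0$.

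Set $t_0=0$ and $t_{k+1}=t_k+\delta N(t_k)^{-2}$. Since $N(t_{k+1})\ge C_0^{-1}N(t_k)$, we get
\[
N(t_k)\ge C_0^{-k}N(0),
\]
and also $t_{k+1}-t_k\le \delta C_0^{2k}N(0)^{-2}$.

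Now take $t$ with $N(t)\le1$, and let $k$ be such that $t\in[t_k,t_{k+1}]$. Then $N(t)\ge C_0^{-1}N(t_k)$. For the lower bound on $t$ we need the reverse inequality as well, namely $t_{k+1}-t_k\gtrsim N(t_k)^{-2}$, which holds by the definition of the grid. The constants in these comparisons must not accumulate in $k$. This gives
\[
t\ \ge\ t_k\ \ge\ \delta N(t_{k-1})^{-2}\ \gtrsim\ N(t)^{-2},
\]
except in the case $k=0$, where $N(t)\simeq N(0)$ and a constant depending on $u$ suffices. Therefore $N(t)^{-2}\lesssim_u 1+t$, and so
\[
1+N(t)^{-2}\lesssim_u 1+t,
\]
which is \eqref{eq:virial-scale-control}.

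The main obstacle is the first step: ruling out the degenerate ratios requires quantifying that the rescaled profiles cannot lose compactness over times that are short relative to $N(t_n)^{-2}$, and that is exactly what the $L^2$ well-posedness and stability theory provides.
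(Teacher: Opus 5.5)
Your Step 1 is the standard compactness-and-contradiction argument behind the cited result \cite{KV13}, and it is correct. The paper itself gives no proof; it cites \cite{KV13,LMZ25}. Taking $\delta=1/n$ makes $\tau_n\to0$, so continuous dependence alone gives $v_n(\tau_n)\to v_0$, and you never need to extract a limit time.

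For \eqref{eq:virial-scale-control}, the paper's ``consequently'' refers to a single application of \eqref{eq:localconstancy} with $s=0$:
\begin{itemize}
\item if $t\le\delta_uN(t)^{-2}$, then $N(t)\simeq_uN(0)$, so $N(t)^{-2}\lesssim_uN(0)^{-2}$;
\item otherwise $N(t)^{-2}<t/\delta_u$.
\end{itemize}
In both cases $N(t)^{-2}\lesssim_u1+t$.

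Your grid argument also reaches this conclusion. The final chain $t\ge t_k\ge\delta N(t_{k-1})^{-2}\ge\delta C_0^{-4}N(t)^{-2}$ is right, and the exponential bounds on $N(t_k)$ and $t_{k+1}-t_k$ are not needed.

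However, you never justify that $t_k\to\infty$. You need it both for ``let $k$ be such that $t\in[t_k,t_{k+1}]$'' and for your piecewise-linear continuous modification of $N$. It does not follow from the bounds you record. Two-sided comparability gives only $t_{k+1}-t_k\ge\delta C_0^{-2k}N(0)^{-2}$, which is summable.

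The fix is one line. Suppose $t_k\uparrow t^*<\infty$. Then $N(t_k)^2=\delta/(t_{k+1}-t_k)\to\infty$. But the upper half of \eqref{eq:localconstancy}, applied at $t^*$, gives $N(t_k)\lesssim_uN(t^*)$ for all large $k$, a contradiction.

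With this added, your proof is complete. For the second claim, the direct comparison with $s=0$ is shorter and avoids the issue entirely.
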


The second is a reduced Duhamel formula, which serves as the starting point for the exterior frequency-decay estimates needed to establish kinetic-energy localization.

\begin{proposition}[Forward reduced Duhamel formula, {\cite{KLVZ09,KV13,TVZ08}}]\label{prop:reduced}
	If an $H^1$ strong solution of \eqref{eq:inls} is almost periodic modulo scaling on $[0,\infty)$, then, for every fixed $t\ge0$,
	\begin{equation}\label{eq:reduced}
		u(t)=-i\wlim_{T\to\infty}
		\int_0^T e^{-i\tau\Delta}
		\left(|x|^{-b}|u(t+\tau)|^{\frac{4-2b}{d}}u(t+\tau)\right)\dd\tau
		\quad\text{in }L^2.
	\end{equation}
\end{proposition}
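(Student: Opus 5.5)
We take the forward-in-time Duhamel formula on $[t,t+T]$,
\[
u(t)=e^{-iT\Delta}u(t+T)-i\int_0^T e^{-i\tau\Delta}\bigl(|x|^{-b}|u(t+\tau)|^{\frac{4-2b}{d}}u(t+\tau)\bigr)\dd\tau ,
\]
which follows from Definition~\ref{def:solution} with $t_0=t+T$. The reduced formula \eqref{eq:reduced} is then equivalent to the claim that $e^{-iT\Delta}u(t+T)\rightharpoonup 0$ weakly in $L^2$ as $T\to\infty$. Since $\|u(t+T)\|_{L^2}$ is uniformly bounded by mass conservation, it suffices to test against $\phi$ in a dense class, say $\phi\in C_c^\infty(\R^d)$ with $\widehat\phi$ supported away from $0$ and $\infty$. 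We must therefore show
\[
\langle u(t+T),e^{iT\Delta}\phi\rangle\to0 \qquad (T\to\infty).
\]
Equivalently, every weak subsequential limit of $e^{-iT_n\Delta}u(t+T_n)$ vanishes.

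We argue by contradiction along a sequence $T_n\to\infty$ and use almost periodicity, Definition~\ref{def:AP}. By precompactness there are a profile $\psi\ne0$ and scales $N_n=N(t+T_n)$ such that $N_n^{-d/2}u(t+T_n,x/N_n)\to\psi$ strongly in $L^2$. The pairing becomes $\langle\psi,\, N_n^{-d/2}(e^{iT_n\Delta}\phi)(\cdot/N_n)\rangle+o(1)$, where the rescaled free evolution is $e^{iT_nN_n^2\Delta}$ applied to $\phi_n:=N_n^{-d/2}\phi(\cdot/N_n)$. We split into cases, passing to subsequences.

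\begin{enumerate}
\item If $N_n\to0$ or $N_n\to\infty$, then $\phi_n\rightharpoonup0$ in $L^2$. Unitarity alone does not pass this to the time-evolved function, so we use the frequency side. The Fourier transform of $e^{iT_nN_n^2\Delta}\phi_n$ has modulus $|N_n^{d/2}\widehat\phi(N_n\xi)|$, which concentrates at frequencies $\sim N_n^{-1}$. This concentration escapes every fixed compact frequency set, so the pairing with the fixed $\widehat\psi$ tends to $0$.
\item If $N_n\simeq1$, then $T_nN_n^2\to\infty$ and the dispersive decay $\|e^{is\Delta}\phi_n\|_{L^\infty}\lesssim |s|^{-d/2}$ holds uniformly. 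Pairing against $\psi$ truncated to a large ball (the remainder being small in $L^2$) gives decay.
\end{enumerate}

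The $H^1$ hypothesis is not really needed, beyond ensuring that the solution is a strong solution.

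The main obstacle is rigor in case (ii) and at the interface between the cases: the subsequence choice must be uniform, and $\psi$ must be handled only through its $L^2$ approximation. We follow \cite[Prop.~5.23]{KV13} essentially verbatim, noting that the nonlinearity enters only through the Duhamel identity. Hence the inhomogeneous coefficient $|x|^{-b}$ plays no role here, and the limit holds as a weak limit in $L^2$ as claimed.
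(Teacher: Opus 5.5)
Your proposal is correct and is essentially the standard argument of the cited references; the paper itself gives no proof beyond the citation. The Duhamel identity reduces the claim to $e^{-iT\Delta}u(t+T)\rightharpoonup0$, and almost periodicity together with a subsequence argument handles degenerate scales by frequency mismatch and comparable scales by dispersive decay. The only slip is that no nonzero $\phi\in C_c^\infty(\R^d)$ can have $\widehat\phi$ compactly supported away from the origin. Take $\phi$ Schwartz with $\widehat\phi\in C_c^\infty(\R^d\setminus\{0\})$ instead; this class is still dense in $L^2$ and lies in $L^1$, as your case (ii) requires.
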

  To derive the required exterior frequency-decay estimates,
we develop a nonradial incoming/outgoing decomposition
using smooth angular cutoffs in phase space, inspired by
the approach of Killip, Li, Visan and Zhang~\cite{KLVZ09}.
The key is to estimate the nonlinear forcing separately
near the singularity at the origin and in the exterior region.

In dimensions three and four, the singular coefficient $|x|^{-b}$ prevents us from estimating the interior nonlinear forcing using mass conservation alone, as in the approach of Killip, Li, Visan and Zhang~\cite{KLVZ09}. The main difficulty arises from the singular coefficient $|x|^{-b}$, which prevents us from obtaining the required spatial integrability. Instead, we integrate by parts in time using the equation, which produces a divergence-form commutator and allows derivatives to fall on the propagation kernel rather than the singular coefficient.  See Section~\ref{sec:exterior} for details. 

For the exterior contribution, we exploit the spatial decay of $|x|^{-b}$ along the outgoing propagation region. This additional decay enables us to implement a frequency bootstrap argument. More precisely, assuming an exterior frequency-decay bound of order $\nu^{-s}$ for some $0\le s<1$, we combine the dual Strichartz estimates with the decay of the spatial weight to improve the exponent from $s$ to $s+2\varepsilon$.
 
Starting from $s=0$, which follows from mass conservation, we iterate this improvement finitely many times to obtain, for every fixed $r>0$,
\begin{equation}\label{eq:exterior-intro}
	\sup_{t\ge0}\|\varphi_{>r}P_\nu u(t)\|_{L_x^2}
	\le C_{u,r}\nu^{-1-\varepsilon}+C\|P_\nu u_0\|_{L_x^2},
	\qquad \nu\ge1,
\end{equation}
where
\begin{equation}\label{eq:epsilon}
	\varepsilon=
	\begin{cases}
		\dfrac b2-\dfrac14,&\dfrac12<b\le1,\\[3pt]
		\dfrac b4,&1\le b<2.
	\end{cases}
\end{equation}
The details of the frequency bootstrap argument and the proof of \eqref{eq:exterior-intro} are given in Section~\ref{sec:exterior}.

In Subsection~\ref{Kinetic-energy localization}, we combine \eqref{eq:exterior-intro} with the spatial mass localization \eqref{eq:APtails} and the $H^1$ regularity of the initial data to establish kinetic-energy localization, following the argument of \cite{KLVZ09}. A localized weighted Gagliardo--Nirenberg inequality then yields the corresponding localization of the nonlinear potential energy. These estimates allow us to control the error terms in the truncated virial identity.

Then in Subsection~\ref{sec:virial}, we complete the rigidity argument. Combining the energy localization with the scale bound \eqref{eq:virial-scale-control} and the phase--energy inequality of Banica~\cite{B04}, we obtain the virial contradiction outlined above whenever $E(u)>0$. Consequently, $E(u)=0$, and Proposition~\ref{prop:classification} yields \eqref{eq:soliton}. Unlike the radial argument in \cite[Section~4]{KLVZ09}, our proof does not require a separate analysis of different frequency-scale regimes.

 Finally, we treat the finite-variance case without using the exterior frequency-decay estimates. The pseudoconformal transformation maps the infinite forward time interval to a finite interval. If the transformed solution extends beyond the terminal time, the original solution scatters forward; otherwise, the finite-time blowup classification of Combet and Genoud~\cite{CG16} implies that the original solution is a standing wave. This proves Proposition~\ref{prop:finite-variance}.  

\subsection{Organization of the paper}

Section~\ref{sec:prelim} fixes the notation and recalls Lorentz-space inequalities, Strichartz estimates, and the local and stability theory. Section~\ref{sec:compactness} establishes almost periodicity modulo scaling for forward-global nonscattering solutions at the ground-state mass. In Section~\ref{sec:exterior} we prove the exterior frequency-decay estimate using the nonradial incoming/outgoing decomposition, an interior time integration by parts, and an exterior frequency bootstrap. Section~\ref{sec:kinetic} combines kinetic-energy localization with the localized virial argument to obtain rigidity. Finally, Section~\ref{sec:conclusion} deduces the main classification and scattering results and proves the finite-variance statements by pseudoconformal transformation.

\section{Preliminaries}\label{sec:prelim}

	 In this section, we collect some notation and preliminary results that will be used throughout the paper. We begin by introducing the notation and recalling several Bernstein and mismatch estimates. We then review the definition and basic properties of Lorentz spaces, which are useful for handling the singularity of the inhomogeneous coefficient. Finally, we recall the Strichartz estimates, local well-posedness, and stability results for the Cauchy problem \eqref{eq:inls}.

	\subsection{Some notations}
	We use the standard notation for mixed Lebesgue space-time norms and Sobolev spaces. 
	We write  $A\lesssim  B$ to denote  $A\le CB$ for some  $C>0$. If  $A\lesssim B$ and  $B\lesssim A$, then we write  $A\approx B$.  We write  $A\ll B$   to denote  $A\le cB$ for some  small $c>0$.  If  $C$ depends upon some additional parameters, we will indicate this with subscripts; for example,  $X\lesssim _u Y$ denotes that  $X\le C_u Y$ for some  $C_u$ depending on  $u$.    We use  $O(Y)$ to denote any quantity  $X$  such that  $|X|\lesssim  Y$. We use     $\langle x \rangle $ to denote  $(1+|x|^2)^{\frac{1}{2}}$.        We write $L_t^q L_x^r$ to denote the Banach space with norm
	\[
	\|u\|_{L_t^q L_x^r (\mathbb{R} \times \mathbb{R}^d)} := \left( \int_{\mathbb{R}} \left( \int_{\mathbb{R}^d} |u(t, x)|^r \, dx \right)^{q/r} \, dt \right)^{1/q},
	\]
	with the usual modifications when $q$ or $r$ are equal to infinity, or when the domain $\mathbb{R} \times \mathbb{R}^d$ is replaced by spacetime slab such as $I \times \mathbb{R}^d$. When $q = r$ we abbreviate $L_t^q L_x^q$ as $L_{t,x}^q$.

	Fix a nonincreasing smooth radial function $\varphi$ with
	\[
	0\le\varphi\le1,\qquad
	\varphi(x)=1\ (|x|\le1),\qquad
	\varphi(x)=0\ (|x|\ge25/24).
	\]
	Set
	\begin{equation} 
		\varphi_{\le R}(x)=\varphi(x/R),\qquad
		\varphi_{>R}=1-\varphi_{\le R},\qquad
		\widehat{P_{\le\nu}f}(\xi)=\varphi(\xi/\nu)\widehat f(\xi),\notag
	\end{equation}
	\[
	P_\nu=P_{\le\nu}-P_{\le\nu/2},\qquad P_{>\nu}=1-P_{\le\nu}.
	\]
	All frequency sums are dyadic. A smooth enlarged annular multiplier $\widetilde P_\nu$ is chosen so that $\widetilde P_\nu P_\nu=P_\nu$. The fixed dyadic frequency $\nu$ is distinct from the almost-periodic frequency scale $N(t)$.

 We recall the following Bernstein and mismatch estimates, which will be used in Sections~\ref{sec:exterior} and \ref{sec:kinetic}.
 
 \begin{lemma}[Bernstein estimates, {\cite{KLVZ09}}]\label{lem:bernstein}
 	For $1\le p\le q\le\infty$, $\sigma\ge0$, and $\nu>0$,
 	\begin{align}
 		\|P_\nu f\|_{L_x^q}
 		&\lesssim\nu^{d(1/p-1/q)}\|P_\nu f\|_{L_x^p},\notag\\
 		\||\nabla|^\sigma P_\nu f\|_{L_x^p}
 		&\lesssim\nu^\sigma\|P_\nu f\|_{L_x^p},
 		\qquad
 		\|P_\nu f\|_{L_x^p}
 		\lesssim\nu^{-1}\|\nabla f\|_{L_x^p}.\notag
 	\end{align}
 \end{lemma}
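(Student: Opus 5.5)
The three estimates are standard Littlewood--Paley facts. I would derive them from the convolution representation of $P_\nu$ combined with Young's inequality and scaling. By construction, $\widehat{P_\nu f}(\xi)=\psi(\xi/\nu)\widehat f(\xi)$ with $\psi(\xi)=\varphi(\xi)-\varphi(2\xi)$, a smooth function supported in the annulus $\{1/2\le|\xi|\le 25/24\}$. Thus $P_\nu f=\nu^d\check\psi(\nu\cdot)*f$, and the kernel $\nu^d\check\psi(\nu\cdot)$ is a Schwartz function with $L^r$ norm $\simeq\nu^{d(1-1/r)}$. The only slightly delicate point is that the operators must be applied to $P_\nu f$ rather than $f$. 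To handle this, I will insert the enlarged multiplier $\widetilde P_\nu$, which satisfies $\widetilde P_\nu P_\nu=P_\nu$ and has symbol $\widetilde\psi(\xi/\nu)$ with $\widetilde\psi$ smooth and supported in a slightly larger annulus away from the origin.

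For the first estimate, write $P_\nu f=\widetilde P_\nu P_\nu f=\nu^d\check{\widetilde\psi}(\nu\cdot)*P_\nu f$. Young's inequality with $1+1/q=1/r+1/p$ then gives
\[
\|P_\nu f\|_{L^q}\le\|\nu^d\check{\widetilde\psi}(\nu\cdot)\|_{L^r}\|P_\nu f\|_{L^p}\lesssim\nu^{d(1-1/r)}\|P_\nu f\|_{L^p}=\nu^{d(1/p-1/q)}\|P_\nu f\|_{L^p}.
\]
The condition $p\le q$ guarantees $r\in[1,\infty]$.

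For the second estimate, write $|\nabla|^\sigma P_\nu f=K_\nu*P_\nu f$, where $\widehat{K_\nu}(\xi)=|\xi|^\sigma\widetilde\psi(\xi/\nu)=\nu^\sigma m(\xi/\nu)$ with $m(\eta)=|\eta|^\sigma\widetilde\psi(\eta)$. Since $\widetilde\psi$ vanishes near the origin, $m$ is smooth and compactly supported, so $\check m\in L^1$. Scaling gives $\|K_\nu\|_{L^1}=\nu^\sigma\|\check m\|_{L^1}$, and Young's inequality with $L^1$ finishes the estimate.

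For the third estimate, I will express $P_\nu$ as a frequency-localized inverse gradient. Write $\psi(\xi/\nu)=\sum_j\frac{-i\xi_j}{|\xi|^2}\psi(\xi/\nu)\,(i\xi_j)$, so that $P_\nu f=\sum_j T_j\partial_jf$. The multiplier of $T_j$ is $\nu^{-1}m_j(\xi/\nu)$ with $m_j(\eta)=-i\eta_j|\eta|^{-2}\psi(\eta)$, which is smooth and compactly supported because $\psi$ vanishes near $0$. Hence $\|\check m_j\|_{L^1}<\infty$, and by scaling $\|T_j g\|_{L^p}\lesssim\nu^{-1}\|g\|_{L^p}$ for all $1\le p\le\infty$. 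Summing over $j$ gives $\|P_\nu f\|_{L^p}\lesssim\nu^{-1}\|\nabla f\|_{L^p}$.

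None of these steps is a real obstacle. The only points requiring care are:
\begin{itemize}
\item the multipliers must stay away from $\xi=0$, so that the symbols $|\eta|^\sigma$ and $\eta_j|\eta|^{-2}$ remain smooth; this is exactly what the annular supports of $\psi$ and $\widetilde\psi$ ensure;
\item the implicit constants depend only on $d$, $\sigma$ and the fixed cutoff $\varphi$, which is uniform in $\nu$ by scaling.
\end{itemize}
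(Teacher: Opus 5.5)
Your argument is correct: inserting $\widetilde P_\nu$, writing each operator as convolution with a rescaled Schwartz kernel whose symbol vanishes near the origin, and applying Young's inequality is the standard proof of these three Bernstein estimates. The paper gives no proof of its own and simply cites Killip--Li--Visan--Zhang, where these inequalities are taken as the usual consequences of exactly this convolution-and-scaling argument.
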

 
 We also recall the following mismatch estimates for spatial and frequency localization.
 
 \begin{lemma}[Mismatch estimates, {\cite[Lemmas~2.1--2.3]{KLVZ09}}]\label{lem:mismatch}
 	For every integer $m\ge0$,
 	\begin{align}
 		\|\varphi_{>R}P_{\le\nu}(\varphi_{\le R/2}f)\|_{L_x^p}
 		&\le C_m(\nu R)^{-m}\|f\|_{L_x^p},\notag\\
 		\|\varphi_{>R}\nabla P_{\le\nu}(\varphi_{\le R/2}f)\|_{L_x^p}
 		&\le C_m\nu(\nu R)^{-m}\|f\|_{L_x^p}.\notag
 	\end{align}
 	These estimates also hold for enlarged annular multipliers. If
 	$\max\{\nu,\mu\}\ge4\min\{\nu,\mu\}$, then
 	\begin{align}
 		\|P_\nu\varphi_{<R}P_\mu f\|_{L_x^2}
 		&\le C_m(R\max\{\nu,\mu\})^{-m}\|f\|_{L_x^2},\notag\\
 		\|P_\nu\varphi_{<R}\nabla P_\mu f\|_{L_x^2}
 		&\le C_m\mu(R\max\{\nu,\mu\})^{-m}\|f\|_{L_x^2}.\notag
 	\end{align}
 	The same estimates hold if we replace $\varphi_{\le R}$ by $\varphi_{>R}$.
 \end{lemma}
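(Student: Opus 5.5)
The statement just above is Lemma~\ref{lem:mismatch}, so the plan is to prove the mismatch estimates by the standard kernel-decay argument.

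\textbf{First estimate (spatial mismatch).} Write $P_{\le\nu}$ as convolution with $\nu^d\check\varphi(\nu\cdot)$, a Schwartz kernel. For $|x|\ge R$ and $|y|\le \tfrac{25}{48}R$ we have $|x-y|\gtrsim |x|$ and $|x-y|\gtrsim R$. The operator $f\mapsto \varphi_{>R}P_{\le\nu}(\varphi_{\le R/2}f)$ therefore has kernel
\[
K(x,y)=\varphi_{>R}(x)\,\nu^d\check\varphi(\nu(x-y))\,\varphi_{\le R/2}(y),
\]
which is supported where $|x-y|\gtrsim R$. There the kernel bound gives
\[
|K(x,y)|\le C_M\,\nu^d\langle\nu(x-y)\rangle^{-M}
\lesssim_m (\nu R)^{-m}\,\nu^d\langle\nu(x-y)\rangle^{-d-1},
\]
taking $M=m+d+1$ and using $\langle \nu(x-y)\rangle\gtrsim \nu R$ on the support. (When $\nu R\lesssim1$ the claim is trivial from $L^p$ boundedness.)

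The right-hand side is a convolution kernel with $L^1$ norm $O(1)$, so Young's (Schur's) inequality gives the $L^p$ bound for all $1\le p\le\infty$. The gradient version is identical: $\nabla P_{\le\nu}$ has kernel $\nu^{d+1}(\nabla\check\varphi)(\nu\cdot)$, which produces the extra factor $\nu$. For enlarged annular multipliers only the Schwartz property of the kernel is used, so the same argument applies.

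\textbf{Second estimate (frequency mismatch).} Assume $\nu\ge4\mu$; the other case follows by taking adjoints, since everything is self-adjoint up to conjugation. The composition $P_\nu\varphi_{<R}P_\mu$ has Fourier-side kernel
\[
\psi(\xi/\nu)\,R^d\widehat\varphi(R(\xi-\eta))\,\psi(\eta/\mu).
\]
On its support $|\xi|\ge\nu/2$ and $|\eta|\le 2\mu\le\nu/2\cdot$(const), so $|\xi-\eta|\gtrsim\nu=\max\{\nu,\mu\}$ given the factor-4 separation and the support of $\psi$. By Schwartz decay of $\widehat\varphi$,
\[
R^d|\widehat\varphi(R\zeta)|\lesssim_m (R\nu)^{-m}R^d\langle R\zeta\rangle^{-d-1}
\quad\text{for } |\zeta|\gtrsim\nu.
\]
Schur's test on the frequency side, which is $L^2$-isometric via Plancherel, then gives $C_m(R\max\{\nu,\mu\})^{-m}$. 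The factor $\nabla$ on $P_\mu$ contributes $|\eta|\lesssim\mu$, giving the extra factor $\mu$. For $\varphi_{>R}=1-\varphi_{\le R}$, note that $P_\nu P_\mu=0$ under the separation hypothesis (disjoint supports), so the bound for $\varphi_{>R}$ reduces to the one for $\varphi_{\le R}$.

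\textbf{Main obstacle.} The only delicate point is checking the support separation constants for the particular $\varphi$, which is $1$ on $|x|\le1$ and vanishes beyond $25/24$. Specifically one must verify $|\xi-\eta|\ge c\max\{\nu,\mu\}$ with $c>0$ under $\max\ge4\min$, and that spatial supports $|x|\ge R$ versus $|y|\le\tfrac{25}{48}R$ are separated by $\gtrsim R$. Both are elementary once the supports $\operatorname{supp}\psi\subset\{1/2\le|\xi|\le25/24\}$ are written down. The remaining arithmetic is routine.
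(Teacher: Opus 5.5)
Your argument is correct. The paper gives no proof of its own (it cites \cite{KLVZ09}), and your route is the standard proof of these estimates: Schwartz decay of the kernel of $P_{\le\nu}$ (resp.\ of $\widehat{\varphi_{\le R}}$) against the $\gtrsim R$ spatial separation (resp.\ $\gtrsim\max\{\nu,\mu\}$ frequency separation), then Young/Schur, with $P_\nu P_\mu=0$ handling the $\varphi_{>R}$ case. The one step to tighten is the frequency separation: the bound $|\eta|\le2\mu$ you first write gives no gap when $\nu=4\mu$, so you must use $|\eta|\le\frac{25}{24}\mu$, which yields $|\xi-\eta|\ge\frac{23}{96}\max\{\nu,\mu\}$, as your final paragraph anticipates.
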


	\subsection{Lorentz spaces}
	 In this subsection, we recall the definition of Lorentz spaces and some of their relevant properties.

	Let $f$ be a measurable function on $\mathbb{R}^d$. The distribution function of $f$ is defined by
	\begin{equation}
		d_f(\lambda)=: |\{x\in \mathbb{R}^d : |f(x)|>\lambda\}|, \quad \lambda>0,\notag
	\end{equation}
	where $|A|$ is the Lebesgue measure of a set $A$ in $\mathbb{R}^d$. The decreasing rearrangement of $f$ is defined by
	\begin{equation}
		f^*(s)=: \inf \left\{ \lambda>0 : d_f(\lambda)\leq s\right\}, \quad s>0.\notag
	\end{equation}
	
	\begin{defn}[Lorentz spaces] ~\\
		Let $0<r<\infty$ and $0<\rho\leq \infty$. The Lorentz space $L^{r,\rho}(\mathbb{R}^d)$ is defined by
		\begin{equation}
			L^{r,\rho}(\mathbb{R}^d)=: \left\{ f \text{ is measurable on } \mathbb{R}^d : \|f\|_{L^{r,\rho}}<\infty\right\}, \notag
		\end{equation}
		where
		\[
		\|f\|_{L^{r,\rho}}=: \left\{
		\begin{array}{cl}
			( \frac{\rho}{r} \int_0^\infty (s^{1/r} f^*(s))^\rho \frac{1}{s}ds)^{1/\rho} &\text{ if } \rho <\infty, \\
			\sup_{s>0} s^{1/r} f^*(s) &\text{ if } \rho=\infty.
		\end{array}
		\right.
		\]
	\end{defn}
	
	We collect the following basic properties of $L^{r,\rho}(\mathbb{R}^d)$ in the following lemmas.
	\begin{lemma}[Properties of Lorentz spaces \cite{ON63}] \ 
		\begin{itemize}
			\item For $1<r<\infty$, $L^{r,r}(\mathbb{R}^d) \equiv L^r(\mathbb{R}^d)$ and by convention, $L^{\infty,\infty}(\mathbb{R}^d)= L^\infty(\mathbb{R}^d)$.
			\item For $1<r<\infty$ and $0<\rho_1<\rho_2\leq \infty$, $L^{r,\rho_1}(\mathbb{R}^d)\subset L^{r,\rho_2}(\mathbb{R}^d)$. 
			\item For $1<r<\infty$, $0<\rho \leq \infty$, and $\theta>0$, $\||f|^\theta\|_{L^{r,\rho}} = \|f\|^\theta_{L^{\theta r, \theta \rho}}$.
			\item For $b>0$, $|x|^{-b} \in L^{\frac{d}{b},\infty}(\mathbb{R}^d)$ and $\||x|^{-b}\|_{L^{\frac{d}{b},\infty}} = |B(0,1)|^{\frac{b}{d}}$, where $B(0,1)$ is the unit ball of $\mathbb{R}^d$.
		\end{itemize}
	\end{lemma}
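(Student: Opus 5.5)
The plan is to work directly with the distribution function $d_f$ and the decreasing rearrangement $f^*$, proving the four items in order. The one structural fact I will use throughout is that $f$ and $f^*$ are equimeasurable, i.e.\ $|\{s>0:f^*(s)>\lambda\}|=d_f(\lambda)$ for every $\lambda>0$, together with the monotonicity of $f^*$ (nonincreasing and right-continuous).

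For the first item, take $\rho=r$ in the definition. The prefactor is $\rho/r=1$ and the integrand $(s^{1/r}f^*(s))^r s^{-1}$ reduces to $f^*(s)^r$. Hence
\[
\|f\|_{L^{r,r}}^r=\int_0^\infty f^*(s)^r\dd s.
\]
By the layer-cake formula and equimeasurability, this equals $\int_0^\infty r\lambda^{r-1}d_f(\lambda)\dd\lambda=\|f\|_{L^r}^r$. The convention $L^{\infty,\infty}=L^\infty$ is a definition, so nothing needs proving there.

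The second item (nesting in the second index) is where a genuine estimate is needed, and it is the main step.
\begin{itemize}
\item First I will show that $\|f\|_{L^{r,\infty}}\le\|f\|_{L^{r,\rho_1}}$. Since $f^*$ is nonincreasing, $f^*(\tau)\ge f^*(s)$ for $\tau<s$. This gives
\[
\frac{\rho_1}{r}\int_0^s\bigl(\tau^{1/r}f^*(\tau)\bigr)^{\rho_1}\frac{\dd\tau}{\tau}\ge f^*(s)^{\rho_1}\,\frac{\rho_1}{r}\int_0^s\tau^{\rho_1/r-1}\dd\tau=\bigl(s^{1/r}f^*(s)\bigr)^{\rho_1}.
\]
Taking the supremum over $s>0$ yields the claim. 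This also settles the case $\rho_2=\infty$.
\item For $\rho_2<\infty$, I will split $(s^{1/r}f^*)^{\rho_2}=(s^{1/r}f^*)^{\rho_2-\rho_1}(s^{1/r}f^*)^{\rho_1}$. Bounding the first factor by $\|f\|_{L^{r,\infty}}^{\rho_2-\rho_1}$ and then using the previous step gives $\|f\|_{L^{r,\rho_2}}\le C\|f\|_{L^{r,\rho_1}}$, with $C$ depending only on $\rho_1,\rho_2,r$.
\end{itemize}

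For the third item, note that $d_{|f|^\theta}(\lambda)=d_f(\lambda^{1/\theta})$, so $(|f|^\theta)^*=(f^*)^\theta$. Substituting this into the definition, the integrand becomes $(s^{1/(\theta r)}f^*(s))^{\theta\rho}s^{-1}$ and the prefactor is $\rho/r=\theta\rho/(\theta r)$. Taking the $1/\rho$ power produces exactly $\|f\|_{L^{\theta r,\theta\rho}}^\theta$. The case $\rho=\infty$ follows by the same substitution in the supremum.

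For the fourth item, compute the distribution function directly:
\[
d(\lambda)=|\{|x|<\lambda^{-1/b}\}|=|B(0,1)|\lambda^{-d/b}.
\]
Inverting, the rearrangement is $f^*(s)=(s/|B(0,1)|)^{-b/d}$. Therefore $s^{b/d}f^*(s)=|B(0,1)|^{b/d}$ for every $s>0$, which gives both the membership in $L^{d/b,\infty}$ and the exact value of the norm. No step beyond the monotonicity argument in the second item presents any real difficulty.
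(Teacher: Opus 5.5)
Your proof is correct. The paper gives no argument for this lemma; it only cites O'Neil. So your proposal is a self-contained replacement for that citation rather than a competing route.

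The first, third and fourth items are the standard direct computations with $d_f$ and $f^*$, and equimeasurability is all they need.

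The only place you give something away is the second item. Bounding $(s^{1/r}f^*(s))^{\rho_2-\rho_1}$ by the global quantity $\|f\|_{L^{r,\infty}}^{\rho_2-\rho_1}$ gives $\|f\|_{L^{r,\rho_2}}\le(\rho_2/\rho_1)^{1/\rho_2}\|f\|_{L^{r,\rho_1}}$. That is all the stated inclusion requires.

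You can get constant exactly $1$ instead:
\begin{enumerate}
\item Apply your first-step inequality pointwise, $(s^{1/r}f^*(s))^{\rho_1}\le\phi(s):=\frac{\rho_1}{r}\int_0^s(\tau^{1/r}f^*(\tau))^{\rho_1}\frac{\dd\tau}{\tau}$, rather than after taking the supremum.
\item Observe that $(s^{1/r}f^*(s))^{\rho_1}s^{-1}=\frac{r}{\rho_1}\phi'(s)$.
\item Integrate $\frac{\rho_2}{\rho_1}\phi^{\rho_2/\rho_1-1}\phi'$ over $(0,\infty)$. This gives $\|f\|_{L^{r,\rho_2}}^{\rho_2}\le\phi(\infty)^{\rho_2/\rho_1}=\|f\|_{L^{r,\rho_1}}^{\rho_2}$.
\end{enumerate}
This is the reason for the normalizing factor $\rho/r$ in the definition. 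Nothing beyond set inclusion is used in the paper, so your version suffices.
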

	\begin{lemma}[H\"older's inequality \cite{ON63}]  \ 
		\begin{itemize}
			\item  Let $1<r, r_1, r_2<\infty$ and $1\leq \rho, \rho_1, \rho_2 \leq \infty$ be such that
			\[
			\frac{1}{r}=\frac{1}{r_1}+\frac{1}{r_2}, \quad \frac{1}{\rho} \leq \frac{1}{\rho_1}+\frac{1}{\rho_2}.
			\]
			Then for any $f \in L^{r_1, \rho_1}(\mathbb{R}^d)$ and $g\in L^{r_2, \rho_2}(\mathbb{R}^d)$
			\[
			\|fg\|_{L^{r,\rho}} \lesssim \|f\|_{L^{r_1, \rho_1}} \|g\|_{L^{r_2,\rho_2}}.
			\]
			\item Let  $1<r_1,r_2<\infty $ and  $1\le \rho_1,\rho_2\le \infty $  be such that 
			\begin{equation}
				1=\frac{1}{r_1}+\frac{1}{r_2},\quad 1\le \frac{1}{\rho_1}+\frac{1}{\rho_2}.\notag
			\end{equation}
			Then  for any $f \in L^{r_1, \rho_1}(\mathbb{R}^d)$ and $g\in L^{r_2, \rho_2}(\mathbb{R}^d)$
			\begin{equation}
				\|fg\|_{L^1}\lesssim  \|f\|_{L^{r_1, \rho_1}} \|g\|_{L^{r_2,\rho_2}}.\notag
			\end{equation}
		\end{itemize}
	\end{lemma}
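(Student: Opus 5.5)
We argue directly on decreasing rearrangements, as in O'Neil's original approach. The basic pointwise tool is the rearrangement inequality
\[
(fg)^*(s_1+s_2)\le f^*(s_1)\,g^*(s_2),\qquad s_1,s_2>0 .
\]
To prove it, note that $\{|fg|>f^*(s_1)g^*(s_2)\}\subset\{|f|>f^*(s_1)\}\cup\{|g|>g^*(s_2)\}$. By the definition of $f^*$ (and right-continuity of the distribution function) these two sets have measures at most $s_1$ and $s_2$. Hence $d_{fg}(f^*(s_1)g^*(s_2))\le s_1+s_2$, which is the claim. Taking $s_1=s_2=s/2$ gives $(fg)^*(s)\le f^*(s/2)\,g^*(s/2)$.

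For the first item, assume first that $\frac1\rho=\frac1{\rho_1}+\frac1{\rho_2}$ with $\rho<\infty$. Write
\[
s^{1/r}(fg)^*(s)\le \bigl(s^{1/r_1}f^*(s/2)\bigr)\bigl(s^{1/r_2}g^*(s/2)\bigr),
\]
using $\frac1r=\frac1{r_1}+\frac1{r_2}$. Then apply the ordinary H\"older inequality in $L^\rho\bigl((0,\infty),\frac{ds}{s}\bigr)$ with exponents $\rho_1,\rho_2$. The substitution $s\mapsto 2s$ leaves $\frac{ds}{s}$ invariant and costs only a factor $2^{1/r_1+1/r_2}$. The normalizing constants $(\rho/r)^{1/\rho}$ are absorbed into the implicit constant. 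The case $\rho=\infty$ (hence $\rho_1=\rho_2=\infty$) is the same argument with suprema in place of integrals.

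If only $\frac1\rho\le\frac1{\rho_1}+\frac1{\rho_2}$ holds, define $\tilde\rho$ by $\frac1{\tilde\rho}=\frac1{\rho_1}+\frac1{\rho_2}$, so that $\tilde\rho\le\rho$. The equality case bounds $\|fg\|_{L^{r,\tilde\rho}}$, and the nesting $L^{r,\tilde\rho}\subset L^{r,\rho}$ from the preceding lemma finishes the proof. The requirement $r>1$ ensures that we are in the range where that lemma applies.

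For the second item, start from the Hardy--Littlewood rearrangement inequality $\int|fg|\,dx\le\int_0^\infty f^*(s)g^*(s)\,ds$. This follows from the layer-cake formula, since $|\{|f|>\lambda\}\cap\{|g|>\mu\}|\le\min(d_f(\lambda),d_g(\mu))$. Since $\frac1{r_1}+\frac1{r_2}=1$, we have
\[
\int_0^\infty f^*g^*\,ds=\int_0^\infty \bigl(s^{1/r_1}f^*\bigr)\bigl(s^{1/r_2}g^*\bigr)\,\frac{ds}{s}.
\]
If $\frac1{\rho_1}+\frac1{\rho_2}\ge1$, choose $\tilde\rho_2\ge\rho_2$ with $\frac1{\rho_1}+\frac1{\tilde\rho_2}=1$. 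This is possible because $\rho_2$ can be enlarged, and if $\rho_1=1$ we take $\tilde\rho_2=\infty$. H\"older's inequality in $L^1(ds/s)$ then bounds the integral by $\|f\|_{L^{r_1,\rho_1}}\|g\|_{L^{r_2,\tilde\rho_2}}$. The embedding $L^{r_2,\rho_2}\subset L^{r_2,\tilde\rho_2}$ completes the proof.

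The only step needing care is the pointwise rearrangement inequality, specifically the convention that $d_f(f^*(s))\le s$. This holds because the infimum defining $f^*$ is attained, by right-continuity of $d_f$. Everything else is one-dimensional H\"older on $(0,\infty)$ with the Haar measure $ds/s$, together with the monotonicity in the second index of the preceding lemma.
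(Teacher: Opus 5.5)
Your proof is correct. Note that the paper gives no proof of this lemma; it is quoted from O'Neil, so there is no in-paper argument to match, and what you wrote is a valid self-contained substitute.

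Your argument is also lighter than the classical normed treatment going back to O'Neil. That treatment works with the averaged rearrangement $f^{**}(t)=\frac1t\int_0^t f^*$, and it needs Hardy's inequality to compare the $f^{**}$ and $f^*$ functionals. This is where the hypothesis $r>1$ in the statement genuinely enters, and it is what yields inequalities for honest norms. Your route uses only the pointwise bound $(fg)^*(s_1+s_2)\le f^*(s_1)g^*(s_2)$, the Hardy--Littlewood inequality, and one-dimensional H\"older on $\bigl((0,\infty),\frac{ds}{s}\bigr)$. It therefore works at the level of quasi-norms for all $0<r,r_1,r_2<\infty$; the nesting $L^{r,\tilde\rho}\subset L^{r,\rho}$ also holds for every $r>0$, and H\"older with $\tilde\rho<1$ is fine. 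The cost is a non-sharp constant of size $2^{1/r}$ times the normalization factors, which is harmless since the paper only uses the estimate up to $\lesssim$.

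The one step worth spelling out is the Hardy--Littlewood inequality. The layer-cake computation bounds $\int|fg|\,dx$ by $\iint\min\bigl(d_f(\lambda),d_g(\mu)\bigr)\,d\lambda\,d\mu$. You also need that $\int_0^\infty f^*g^*\,ds$ equals this same double integral. That follows from $\{s>0:f^*(s)>\lambda\}=\{s>0:s<d_f(\lambda)\}$, which is again a consequence of the monotonicity of $f^*$ and the right-continuity of $d_f$.
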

	
 \subsection{Strichartz estimates, local well-posedness and stability}

In this subsection, we recall Strichartz estimates in Lorentz spaces, together with the local well-posedness and stability results for the Cauchy problem \eqref{eq:inls}.
 \begin{defn}[Admissibility] 
 	A pair $(q,r)$ is said to be Schr\"odinger admissible, for short $(q,r)\in \Lambda $, where
 	\begin{equation}
 		\Lambda=\left\{(q,r): 2\le q,r\le \infty,\  	\frac{2}{q}+\frac{d}{r}=\frac{d}{2},\ (q,r,d)\neq (2,\infty ,2)\right\}.\notag
 	\end{equation}
 \end{defn}

 \begin{proposition}[Strichartz estimates, \cite{KT98,LMZ25}]\label{prop:strichartz}\ \\
 	\begin{itemize}
 		\item Let $(q,r)\in \Lambda $ with $r<\infty$. Then for any $f\in L^2(\mathbb{R}^d)$
 		\begin{align}  
 			\|e^{it\Delta }f\|_{L^q_t L^{r,2}_x(\mathbb{R}\times \mathbb{R}^d)} \lesssim \|f\|_{L^2_x(\mathbb{R}^d)}.\notag
 		\end{align}
 		
 		\item Let $(q_1, r_1), (q_2,r_2)\in \Lambda $ with $r_1, r_2<\infty$, $t_0\in \mathbb{R}$ and $I\subset \mathbb{R}$ be an interval containing $t_0$. Then 		for any $F\in L_t^{q_2'}L^{r_2',2}_x(I\times \mathbb{R}^d)$ 
 		\begin{align} 
 			\left\|\int_{t_0}^t e^{i(t-\tau)\Delta } F(\tau) d\tau\right\|_{L^{q_1}_tL^{r_1,2}_x(I\times \mathbb{R}^d)} \lesssim \|F\|_{L^{q_2'}_tL^{r_2',2}_x(I\times \mathbb{R}^d)}.\notag
 		\end{align}
 		\item   Let  $(q,r)\in \Lambda$. Then  
 		\begin{equation} 
 			\left\|\int_I e^{-is\Delta}h(s)\dd s\right\|_{L_x^2}
 			\lesssim\|h\|_{L_s^{q'}L_x^{r'}(I\times\R^d)}.\notag
 		\end{equation}
 	 The constants are independent of $I$.  
 	\end{itemize}
 \end{proposition}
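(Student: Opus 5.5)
The three bullets of Proposition~\ref{prop:strichartz} are the standard Strichartz estimates upgraded to Lorentz spaces, and I would obtain them from the classical endpoint theory of Keel--Tao~\cite{KT98} by real interpolation. The abstract framework of \cite{KT98} applies whenever the operator $U(t)=e^{it\Delta}$ satisfies the energy estimate $\|U(t)f\|_{L^2}\lesssim\|f\|_{L^2}$ and the dispersive estimate $\|U(t)U(s)^*g\|_{L^\infty}\lesssim|t-s|^{-d/2}\|g\|_{L^1}$. Both hold by Plancherel and by the explicit kernel of $e^{it\Delta}$. Keel and Tao in fact prove their estimates in Lorentz form: their bilinear interpolation argument yields $\|e^{it\Delta}f\|_{L_t^qL_x^{r,2}}\lesssim\|f\|_{L^2}$ for all admissible pairs with $r<\infty$. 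The point is that the dyadic-in-time decomposition, combined with real interpolation between the $L^1\to L^\infty$ and $L^2\to L^2$ bounds, naturally produces the $L^{r,2}$ norm. So the first bullet I would quote essentially verbatim from \cite{KT98}.

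For the second (inhomogeneous) bullet I would use the $TT^*$ argument. By duality of $L^{r,2}$ with $L^{r',2}$ for $1<r<\infty$, the first bullet gives the dual estimate
\[
\left\|\int_{\R}e^{-i\tau\Delta}F(\tau)\dd\tau\right\|_{L^2}\lesssim\|F\|_{L_t^{q'}L_x^{r',2}}.
\]
Composing this with the homogeneous estimate yields the untruncated bound for $\int_{\R}e^{i(t-\tau)\Delta}F\dd\tau$ for arbitrary admissible pairs $(q_1,r_1)$ and $(q_2,r_2)$. To pass to the retarded integral $\int_{t_0}^t$, I would argue in two cases:
\begin{itemize}
\item off the double endpoint, apply the Christ--Kiselev lemma, which requires $q_2'<q_1$;
\item in the double endpoint case $q_1=q_2=2$, follow Keel--Tao's direct bilinear argument for the truncated form, again with Lorentz norms on the spatial side.
\end{itemize}
Restricting to an interval $I$ is done by extending $F$ by zero outside $I$, which shows the constants are independent of $I$. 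This is the statement recorded in \cite{LMZ25}.

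The third bullet is just the dual homogeneous estimate with ordinary Lebesgue spaces. Since $L^{r'}=L^{r',r'}\subset L^{r',2}$ for $r'\le2$, it follows from the displayed dual estimate, with the endpoint $r=\infty$ excluded by the standing convention. Alternatively, it follows from the classical non-Lorentz Strichartz estimate by duality.

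The only delicate point is the double endpoint $q_1=q_2=2$ in the retarded estimate, where Christ--Kiselev fails. I would not reprove it but cite the Lorentz-refined endpoint theorem of \cite{KT98}, Section~6, whose proof already decomposes $|t-s|\sim2^j$ and sums the resulting pieces using $L^{r,2}$ interpolation.
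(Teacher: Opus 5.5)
Your proposal is correct and consistent with the paper, which proves nothing here and simply cites \cite{KT98} and \cite{LMZ25}. Your reduction is the standard argument behind those citations: the Lorentz-form homogeneous estimate from Keel--Tao, duality via $(L^{r,2})^*=L^{r',2}$, $TT^*$ with Christ--Kiselev off the double endpoint, and $L^{r'}\subset L^{r',2}$ for the third bullet. Two small remarks: Keel--Tao's Lorentz-space theorem (Theorem~10.1 there) already includes the retarded estimate for all admissible pairs, so the Christ--Kiselev step is optional; and for $d\ge3$ every $(q,r)\in\Lambda$ has $r\le 2d/(d-2)$, so excluding $r=\infty$ needs no separate convention.
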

 
 \begin{proposition}[Local well-posedness, {\cite{AT21,CG16,LMZ25}}]\label{prop:local}
 	Let $d\ge3$ and $0<b<\min\{2,d/2\}$. For any $u_0\in L^2(\R^d)$ and $t_0\in\R$, there exists a unique maximal-lifespan strong solution $u:I\times\R^d\to\C$ to \eqref{eq:inls} with $u(t_0)=u_0$. The solution conserves its mass and satisfies the following properties:
 	
 	\begin{enumerate}[label=\textup{(\alph*)},leftmargin=2.8em]
 		\item \textit{Blowup criterion.}
 		If $\sup I<\infty$, then
 	  $ 		\crit{u}{[t_0,\sup I)}=\infty.$ 
 		An analogous statement holds at the left endpoint of $I$.
 		
 		\item \textit{Scattering criterion.}
 		If  $ 		\crit{u}{[t_0,\sup I)}<\infty,$ 
 		then $\sup I=\infty$ and $u$ scatters forward in $L^2$. Conversely, forward $L^2$ scattering implies that the forward critical spacetime norm is finite. Analogous statements hold backward in time.
 		
 		\item \textit{Continuous dependence.}
 		If $u_{0,n}\to u_0$ in $L^2(\R^d)$ and $u_n$ denotes the corresponding maximal-lifespan solution with initial data $u_{0,n}$ at $t_0$, then $u_n\to u$ locally uniformly in $L^2$ on the lifespan of $u$.
 		
 		\item \textit{Wave operators.}
 		For every $\phi\in L^2(\R^d)$, there exist $T>0$ and a unique strong solution $u$ to \eqref{eq:inls} on $(T,\infty)$ such that
 		\[
 		\lim_{t\to\infty}\|u(t)-e^{it\Delta}\phi\|_{L_x^2}=0.
 		\]
 		An analogous statement holds backward in time.
 		
 		\item \textit{Small-data scattering.}
 		If $\|u_0\|_{L_x^2}$ is sufficiently small, then $u$ is global and scatters in both time directions. Moreover,  $ 		\crit{u}{\R}\lesssim\|u_0\|_{L_x^2}.$ 
 
 		\item \textit{$H^1$ persistence and scattering.}
 		If $u_0\in H^1(\R^d)$, then $u\in C(I;H^1(\R^d))$ and its energy is conserved.  
 		If a maximal lifespan endpoint is finite, then the $H^1$ norm diverges as that endpoint is approached. Furthermore, if $u$ scatters in $L^2$ in either time direction, then it also scatters in $H^1$ in the same direction.
 	\end{enumerate}
 \end{proposition}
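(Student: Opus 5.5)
The plan is to run the standard Cazenave–Weissler contraction argument at the $L^2$ level. All spacetime norms are taken in the Lorentz–Strichartz space
\[
S(I)=L_t^{q}L_x^{r,2}(I\times\R^d),\qquad q=\tfrac{2(d+4-2b)}{d},\quad r=\tfrac{2(d+4-2b)}{d+2-2b},
\]
and the nonlinearity is placed in the dual space $N(I)=L_t^2L_x^{\frac{2d}{d+2},2}$.

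\textbf{Basic nonlinear estimate.} Write $\alpha=\frac{4-2b}{d}$ and $F(u)=|x|^{-b}|u|^\alpha u$. I will check that
\[
\frac{d+2}{2d}=\frac bd+\frac{\alpha+1}{r},\qquad \frac12=\frac{\alpha+1}{q}.
\]
Combining $|x|^{-b}\in L^{d/b,\infty}$, the power rule $\||u|^{\alpha+1}\|_{L^{r/(\alpha+1),2/(\alpha+1)}}=\|u\|^{\alpha+1}_{L^{r,2}}$, the Lorentz Hölder inequality, and the embedding $L^{\cdot,2/(\alpha+1)}\subset L^{\cdot,2}$ (valid because $\alpha+1\ge1$) then gives
\[
\|F(u)\|_{N(I)}\lesssim\|u\|_{S(I)}^{\alpha+1}.
\]
The pointwise bound $|F(u)-F(v)|\lesssim|x|^{-b}(|u|^\alpha+|v|^\alpha)|u-v|$ gives the corresponding difference estimate
\[
\|F(u)-F(v)\|_{N}\lesssim\bigl(\|u\|_S^\alpha+\|v\|_S^\alpha\bigr)\|u-v\|_S .
\]
Together with the inhomogeneous Strichartz estimate of Proposition~\ref{prop:strichartz}, this makes the Duhamel map a contraction on a small ball of $S(I)\cap C_tL^2_x$ whenever $\|e^{i(t-t_0)\Delta}u_0\|_{S(I)}$ is small. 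By the homogeneous Strichartz estimate and dominated convergence in $t$, that smallness holds either for $I$ short or for $\|u_0\|_{L^2}$ small.

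\textbf{$L^2$ consequences, items (a)–(e).} Uniqueness, mass conservation (by approximation), and the maximal lifespan follow in the usual way.

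(a) If $\|u\|_{S([t_0,\sup I))}<\infty$, subdivide into finitely many intervals on which the $S$-norm is small. This shows that $e^{-it\Delta}u(t)$ is Cauchy in $L^2$, using the dual Strichartz estimate for $\int_t^{t'}e^{-i\tau\Delta}F(u)\,d\tau$. Hence $u(t)$ has an $L^2$ limit, and the solution can be continued past $\sup I$.

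(b) The same computation with $\sup I=\infty$ gives scattering. Conversely, if $u$ scatters, then $\|e^{it\Delta}u_+\|_{S([T,\infty))}\to0$, and the small-data argument run from time $T$ gives finiteness of the $S$-norm.

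(c) Continuous dependence follows from a standard stability lemma: compare $u_n$ with $u$ on the finitely many small-norm subintervals, using the difference estimate.

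(d) Wave operators come from solving the fixed-point problem
\[
u(t)=e^{it\Delta}\phi-i\int_t^\infty e^{i(t-\tau)\Delta}F(u)\,d\tau
\]
on $(T,\infty)$ with $T$ large.

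(e) Small-data scattering is the global version of the contraction.

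\textbf{$H^1$ theory, item (f), and the main obstacle.} Here I expect the main difficulty, because differentiating $F(u)$ produces the more singular term $|x|^{-b-1}|u|^{\alpha+1}$. I would estimate it by writing $|x|^{-b-1}|u|^{\alpha}|u|=|x|^{-b}|u|^\alpha\cdot|u|/|x|$ and applying the Lorentz-space Hardy inequality
\[
\bigl\|u/|x|\bigr\|_{L^{r,2}}\lesssim\|\nabla u\|_{L^{r,2}},\qquad r<d .
\]
This places $\nabla F(u)$ in $N(I)$ with the bound
\[
\|\nabla F(u)\|_{N(I)}\lesssim\|u\|_{S(I)}^{\alpha}\|\nabla u\|_{S(I)} .
\]
I then run the contraction in $\{u\in S:\nabla u\in S\}$, measuring distances only in the $L^2$-level norm $S$. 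The estimate is linear in $\nabla u$, so on each interval of small $S$-norm the Strichartz bound for $\nabla u$ closes. Finitely many such intervals give $u\in C(I;H^1)$, and when $\|u\|_{S(\R_+)}<\infty$ this also gives $\nabla u\in S(\R_+)$. The Cauchy argument from (b) applied to $\nabla u$ then yields $H^1$ scattering.

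Energy conservation I would obtain by regularizing, either truncating the weight $|x|^{-b}$ or mollifying the data, and passing to the limit using (c).

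For the $H^1$ blowup alternative, note that the problem is $H^1$-subcritical. Sobolev embedding combined with Strichartz gives $\|e^{it\Delta}u_0\|_{S([0,\delta])}\lesssim\delta^{\theta}\|u_0\|_{H^1}$ for some $\theta>0$. So the local existence time is bounded below in terms of $\|u_0\|_{H^1}$, and the $H^1$ norm must diverge at any finite endpoint.
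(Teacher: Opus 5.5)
The paper gives no proof of this proposition; it is quoted from \cite{AT21,CG16,LMZ25}. Your $L^2$ part is the standard Aloui--Tayachi argument: contraction in $L_t^qL_x^{r,2}$, with the nonlinearity placed in $L_t^2L_x^{\frac{2d}{d+2},2}$ via $|x|^{-b}\in L^{d/b,\infty}$ and Lorentz H\"older. Items (a)--(e) are fine.

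\textbf{The gap is in (f).} The Lorentz Hardy inequality $\|u/|x|\|_{L^{r,2}}\lesssim\|\nabla u\|_{L^{r,2}}$ needs $r<d$, and you never check this. Since $r=2+\frac{4}{d+2-2b}$, one has $r<d$ if and only if $(d-2)(d+2-2b)>4$. This fails exactly when $d=3$ and $\frac12\le b<\frac32$, and there the inequality is false. For Schwartz $u$ with $u(0)\neq0$, $u/|x|\sim|x|^{-1}$ near the origin is not in $L^{r,2}$ once $r\ge d$.

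So your bound $\|\nabla F(u)\|_{N}\lesssim\|u\|_S^\alpha\|\nabla u\|_S$ is unproved in that range. With it go three conclusions:
\begin{itemize}
\item $H^1$ persistence on the $L^2$ lifespan;
\item the $H^1$ blowup alternative;
\item the upgrade from $L^2$ to $H^1$ scattering.
\end{itemize}
This is precisely the range $d=3$, $\frac12<b<\frac32$ in which the paper's results lean on (f). Re-choosing Strichartz exponents does not cure it. If $|x|^{-b}|u|^\alpha$ is kept in the same norms, the time exponents force $\nabla u\in L_t^{q_1}L_x^{p}$ with $q_1\le q$, i.e. $p\ge r\ge d$.

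\textbf{How to repair it.} The bound itself is true, but it must come from a Caffarelli--Kohn--Nirenberg-type interpolation rather than from Hardy. For $r>d$, split at $|x|=\rho$.
\begin{itemize}
\item Inside, use $\|u\|_{L^\infty}\lesssim\|u\|_{L^r}^{1-d/r}\|\nabla u\|_{L^r}^{d/r}$ together with $\||x|^{-b-1}\|_{L^{\frac{2d}{d+2}}(|x|<\rho)}\simeq\rho^{\frac d2-b}$, which is finite because $b<\frac d2$.
\item Outside, use H\"older with $\||x|^{-b-1}\|_{L^{d/b}(|x|>\rho)}\simeq\rho^{-1}$, which gives $\rho^{-1}\|u\|_{L^r}^{\alpha+1}$.
\end{itemize}
Choosing $\rho=\|u\|_{L^r}/\|\nabla u\|_{L^r}$ yields
\[
\bigl\||x|^{-b-1}|u|^{\alpha+1}\bigr\|_{L^{\frac{2d}{d+2}}}\lesssim\|u\|_{L^r}^{\alpha}\|\nabla u\|_{L^r}.
\]
At $r=d$, replace $L^\infty$ by $L^M$ with $M$ large. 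Since $\frac{2d}{d+2}<2<r$, these Lebesgue norms compare in the right direction with the Lorentz norms of $N$ and $S$. Hence the derivative estimate holds on the whole parameter range, and the rest of your (f) argument goes through.

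A smaller point concerns energy conservation. Item (c) is $L^2$ continuity for a fixed equation. By itself it neither handles a truncated weight nor passes the energy to the limit, so that step also needs uniform $H^1$ control of the approximations.
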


 Beyond the small-data theory, Liu, Miao and Zheng~\cite{LMZ25} established global well-posedness and scattering for the focusing mass-critical INLS with mass strictly below that of the ground state. We recall  the result below.
 
 \begin{proposition}[Scattering below the ground-state mass, {\cite[Theorem~1.1]{LMZ25}}]\label{prop:subthreshold-scattering}
 	Let $d\ge3$ and $0<b<\min\{2,d/2\}$. Suppose that $u_0\in L^2(\R^d)$ satisfies
 	\[
 	M(u_0)<M(Q).
 	\]
 	Then the corresponding maximal-lifespan solution to \eqref{eq:inls} is global and scatters in $L^2$ in both time directions.
 \end{proposition}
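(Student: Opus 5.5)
This result is quoted from \cite[Theorem~1.1]{LMZ25}. The plan below reconstructs it by the concentration-compactness/rigidity scheme of Kenig and Merle~\cite{KM06}, combined with Dodson's long-time Strichartz method~\cite{D12}.

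\textbf{Setting up a minimal counterexample.} Argue by contradiction. For $m\ge 0$ let $L(m)$ be the supremum of the critical norm $\crit{u}{\R}$ over all solutions with $M(u_0)\le m$, and set
\[
m_c=\sup\{m: L(m)<\infty\}.
\]
Proposition~\ref{prop:local}(e) gives $m_c>0$. If the theorem failed we would have $m_c<M(Q)$.

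\textbf{Linear profile decomposition.} The first step is a linear profile decomposition for bounded sequences in $L^2$, adapted to \eqref{eq:inls}. Since the equation has no translation or Galilean symmetry, I would show the following. Profiles whose spatial center or frequency center escapes to infinity relative to the scale have asymptotically vanishing interaction with the weight $|x|^{-b}$. To see this, apply Lorentz--H\"older with $|x|^{-b}\in L^{d/b,\infty}$, and use that $|x|^{-b}$ is small on a set where such a profile eventually lives. Such profiles therefore evolve essentially linearly and scatter.

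The remaining profiles are centered at the origin, modulo scaling and time translation. The stability theory, i.e.\ the perturbative part of Proposition~\ref{prop:local}, then gives the Palais--Smale condition modulo scaling. As a consequence there is a minimal-mass blowup solution $u_c$ with $M(u_c)=m_c$, almost periodic modulo scaling in the sense of Definition~\ref{def:AP}, with local constancy of $N(t)$ as in Proposition~\ref{prop:localconstancy}.

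\textbf{Excluding the almost-periodic solution.} After the standard rescaling, one may assume $N(t)\le 1$. One then splits into two scenarios:
\begin{enumerate}
\item the rapid frequency cascade, $\int_0^\infty N(t)^3\,dt<\infty$;
\item the quasi-soliton, $\int_0^\infty N(t)^3\,dt=\infty$.
\end{enumerate}

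The key tool in both cases is a long-time Strichartz estimate. It controls $\|P_{>\nu}u\|$ in the dual/Strichartz Lorentz norms on intervals $J$ in terms of $\int_J N(t)^3\,dt$. It is proved by induction on frequency from the Duhamel formula, using bilinear Strichartz estimates. The singular coefficient is handled with Lorentz-space H\"older together with the splitting $|x|^{-b}=|x|^{-b}\ind_{|x|\le 1}+|x|^{-b}\ind_{|x|>1}$.

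In the rapid cascade case, long-time Strichartz combined with the reduced Duhamel formula (Proposition~\ref{prop:reduced}) yields $u\in H^s$ for every $s<1+\delta$, with $\|\nabla u(t)\|_{L^2}\lesssim N(t)\to 0$. Since $m_c<M(Q)$, the coercivity \eqref{eq:energy-nonnegative} gives $E(u)\simeq\|\nabla u\|_{L^2}^2\to0$. Energy conservation then forces $E(u)=0$, hence $\nabla u\equiv0$ and $u\equiv 0$, a contradiction.

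In the quasi-soliton case, I would use a frequency-localized virial/Morawetz functional centered at the origin; the absence of translations makes a plain virial suffice. Its time derivative is bounded below by
\[
\|\nabla P_{\le K}u\|_{L^2}^2\left(1-(m_c/M(Q))^{\frac{4-2b}{d}}\right)\gtrsim\int_J N(t)^3\,dt,
\]
again by the sharp Gagliardo--Nirenberg inequality \eqref{eq:sharp}. The functional itself is bounded by $o(K)$ times $\int_J N(t)^3\,dt$, and the error terms coming from frequency truncation are controlled by long-time Strichartz. Letting $J$ grow gives the contradiction.

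\textbf{Main obstacle.} I expect the hardest step to be the long-time Strichartz estimate and the associated error terms in the presence of the singular weight. Commutators between $P_{\le K}$ and $|x|^{-b}$, and the low integrability of $|x|^{-b}$ near the origin (especially when $d=3,4$), are the delicate points. I would first try to place the singular part into $L^{d/b,\infty}$ and absorb it using the Lorentz Strichartz pairs of Proposition~\ref{prop:strichartz}.
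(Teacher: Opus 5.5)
The paper gives no proof of this statement. It imports \cite[Theorem~1.1]{LMZ25}, which, per the introduction, runs the Kenig--Merle scheme together with Dodson's long-time Strichartz estimates. Most of your architecture is consistent with that: the minimal mass $m_c$, a profile decomposition in which escaping profiles scatter (Proposition~\ref{prop:escape}), almost periodicity modulo scaling with $N(t)\le1$, the dichotomy according to $\int_0^\infty N(t)^3\dd t$, and additional regularity plus the coercivity \eqref{eq:energy-nonnegative} for the rapid cascade.

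The gap is in your exclusion of the quasi-soliton. Read literally, your chain compares a pointwise-in-time lower bound with an integrated quantity. Moreover, an action bounded by ``$o(K)$ times $\int_J N(t)^3\dd t$'' gives no contradiction against a lower bound of size $K=\int_J N(t)^3\dd t$.

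The more serious problem is the truncated virial itself.
\begin{itemize}
\item To get coercivity from \eqref{eq:sharp}, the weight must equal $|x|^2$ on the region carrying the solution. So you need a radius $R\gg N(t)^{-1}$ throughout $J$, and $N(t)\le1$ gives no lower bound on $N(t)$.
\item The action is then an $\dot H^1$-level quantity, bounded only by $R\|u\|_{L^2}\|\nabla P_{\le K}u\|_{L^2}$.
\item The truncation produces the error $\int_J\int_{|x|>R}|\nabla P_{\le K}u|^2\dd x\dd t$, and for a merely $L^2$ almost periodic solution nothing controls this exterior kinetic energy.
\end{itemize}
Mass localization essentially gives only $\|\nabla P_{\le K}u(t)\|_{L^2(|x|>R)}\lesssim\eta K$. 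The resulting error bound $\eta^2K^2|J|$ is far larger than the available lower bound $\int_J N(t)^2\dd t\le|J|$. This is exactly the obstruction the present paper overcomes in Section~\ref{sec:kinetic}, and it needs $H^1$ data and the exterior frequency decay of Theorem~\ref{thm:exterior} to do so.

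Dodson's mechanism instead uses a Morawetz-type action controlled by $\|P_{\le K}u\|_{L_t^\infty\dot H_x^{1/2}}^2$, which long-time Strichartz makes $o(K)$. Its time derivative is bounded below by a quantity of size $N(t)^3$. The one-particle Lin--Strauss weight $x/|x|$ has exactly this scaling, and since there is no translation parameter it would suffice in the defocusing case.

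In the focusing case, however, its nonlinear contribution $-c\int|x|^{-b-1}|u|^{2+\frac{4-2b}{d}}\dd x$ has the wrong sign. It cannot be dominated by the remaining positive terms, which involve only angular derivatives and a lower-order term. The sharp inequality \eqref{eq:sharp} needs the full gradient, and for radial data the angular term vanishes.

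The missing idea is a frequency-localized Morawetz estimate at the $\dot H^{1/2}$ level that is positive definite when $M(u)<M(Q)$. An example is an interaction Morawetz functional in the spirit of Dodson's focusing work \cite{D15}, whose positivity comes from applying \eqref{eq:sharp} to spatially localized pieces of the solution. This rigidity step, rather than the long-time Strichartz estimate, is where your outline needs a genuinely new ingredient.
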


 We also recall the following long-time stability result, which will be used in the concentration-compactness argument in Section~\ref{sec:compactness}.
	\begin{proposition}[Long-time stability, {\cite[Proposition~2.3]{LMZ25}}]\label{prop:stability}
		Let $I$ contain $t_0$, and suppose an approximate solution $\widetilde u$ satisfies
		\[
		\|\widetilde u\|_{L_t^\infty L_x^2(I\times\R^d)}
		+\crit{\widetilde u}{I}\le A,
		\]
		and
		\[
		(i\partial_t+\Delta)\widetilde u+
		|x|^{-b}|\widetilde u|^{\frac{4-2b}{d}}\widetilde u=e.
		\]
		For every $A<\infty$ there is $\delta(A)>0$ such that, if
		\[
		\|u_0-\widetilde u(t_0)\|_{L_x^2}+\dual{e}{I}\le\delta\le\delta(A),
		\]
		then the exact solution with $u(t_0)=u_0$ exists throughout $I$, and
		\[
		\|u-\widetilde u\|_{L_t^\infty L_x^2(I\times\R^d)}
		+\crit{u-\widetilde u}{I}\le C(A)\delta.
		\]
	\end{proposition}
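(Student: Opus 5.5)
The plan is the standard perturbative argument at the $L^2$ level: a short-time stability estimate, followed by an iteration over a partition of $I$ into a number of subintervals depending only on $A$. Throughout, write $p=\frac{4-2b}{d}$, $F(u)=|x|^{-b}|u|^pu$, and let $S(J)$ denote the critical norm $\crit{\cdot}{J}$ and $N(J)$ the dual norm $\dual{\cdot}{J}$.

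\textbf{Nonlinear estimate.} First check that the pair $(q,r)=\bigl(\frac{2(d+4-2b)}{d},\frac{2(d+4-2b)}{d+2-2b}\bigr)$ is admissible; indeed $\frac2q+\frac dr=\frac d2$. Next use the pointwise bound $|F(u)-F(v)|\lesssim|x|^{-b}(|u|^p+|v|^p)|u-v|$. Applying the Lorentz H\"older inequality with $|x|^{-b}\in L^{d/b,\infty}$, $|u|^p\in L^{r/p,2/p}$ and $u-v\in L^{r,2}$, and checking the exponent identities $\frac bd+\frac{p+1}{r}=\frac{d+2}{2d}$ and $\frac{p+1}{q}=\frac12$, we get
\[
\dual{F(u)-F(v)}{J}\lesssim\bigl(S(J)[u]^p+S(J)[v]^p\bigr)\,S(J)[u-v].
\]
The second Lorentz index is admissible since $\frac12\le\frac p2+\frac12$.

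\textbf{Short-time stability.} Assume $S(J)[\widetilde u]\le\eta$ with $\eta$ small and $J=[t_1,t_2]$. Set $w=u-\widetilde u$ and write the Duhamel formula for $w$, with forcing $F(\widetilde u+w)-F(\widetilde u)-e$. The Strichartz estimates of Proposition~\ref{prop:strichartz}, combined with the nonlinear estimate above, give
\[
S(J)[w]\le C\|w(t_1)\|_{L^2}+C\,N(J)[e]+C\bigl(\eta^p+S(J)[w]^p\bigr)S(J)[w].
\]
The same bound holds for $\|w\|_{L^\infty_tL^2_x(J)}$. A continuity argument in the right endpoint then yields $S(J)[w]+\|w\|_{L^\infty L^2(J)}\le C(\|w(t_1)\|_{L^2}+N(J)[e])$, provided the right-hand side is at most a small $\delta_0$. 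Here I use that the local theory of Proposition~\ref{prop:local} guarantees $u$ exists as long as its $S$-norm stays finite. This also gives existence of $u$ on $J$, by the blowup criterion.

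\textbf{Iteration.} Since $S(I)[\widetilde u]\le A$, split $I$ into $K\lesssim(A/\eta)^q$ consecutive intervals $J_k$ on each of which $S(J_k)[\widetilde u]\le\eta$. Apply the short-time step successively forward and backward from $t_0$. At each step the initial error satisfies $\|w(t_k)\|_{L^2}\le C^k\delta$, so the errors grow geometrically. Choosing $\delta(A)$ so that $C^K\delta(A)\le\delta_0$ keeps the hypothesis valid on every piece. Summing over the pieces gives $\|w\|_{L^\infty L^2(I)}+S(I)[w]\le C(A)\delta$.

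\textbf{Main obstacle.} The main point is verifying the Lorentz--H\"older bookkeeping, so that the singular weight $|x|^{-b}$ is absorbed through $L^{d/b,\infty}$ with the output landing in $L^{\frac{2d}{d+2},2}$. The non-smoothness of $|u|^pu$ when $p<1$ is harmless here, because only the Lipschitz-type difference estimate at the $L^2$ level is needed and no derivative of the nonlinearity is ever taken.
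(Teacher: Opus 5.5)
Your proposal is correct. The Lorentz--H\"older bookkeeping checks out ($\frac bd+\frac{p+1}{r}=\frac{d+2}{2d}$, $\frac{p+1}{q}=\frac12$, second indices $\frac12\le\frac p2+\frac12$), and the short-time continuity argument, the blowup criterion and the geometric iteration over $K(A)$ pieces close the proof as you describe. The paper gives no proof of its own and simply cites \cite[Proposition~2.3]{LMZ25}, and your scheme is the standard argument behind such results. Your version never uses the hypothesis $\|\widetilde u\|_{L^\infty_tL^2_x}\le A$; this is harmless, because the initial discrepancy is measured directly in $L^2$.
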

 
 \section{Compactness of nonscattering solutions at the ground-state mass}
 \label{sec:compactness}

In this section, following the concentration-compactness framework of Kenig and Merle~\cite{KM06} and the argument in \cite[Section~3.1]{LMZ25}, we establish the almost periodicity modulo scaling of forward-global nonscattering solutions at the ground-state mass. More precisely, we prove the following proposition.

 \begin{proposition}[Compactness at the ground-state mass]
 	\label{prop:compactness}
 	Let $d\ge3$ and $\frac12<b<\min\{2,d/2\}$. Suppose that
 	$u\in C([0,\infty);H^1(\R^d))$ solves \eqref{eq:inls},
 	$M(u)=M(Q)$, and $u$ does not scatter forward in $L_x^2$.
 	Then there is a continuous function $N:[0,\infty)\to(0,\infty)$
 	such that
 	\[
 	\left\{N(t)^{-d/2}u\!\left(t,\frac{x}{N(t)}\right):t\ge0\right\}
 	\]
 	is precompact in $L_x^2$. 
 \end{proposition}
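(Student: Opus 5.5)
The plan is to follow the Kenig--Merle concentration-compactness scheme as in \cite[Section~3.1]{LMZ25}, adapted to the threshold mass. The key point is that the only noncompact symmetry of \eqref{eq:inls} in $L^2$ is scaling, since $|x|^{-b}$ breaks translations and Galilean boosts. By the standard equivalence between precompactness and \eqref{eq:APtails}, it suffices to show the following: for every sequence $t_n\ge0$ there are scales $\lambda_n>0$ such that $\lambda_n^{d/2}u(t_n,\lambda_n x)$ has a subsequence converging in $L^2$. Once this holds, $N(t)$ can be defined for instance as the inverse of the radius of the smallest ball about the origin containing half the mass of $u(t)$. One then checks that this choice, or a continuous modification of it via the continuity of $u$ in $L^2$, satisfies \eqref{eq:APtails}.

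Fix $t_n$. Since $u$ fails to scatter forward, Proposition~\ref{prop:local}(b) shows that the critical norm on both $[t_n,\infty)$ and $[0,t_n]$ \dots; more precisely, the forward critical norm on $[t_n,\infty)$ is infinite for every $n$. Apply the $L^2$ linear profile decomposition adapted to the inhomogeneous problem, as in \cite{LMZ25}, to $u(t_n)$:
\[
u(t_n)=\sum_{j=1}^J g_n^j e^{it_n^j\Delta}\phi^j+w_n^J .
\]
Here $g_n^j$ are $L^2$-scalings, and the profiles possibly carry translations $x_n^j$ and frequency parameters $\xi_n^j$. Mass decouples asymptotically, and $\limsup_n\|e^{it\Delta}w_n^J\|$ in the critical norm tends to $0$ as $J\to\infty$.

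Profiles that escape the origin, with $|x_n^j|/\lambda_n^j\to\infty$, or that carry frequency $|\xi_n^j|\lambda_n^j\to\infty$, see an effectively vanishing nonlinearity because of the weight $|x|^{-b}$. For these, the linear evolution is a good approximate solution with finite critical norm, which one verifies by the decay of $|x|^{-b}$ in $L^{d/b,\infty}$ together with stability. For all other profiles, use Proposition~\ref{prop:local}(d) when $t_n^j\to\pm\infty$ to build nonlinear profiles. If every profile has mass strictly less than $M(Q)$, Proposition~\ref{prop:subthreshold-scattering} shows the nonlinear profiles scatter with bounded critical norms. Decoupling of the nonlinear profiles in the critical norm, together with Proposition~\ref{prop:stability}, then gives $\|u\|<\infty$ in the critical norm on $[t_n,\infty)$, a contradiction. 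Hence there is exactly one profile, of full mass $M(Q)$, and $w_n^1\to0$ in $L^2$. Moreover it cannot be an escaping profile, so $x_n^1=0$ and $\xi_n^1=0$.

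It remains to exclude $t_n^1\to\pm\infty$. If $t_n^1\to+\infty$ (linear evolution toward the future), the forward critical norm of $e^{it\Delta}g_n e^{it_n^1\Delta}\phi$ on $[0,\infty)$ tends to $0$. Stability then makes $u$ small on $[t_n,\infty)$, which is a contradiction. If $t_n^1\to-\infty$, the backward norm on $(-\infty,0]$ is small, so the critical norm of $u$ on $[0,t_n]$ stays bounded uniformly in $n$. Here I use that $u$ is defined on $[0,\infty)$ and $t_n\ge0$. If $t_n$ is bounded, compactness is immediate from the continuity of $u$ in $L^2$. If $t_n\to\infty$, we obtain $\|u\|_{[0,\infty)}<\infty$, contradicting nonscattering. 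So $t_n^1\equiv0$ and $u(t_n)$ converges modulo scaling.

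The main obstacle I expect is the treatment of profiles in the inhomogeneous decomposition that do not concentrate at the origin, namely translated or boosted profiles. For these we need an approximation lemma showing that the nonlinear solution is well approximated by the linear flow, uniformly enough to feed into Proposition~\ref{prop:stability} with the dual norm $L_t^2L_x^{\frac{2d}{d+2},2}$. I would import this from \cite{LMZ25}, since it is mass-threshold independent. Note that only $L^2$ information is used here, so the hypotheses $H^1$ and $b>\frac12$ are not needed for this step.
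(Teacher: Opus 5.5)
Your proposal is correct and follows essentially the same route as the paper. Both apply the profile decomposition to $u(t_n)$, rule out multiple, subthreshold and escaping profiles via Proposition~\ref{prop:subthreshold-scattering}, Proposition~\ref{prop:escape} and stability, and then remove the time shifts; you only rearrange the order of these steps. Your handling of $t_n^1\to-\infty$ (small backward linear norm, hence a uniform bound for $u$ on $[0,t_n]$, contradicting nonscattering as $t_n\to\infty$) correctly spells out the step the paper dismisses as ``analogous''.
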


The proof of Proposition~\ref{prop:compactness} relies on two ingredients. The first is the linear profile decomposition, which allows us to analyze bounded sequences in $L^2$ in terms of asymptotically orthogonal profiles. We recall this result below.

 	\begin{proposition}[Linear profile decomposition, {\cite{LMZ25}}]\label{prop:profiles}
 	Let $u_n$ be a bounded sequence in $L^2(\mathbb{R}^d)$.  Then after passing to a subsequence if necessary,		
 there exist $J^*\in\mathbb{N}\cup\{\infty\}$; profiles $\phi^j\in  L^2 \setminus\{0\}$; scales $\lambda_n^j\in(0,\infty)$; space translation parameters $x_n^j\in\mathbb{R}^d$;  frequency translation parameters $\xi_n^j\in\mathbb{R}^d$; time translation parameters $t_n^j$; and remainders $r_n^J$ such that the following  decomposition holds for $1\leq J\leq J^*$:
 	\begin{equation}\label{eq:PS-profile-decomposition}
 		u_n(x)=\sum_{j=1}^J(\lambda_n^j)^{-d/2}e^{ix\cdot\xi_n^j}
 		[e^{it_n^j\Delta}\phi^j]\!\left(\frac{x-x_n^j}{\lambda_n^j}\right)+r_n^J(x),
 	\end{equation}
 		here  $r_n^J\in L^2_x(\mathbb{R}^d)$ obey
 		 	\begin{equation}	\label{eq:PS-free-remainder}
 			\lim_{J\to J^*}\limsup_{n\to\infty}\crit{e^{it\Delta}r_n^J}{\R}=0.
 		\end{equation}
 			Moreover, for any  $j\neq k$,
 		\begin{equation} 
 		\lim_{n\to\infty} \biggl\{|\log\bigl[\tfrac{\lambda_n^j}{\lambda_n^k}\bigr] |+ \tfrac{|x_n^j-x_n^k|^2}{\lambda_n^j\lambda_n^k} +\lambda_n^j\lambda_n^k|\xi_n^j-\xi_n^k|^2+ \tfrac{|t_n^j(\lambda_n^j)^2-t_n^k(\lambda_n^k)^2|}{\lambda_n^j\lambda_n^k}\biggr\} = \infty.\notag
 		\end{equation}
 		In addition, we may assume that  either  $ t_n^j\equiv 0$ or  $t_n^j\to\pm\infty$, either  $  x_n^j\equiv 0$ or  $\frac{|x_n^j|}{\lambda _n^j}\to\infty$, and that either  $\xi_n^j\equiv0$ or  $\lambda_n^j\xi_n^j\to \infty $.   
 		Furthermore, for any  $J\ge1$ we have the  mass decoupling property
 	\begin{equation}	\label{eq:PS-mass-decoupling}
 		M(u_n)=\sum_{j=1}^JM(\phi^j)+M(r_n^J)+o_n(1),
 	\end{equation}
 \end{proposition}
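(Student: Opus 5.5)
The plan is to follow the standard Merle--Vega / Carles--Keraani / B\'egout--Vargas scheme for $L^2$ profile decompositions of the Schr\"odinger propagator. The Lorentz-space critical norm is handled at the very end by interpolation against an ordinary Strichartz norm. Write $q_0=\frac{2(d+4-2b)}{d}$ and $r_0=\frac{2(d+4-2b)}{d+2-2b}$. One checks that $(q_0,r_0)\in\Lambda$ with $2<q_0<\frac{2(d+2)}{d}$ when $b>0$ (it is genuinely between the symmetric pair and the endpoint, though not endpoint since $q_0>2$).

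\textbf{Step 1: refined Strichartz and inverse Strichartz.} I first reduce everything to the symmetric norm $\|e^{it\Delta}f\|_{L^{2(d+2)/d}_{t,x}}$. Since $(q_0,r_0)$ is non-endpoint admissible, I pick a second admissible pair $(q_1,r_1)$ with $q_1$ closer to $2$. Real interpolation of the mixed-norm bounds then gives
\[
\|e^{it\Delta}f\|_{L^{q_0}_tL^{r_0,2}_x}\lesssim \|e^{it\Delta}f\|_{L^{2(d+2)/d}_{t,x}}^{\theta}\,\|f\|_{L^2}^{1-\theta}
\]
for some $\theta\in(0,1)$. Here the bound $\|e^{it\Delta}f\|_{L^{q_1}L^{r_1,2}}\lesssim\|f\|_{L^2}$ comes from Proposition~\ref{prop:strichartz}, and Lorentz spaces arise as real interpolation spaces in $x$ at fixed $t$. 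If the direct interpolation is awkward in the mixed norm, I would use H\"older in $t$ together with the Lorentz interpolation inequality $\|g\|_{L^{r_0,2}}\lesssim\|g\|_{L^{p_1}}^{\alpha}\|g\|_{L^{p_2}}^{1-\alpha}$ for $p_1<r_0<p_2$. With this reduction in hand, I invoke the refined Strichartz estimate, proved via bilinear restriction (Tao) in $d\ge3$:
\[
\|e^{it\Delta}f\|_{L^{2(d+2)/d}_{t,x}}\lesssim\|f\|_{L^2}^{1-\gamma}\Big(\sup_{Q}|Q|^{-\beta}\|e^{it\Delta}\widehat{f}\,\ind_Q\|_{L^{p}_{t,x}}\Big)^{\gamma},
\]
where $Q$ ranges over dyadic cubes in frequency and $p<\frac{2(d+2)}{d}$. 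From this I derive the inverse Strichartz statement. If $\|f_n\|_{L^2}\le A$ and $\|e^{it\Delta}f_n\|_{L^{q_0}L^{r_0,2}}\ge\epsilon$, then there exist parameters $(\lambda_n,x_n,\xi_n,t_n)$ such that, after undoing the symmetries, the sequence converges weakly in $L^2$ to some $\phi\ne0$ with $\|\phi\|_{L^2}\gtrsim A(\epsilon/A)^{C}$. The cube gives $\lambda_n^{-1}$ and $\xi_n$, and a point of near-maximal $L^\infty$ size of $e^{it\Delta}P_Q f_n$, obtained by H\"older between $L^p$ and $L^2$, gives $(t_n,x_n)$.

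\textbf{Step 2: iteration and orthogonality.} I extract $\phi^1$, set $r_n^1=f_n-g_n^1[e^{it_n^1\Delta}\phi^1]$ with $g_n$ the symmetry operator, and repeat on $r_n^J$. Weak convergence to zero of $(g_n^j)^{-1}r_n^j$ gives the mass decoupling \eqref{eq:PS-mass-decoupling} via the Pythagorean expansion. Because $\sum_j M(\phi^j)\le A^2$ and each extracted mass is bounded below polynomially in $\limsup_n\|e^{it\Delta}r_n^J\|$, I obtain \eqref{eq:PS-free-remainder}. Asymptotic orthogonality of parameters is proved by contradiction in the standard way: if two parameter sequences stayed comparable, then $(g_n^k)^{-1}g_n^j$ would converge strongly to a fixed symmetry, which contradicts weak nullity of the later remainder. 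The normalizations ($t_n^j\equiv0$ or $\to\pm\infty$, and so on) follow by passing to subsequences and absorbing bounded parameters into the profile. The only change from the classical case is that these are done in that order.

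\textbf{Main obstacle.} The only genuinely non-classical point is the passage between the Lorentz critical norm and the $L^{2(d+2)/d}_{t,x}$ norm in Step~1. I expect the interpolation inequality for mixed $L^q_tL^{r,2}_x$ norms to need some care. I would first try the fixed-time Lorentz interpolation combined with H\"older in time, using two Lebesgue admissible pairs that straddle $(q_0,r_0)$. All remaining steps transfer verbatim from the homogeneous mass-critical setting.
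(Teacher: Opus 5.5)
The paper does not prove this proposition; it quotes \cite{LMZ25}. Your outline (B\'egout--Vargas refined Strichartz, inverse Strichartz, iterative extraction, then interpolation into the Lorentz critical norm) is the standard route behind that result. However, Step~1 rests on a false claim about where $q_0$ lies. Since $q_0=2+\frac{8-4b}{d}$ and $\frac{2(d+2)}{d}=2+\frac4d$, you have $q_0<\frac{2(d+2)}{d}$ only when $b>1$. At $b=1$ the pair $(q_0,r_0)$ \emph{is} the symmetric pair, and for $0<b<1$ it lies between the symmetric pair and $(\infty,2)$.

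Interpolating the symmetric norm against a pair with $q_1$ closer to $2$ therefore covers only $b>1$. There it works, e.g.\ with $(q_1,r_1)=(2,\frac{2d}{d-2})$ and exponent $\theta=\frac{(d+2)(2-b)}{d+4-2b}$ on the symmetric norm. It misses in particular the range $\frac12<b\le1$ of the main theorem. The fixes are as follows.
\begin{itemize}
\item For $0<b<1$, interpolate toward $(\infty,2)$ instead. Lorentz interpolation at fixed time between $L^2_x$ and $L^{2(d+2)/d}_x$, followed by H\"older in $t$, gives $\|e^{it\Delta}f\|_{L^{q_0}_tL^{r_0,2}_x}\lesssim\|f\|_{L^2}^{1-\theta}\|e^{it\Delta}f\|_{L^{2(d+2)/d}_{t,x}}^{\theta}$ with $\theta=\frac{d+2}{d+4-2b}$.
\item At $b=1$ the symmetric pair cannot serve as an interpolation endpoint. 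The $L^{r_0,2}$ bound is stronger than the $L^{r_0}$ bound (since $r_0>2$), so it needs Lebesgue exponents strictly on both sides of $r_0$. Use two admissible pairs straddling $(q_0,r_0)$. Bound one factor by Strichartz and the other by H\"older between $L^{2(d+2)/d}_{t,x}$ and $L^\infty_tL^2_x$.
\end{itemize}
With this correction the reduction to the classical inverse Strichartz inequality works for all $0<b<\min\{2,\frac d2\}$.

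A second, subtler point: the displayed orthogonality relation has no Galilean correction in its spatial term, and your ``standard'' contradiction argument does not produce it as stated. Boundedness of the four displayed quantities does not force $e^{-it_n^k\Delta}(g_n^k)^{-1}g_n^je^{it_n^j\Delta}$ to converge strongly. For example, take $\lambda_n^j=\lambda_n^k=1$, $x_n^j=x_n^k=0$, $t_n^j=t_n^k=t_n\to\infty$, $\xi_n^k=ne_1$, $\xi_n^j=ne_1+e_2$. These satisfy all the normalizations and make the displayed sum identically $1$. Yet the operator above is $e^{-it_n\Delta}e^{ix\cdot e_2}e^{it_n\Delta}$, which sends $\phi$ to a unimodular constant times $e^{ix\cdot e_2}\phi(x+2t_ne_2)\rightharpoonup0$.

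The standard argument only gives divergence of the sum in which $x_n^j-x_n^k$ is shifted by the Galilean drift $2t_n^j(\lambda_n^j)^2(\xi_n^j-\xi_n^k)$. To reach the displayed form you need a reparametrization step:
\begin{itemize}
\item Group the profiles into classes on which $\log(\lambda_n^j/\lambda_n^k)$ and $\lambda_n^j\lambda_n^k|\xi_n^j-\xi_n^k|^2$ stay bounded.
\item Within each class, use the Galilean transformation to replace every frequency parameter by that of a fixed representative. This absorbs the bounded relative frequency into the profile and converts it into a spatial shift; the $o_{L^2}(1)$ errors go into $r_n^J$.
\end{itemize}
This preserves the normalizations and makes the corrected and uncorrected spatial terms coincide within each class; across classes, the scale or frequency term already diverges. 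The same drift is why, when $t_n^j\to\pm\infty$, the frequency parameter must be normalized before the spatial one (or the spatial one renormalized afterwards).
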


 We also need the following result, which ensures the existence of scattering solutions for profiles whose spatial or frequency parameters escape to infinity.
 
 \begin{proposition}[Escaping profiles, {\cite[Proposition~2.4]{LMZ25}}]\label{prop:escape}
 	 Let   $x_n\in \mathbb{R}^d$ and $\xi_n\in\mathbb{R}^d$ satisfy
 	 \begin{equation}
 	 	\lim_{n\rightarrow \infty }\left(|x_n|+|\xi_n|\right)=\infty .\notag
 	 \end{equation}
 	 Let $\phi\in L^2(\mathbb{R}^d)$ and define
 	 \begin{equation}
 	 	\phi_n(x) =: e^{ix\cdot \xi_n}\phi (x-x_n). \notag
 	 \end{equation}
 	 Then there exists a subsequence $n_k$ and a global scattering solution $v_{n_k}$ to (\ref{eq:inls}) satisfying 
 	 \begin{equation}
 	 	v_{n_k}(0)=\phi _{n_k}\quad\text{and}\quad  \|v_{n_k}\|_{L_t^{\frac{2(d+4-2b)}{d}}L_x^{\frac{2(d+4-2b)}{d+2-2b},2}(\mathbb{R} \times \mathbb{R} ^d)}\lesssim 1,\notag
 	 \end{equation}
 	 where the implicit constant is independent of  $k$. 
 \end{proposition}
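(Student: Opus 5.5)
The statement to prove is Proposition~\ref{prop:escape}. I will treat it as a nonlinear-profile result and show that when $|x_n|\to\infty$ or $|\xi_n|\to\infty$, the inhomogeneous nonlinearity becomes negligible along the linear flow of $\phi_n$. Concretely, I will set $\widetilde v_n(t)=e^{it\Delta}\phi_n$, or a truncated version of it, as an approximate solution and close with the long-time stability result, Proposition~\ref{prop:stability}.

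First I reduce to nice data. By density I may assume $\phi\in C_c^\infty$ with $\widehat\phi$ compactly supported, after approximating in $L^2$; the approximation error is absorbed by stability, since the linear flows stay close in the critical norm by Proposition~\ref{prop:strichartz}. I then pass to a subsequence along which either $|x_n|\to\infty$ with $\xi_n$ bounded, or $|\xi_n|\to\infty$.

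\emph{Case $|\xi_n|\to\infty$.} By the Galilean identity, $e^{it\Delta}\phi_n(x)=e^{ix\xi_n-it|\xi_n|^2}[e^{it\Delta}\phi](x-x_n-2t\xi_n)$. So the linear solution is a wave packet moving with speed $2|\xi_n|\to\infty$; it spends only time $O(|\xi_n|^{-1})$ near the origin, where $|x|^{-b}$ is large. I set $\widetilde v_n=e^{it\Delta}\phi_n$. Its critical norm is bounded by $\|\phi\|_{L^2}$, and the error is $e_n=-|x|^{-b}|\widetilde v_n|^{\frac{4-2b}{d}}\widetilde v_n$. I must show $\|e_n\|_{L_t^2L_x^{\frac{2d}{d+2},2}}\to0$, and I will split in time.
\begin{enumerate}
\item For $|t|\le T$ and $|x_n+2t\xi_n|\le R$, the time set has measure $O(R/|\xi_n|)$. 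On this set I bound the error in $L_x^{\frac{2d}{d+2},2}$ by Lorentz H\"older together with Bernstein, using $|x|^{-b}\in L^{d/b,\infty}$ and an $L_x^\infty$ bound on the compactly supported, smooth profile. This gives a contribution $O((R/|\xi_n|)^{1/2})\to0$.
\item Elsewhere, with $|t|\le T$, the packet is at distance $\ge R$ from the origin (up to Schwartz tails), so $|x|^{-b}\le R^{-b}$ there. This makes the error small by H\"older in the critical exponents.
\item For $|t|>T$, dispersive decay of $e^{it\Delta}\phi$ makes the error tail small uniformly in $n$.
\end{enumerate}
Letting $n\to\infty$, then $R,T\to\infty$, gives the desired smallness.

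\emph{Case $\xi_n$ bounded, $|x_n|\to\infty$.} I may absorb $\xi_n$ into $\phi$ along the subsequence. Here the packet $e^{it\Delta}\phi(\cdot-x_n)$ sits at distance $\sim|x_n|$ from the origin for $|t|\le T$, so the weight is $\lesssim|x_n|^{-b}$ and the error on that window is small. For $|t|>T$ I again use dispersion and the critical-norm tail. The crude bound $|x|^{-b}\lesssim|x_n|^{-b}$ is valid only on the bulk of the packet; on the remaining region I use Lorentz H\"older with $|x|^{-b}\in L^{d/b,\infty}$ together with the smallness of the Schwartz tail of $\phi$ there.

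Once $\|e_n\|\to0$ and the data match exactly, Proposition~\ref{prop:stability} yields a global solution $v_n$ with $v_n(0)=\phi_n$ and critical norm $\lesssim 1+\|\phi\|_{L^2}$, uniformly in $n$. Scattering then follows from Proposition~\ref{prop:local}(b).

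I expect the main obstacle to be the error estimate near the origin in the dual Lorentz norm $L_t^2L_x^{\frac{2d}{d+2},2}$. The singular weight has to be placed in $L^{d/b,\infty}$, and the remaining factors must go in Lorentz spaces whose exponents sum correctly while still gaining from the short residence time. Choosing these exponents, admissible for $b<\min\{2,d/2\}$, is the delicate bookkeeping. For $|t|>T$, I will need an $L^\infty$-type dispersive bound on the smooth profile to beat the weight near $0$.
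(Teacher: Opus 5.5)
The paper gives no proof of this statement: it is quoted from \cite[Proposition~2.4]{LMZ25} and only applied in Section~\ref{sec:compactness}, so there is no in-paper argument to compare with. Your route is the standard proof of such embedding results, and it is sound: take the free evolution of a nice approximant as the approximate solution and close with Proposition~\ref{prop:stability}. The smallness comes from the short residence time near the origin when $|\xi_n|\to\infty$, and from the decay of $|x|^{-b}$ along the packet when $|x_n|\to\infty$. Three points need correcting.

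First, no nonzero function lies in $C_c^\infty$ and also has compactly supported Fourier transform, so reduce instead to Schwartz (or $C_c^\infty$) data $\phi$. Bernstein is not needed anywhere. In step (i), Lorentz--H\"older with $|x|^{-b}\in L^{d/b,\infty}$ and the Sobolev embedding $H^1\subset L^{r,2}$, where $r=\frac{2(d+4-2b)}{d+2-2b}$, give $\|e_n(t)\|_{L_x^{\frac{2d}{d+2},2}}\lesssim\|\phi\|_{H^1}^{1+\frac{4-2b}{d}}$ uniformly in $t$ and $n$. That is all you need on a time set of measure $O(R/|\xi_n|)$.

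Second, in step (ii), and in the bulk region of your second case, the gain cannot come from H\"older in critical exponents. Indeed $\||x|^{-b}\ind_{\{|x|>R\}}\|_{L^{d/b,\infty}}=\||x|^{-b}\|_{L^{d/b,\infty}}$ for every $R$. You must use the pointwise bound $|x|^{-b}\le(R/2)^{-b}$ and then control $e^{it\Delta}\phi$ in a noncritical space with spatial exponent $\frac{2(d+4-2b)}{d+2}$. This exponent is below $2$ when $b>1$, so the norm is finite uniformly only for $|t|\le T$ and only because $\phi$ is Schwartz. This is exactly why both the finite time window and the reduction to nice data are indispensable. The part of the packet reaching $\{|x|<R/2\}$ is then handled, as you say, by the critical Lorentz bound together with the Schwartz tail.

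Third, for $|t|>T$ no dispersive $L^\infty$ bound is needed. Since $|e^{it\Delta}\phi_n(x)|=|[e^{it\Delta}\phi](x-x_n-2t\xi_n)|$ and the $L_x^{r,2}$ norm is translation invariant, the critical nonlinear estimate gives
\[
\dual{|x|^{-b}|e^{it\Delta}\phi_n|^{\frac{4-2b}{d}}e^{it\Delta}\phi_n}{\{|t|>T\}}
\lesssim\crit{e^{it\Delta}\phi}{\{|t|>T\}}^{1+\frac{4-2b}{d}}.
\]
The right-hand side tends to zero as $T\to\infty$, uniformly in $n$, in both cases. With these adjustments, fix $T$, then $R$, then take $n$ large. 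This yields $\dual{e_n}{\R}\to0$, and the rest of your argument goes through.
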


 With these two results at hand, we now prove Proposition~\ref{prop:compactness}.

\begin{proof}[Proof of Proposition~\ref{prop:compactness}]
	Let $t_n\to\infty$ be an arbitrary sequence and set $u_n(t)=u(t_n+t)$. Since $u$ does not scatter forward in $L_x^2$, we have
	\begin{equation}\label{eq:PS-blowup}
		\crit{u_n}{[0,\infty)}\longrightarrow\infty\quad\text{as}\quad n\rightarrow\infty .
	\end{equation}
	
	Applying Proposition~\ref{prop:profiles} to the bounded sequence $u_n(0)$ and passing to a subsequence, we obtain the linear profile decomposition \eqref{eq:PS-profile-decomposition}, together with \eqref{eq:PS-free-remainder} and \eqref{eq:PS-mass-decoupling}.
	
	Our goal is to show that exactly one profile survives, that its time, spatial, and frequency translation parameters can be taken to be zero, and that it carries the entire mass of the solution. We establish these facts in the following three steps.

	\begin{enumerate}
		\item There is exactly one nonzero profile in
		\eqref{eq:PS-profile-decomposition}, so that
		\begin{equation}\label{eq:singleprofile}
			u_n(0,x)=\lambda_n^{-d/2}e^{ix\cdot\xi_n}
			[e^{it_n\Delta}\phi]\!\left(\frac{x-x_n}{\lambda_n}\right)
			+r_n(x).
		\end{equation}
		\item The parameters in \eqref{eq:PS-profile-decomposition} satisfy $(t_n,x_n,\xi_n)\equiv (0,0,0)$.
		\item The remainder satisfies $\|r_n\|_{L_x^2}\to0$ and
		$M(\phi)=M(Q)$.
	\end{enumerate}

\medskip
\noindent\textit{Proof of (i).}
We first show that the decomposition contains exactly one nonzero profile.
	If there is no nonzero profile, then
	\eqref{eq:PS-free-remainder} implies
	\[
	\crit{e^{it\Delta}u_n(0)}{\R}\longrightarrow0.
	\]
	By the small-data global well-posedness and scattering theory in Proposition~\ref{prop:stability}, the solutions $u_n$ then have finite critical spacetime norms for all sufficiently large $n$, contradicting \eqref{eq:PS-blowup}. Hence $J^*\ge1$.
	
	Suppose that $J^*\ge2$. Since all the profiles are nonzero,
	\eqref{eq:PS-mass-decoupling} implies
	\[
	0<M(\phi^j)<M(Q)\qquad(1\le j\le J^*).
	\]
	
	We now construct nonlinear profiles corresponding to the linear profiles in
	\eqref{eq:PS-profile-decomposition}. We distinguish the following three cases.

\medskip
\noindent\textit{Case 1:}
$t_n^j\to\pm\infty$.

 By Proposition~\ref{prop:local}, we may choose a global scattering solution $v^j$ to \eqref{eq:inls} such that
 \[
 \|v^j(t)-e^{it\Delta}\phi^j\|_{L_x^2}\to0
 \quad\text{as }t\to\pm\infty.
 \]
 We then define
 \[
 v_n^j(t,x)
 =(\lambda_n^j)^{-d/2} e ^{ix\cdot \xi_n^j}
 v^j\left(\frac{t}{(\lambda_n^j)^2}+t_n^j,
 \frac{x-x_n^j}{\lambda_n^j}\right).
 \]
 
	\medskip
\noindent\textit{Case 2:}
$t_n^j\equiv0$, $x_n^j\equiv0$; and $\xi_n^j\equiv0$.

Let $v^j$ be the solution to \eqref{eq:inls} with initial data
$v^j(0)=\phi^j$. Since $M(\phi^j)<M(Q)$, Proposition~\ref{prop:subthreshold-scattering}
implies that $v^j$ is global and scatters in both time directions.
We define
\[
v_n^j(t,x)
=(\lambda_n^j)^{-d/2}
v^j\left(\frac{t}{(\lambda_n^j)^2},
\frac{x}{\lambda_n^j}\right).
\]

\noindent\textit{Case 3:}   $t_n^j\equiv0$; and  
$\frac{|x_n^j|}{\lambda_n^j}\to\infty$ or
$\lambda_n^j|\xi_n^j|\to\infty$.

After passing to a subsequence, Proposition~\ref{prop:escape} implies that, for all sufficiently large $n$, the solutions $w_n^j(t,x)$ to \eqref{eq:inls} with initial data
\[
w_n^j(0)=e^{ix\cdot\lambda_n^j\xi_n^j}
\phi^j\left(x-\frac{x_n^j}{\lambda_n^j}\right)
\]
exist globally and scatter. We then define
\[
v_n^j(t,x)
=(\lambda_n^j)^{-d/2}
e^{ix_n^j\cdot\xi_n^j}
w_n^j\left(
\frac{t}{(\lambda_n^j)^2},
\frac{x}{\lambda_n^j}
\right).
\]

	\medskip
	Combining the nonlinear profiles constructed above, we define
	\[
	u_n^J(t,x)
	=\sum_{j=1}^Jv_n^j(t,x)+e^{it\Delta}r_n^J(x).
	\]

	By the construction of $u_n^J$ and the profile decomposition \eqref{eq:PS-profile-decomposition}, we have
	\begin{equation}
		\lim_{n\rightarrow \infty }\|u_n^J(0)-u_n(0)\|_{L^2}=0.\notag
	\end{equation}
	Moreover, \cite[Lemma~3.1]{LMZ25} gives
	\begin{equation} 
		\limsup_{J\to J^*}\limsup_{n\to\infty}
		\left(\|u_n^J\|_{L_t^\infty L_x^2(\R\times\R^d)}
		+\crit{u_n^J}{\R}\right)\lesssim 1,\notag
	\end{equation}
	and
	\begin{equation} 
		\lim_{J\to J^*}\limsup_{n\to\infty}
		\dual{(i\partial_t+\Delta)u_n^J+
			|x|^{-b}|u_n^J|^{\frac{4-2b}{d}}u_n^J}{\R}=0.\notag
	\end{equation}
	Hence, we can apply the stability result in Proposition~\ref{prop:stability} to obtain
	\[
	\limsup_{n\to\infty}\crit{u_n}{\R}<\infty,
	\]
which	contradicts\eqref{eq:PS-blowup}. Thus $J^*=1$, and \eqref{eq:singleprofile} follows.

\medskip
\noindent\textit{Proof of (ii).}
Having established the existence of a unique nonzero profile, we now show that its time, spatial, and frequency translation parameters can all be eliminated.

	Since $J^*=1$, the decomposition \eqref{eq:PS-profile-decomposition} reduces to \eqref{eq:singleprofile}, where either $t_n\equiv0$ or $t_n\rightarrow \pm \infty$, either $x_n\equiv0$ or $\frac{|x_n|}{\lambda _n}\rightarrow \infty$, and either $\xi_n\equiv 0$ or $\lambda_n\xi_n\rightarrow\infty$.
	
	We first show that the time shifts can be taken to be zero. Suppose that $t_n\to\infty$. Then
	\begin{equation}
		e^{it\Delta }u_{n}(0,x)=\lambda_n^{-\frac{d}{2}}e^{-it|\xi_n|^2}e^{ix\cdot\xi_n}\left(e^{i\frac{t}{\lambda_n^2}\Delta }e^{it_n\Delta }\phi\right)(\frac{x-2\xi_n t-x_n}{\lambda_n}),\notag
	\end{equation}
	and the corresponding critical spacetime norm tends to zero:
	\begin{equation}
		\|e^{it\Delta }u_{0,n}\|_{L^ {\frac{2(d+4-2b)}{d}} _tL_x^{\frac{2(d+4-2b)}{d+2-2b},2}([0,\infty )\times \mathbb{R} ^d)}\lesssim  \|e^{it\Delta }\phi\|_{L^ {\frac{2(d+4-2b)}{d}}_tL_x^{ \frac{2(d+4-2b)}{d+2-2b},2}([t_n,\infty )\times \mathbb{R} ^d)}\rightarrow0 \notag
	\end{equation}
	The corresponding free evolutions thus provide suitable approximate solutions with uniformly bounded global critical spacetime norms for sufficiently large $n$. The stability result would then yield uniform bounds for the exact solutions, contradicting \eqref{eq:PS-blowup}. The case $t_n\rightarrow-\infty$ is analogous. Hence \eqref{eq:singleprofile} reduces to
	\begin{equation}
		u_n(0,x)=\lambda_n^{-\frac{d}{2}}e^{ix\cdot \xi_n} \phi(\frac{x-x_n}{\lambda_n})+r_n(x), \label{716w4}
	\end{equation}
	where either $x_n\equiv0$ or $\frac{|x_n|}{\lambda _n}\rightarrow \infty$, and either $\xi_n\equiv 0$ or $\lambda_n|\xi_n|\rightarrow\infty$.
	
	We next show that the spatial and frequency translation parameters can also be taken to be zero. Suppose that $\frac{|x_n|}{\lambda _n}\to\infty$ or $\lambda_n|\xi_n| \rightarrow\infty$. By Proposition~\ref{prop:escape}, there exist global scattering solutions $v_n$ to \eqref{eq:inls} with initial data
	$$v_n(0)= e^{i\lambda_n x\cdot\xi_n}\phi(x-\frac{x_n}{\lambda_n}).$$
	By scaling, $\lambda_n^{-\frac{d}{2}}v_n(\lambda_n^{-2}t,\lambda_n^{-1}x)$ are also global scattering solutions to \eqref{eq:inls}, with initial data $\lambda_n^{-\frac{d}{2}}e^{ix\cdot \xi_n} \phi(\frac{x-x_n}{\lambda_n})$.
	Applying the stability result in Proposition~\ref{prop:stability}, we again obtain bounds for $u_n$ that contradict \eqref{eq:PS-blowup}. Consequently, \eqref{716w4} reduces to
	\begin{equation}
		u_n(0,x)=\lambda_n^{-\frac{d}{2}}  \phi(\frac{x}{\lambda_n})+r_n(x).\notag
	\end{equation}

\medskip
\noindent\textit{Proof of (iii).}
It remains to show that the unique profile carries the full ground-state mass and that the remainder converges to zero in $L_x^2$.

	It sufficies to show that $M(\phi)=M(Q)$ and
	\begin{equation}
		\|r_n(x)\|_{L^2}\rightarrow0\quad\text{as}\quad  n\rightarrow+\infty .\notag
	\end{equation}
	Suppose otherwise. If $M(\phi)<M(Q)$, then Proposition~\ref{prop:subthreshold-scattering} implies that the solution $v$ to \eqref{eq:inls} with initial data $\phi$ scatters. Moreover,
	\begin{equation}
		v_n(t,x)=:\lambda_n^{-\frac{d}{2}}v\left(\frac{t}{\lambda_n^2},+\frac{x}{\lambda_n}\right)\notag
	\end{equation}
	provide suitable approximate solutions with uniformly bounded global critical spacetime norms for sufficiently large $n$. Applying the stability result in Proposition~\ref{prop:stability}, we obtain uniform bounds for $u_n$, contradicting \eqref{eq:PS-blowup}.
	Therefore $\|r_n\|_{L_x^2}\to0$ and $M(\phi)=M(Q)$.

Therefore, combining (i), (ii), and (iii) established above, we conclude that, for any sequence $t_n\rightarrow\infty$, after passing to a subsequence, there exist scales $\lambda_n>0$ and a function $\phi\in L_x^2$ with $M(\phi)=M(Q)$ such that
\begin{equation} 
	\lambda_n^{d/2}u(t_n,\lambda_n x)\longrightarrow\phi(x)
	\quad\text{in }L_x^2.\notag
\end{equation}
Thus, every sequence of times tending to infinity admits a subsequence along which the corresponding rescaled solutions converge strongly in $L_x^2$. Together with the strong continuity of $u$ on bounded time intervals, this yields precompactness modulo scaling, as claimed.
$   $ 
\end{proof}

	\section{Exterior frequency decay}\label{sec:exterior}
	
	In this section, we establish an exterior frequency decay estimate for solutions satisfying the forward reduced Duhamel formula \eqref{eq:reduced}. This estimate will play a key role in the kinetic-energy localization argument in Section~\ref{sec:kinetic}. The proof combines an incoming/outgoing decomposition of the free Schr\"odinger evolution with estimates for the nonlinear terms in the interior and exterior regions.
	
	Throughout this section, let $u\in C([0,\infty);H^1)$ be a strong solution of \eqref{eq:inls} satisfying \eqref{eq:reduced}. We do not assume $M(u)=M(Q)$ or $\sup_{t\ge0}\|\nabla u(t)\|_{L_x^2}<\infty$. The implicit constants may depend on the initial data and fixed exterior radii, but are independent of time.

	\begin{theorem}[Exterior frequency decay]\label{thm:exterior}
		Let $d\ge3 $ and $ \frac12<b<\min\left\{2,\frac d2\right\}$. For every fixed $r>0$,
		\begin{equation}\label{eq:exterior}
			\sup_{t\ge0}\|\varphi_{>r}P_\nu u(t)\|_{L_x^2}
			\le C_{u,r}\nu^{-1-\varepsilon}+C\|P_\nu u_0\|_{L_x^2},
			\qquad \nu\ge1,
		\end{equation}
		where $\varepsilon>0$ is given by \eqref{eq:epsilon}.
	\end{theorem}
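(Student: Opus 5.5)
The plan follows the incoming/outgoing strategy of Killip--Li--Visan--Zhang, adapted to nonradial data with smooth angular cutoffs in phase space. Fix $r>0$, $t\ge0$ and a dyadic $\nu\ge1$.

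\textbf{Step 1: phase-space decomposition.} Split $\varphi_{>r}P_\nu$ into an outgoing piece $P^+_\nu$ and an incoming piece $P^-_\nu$. The outgoing symbol is concentrated where $x\cdot\xi\ge -\tfrac12|x||\xi|$, and the incoming one on the complementary cone. Both are built from smooth angular cutoffs in $(x/|x|,\xi/|\xi|)$, multiplied by $\varphi_{>r}(x)$ and $\widetilde P_\nu$.

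\textbf{Step 2: choice of representation and kernel bounds.} For the outgoing part we use the forward reduced Duhamel formula \eqref{eq:reduced}, i.e.\ the integral $\int_0^\infty e^{-i\tau\Delta}(\cdots)$. For the incoming part we use the backward Duhamel formula from time $0$,
\[
u(t)=e^{it\Delta}u_0+i\int_0^t e^{i(t-s)\Delta}F(s)\,ds ,\qquad F=|x|^{-b}|u|^{\frac{4-2b}{d}}u .
\]
Its free term contributes $C\|P_\nu u_0\|_{L^2}$, and this is why the estimate is uniform in time only up to that term. In both cases the propagator kernel of $P^\pm_\nu e^{\mp i\tau\Delta}$, restricted to the non-stationary region, satisfies non-stationary phase bounds of the form
\[
|K(x,y)|\lesssim_m \nu^{d}\,\langle \nu|x-y|+\nu^2\tau\rangle^{-m}
\qquad\text{whenever } |y|\lesssim |x|+\nu\tau .
\]
The remaining stationary region is $|y|\gtrsim |x|+\nu\tau$.

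\textbf{Step 3: interior forcing ($|y|\lesssim 1$).} Here the kernel decay gives $\nu^{-m}$ gains, but $F\notin L^1$ uniformly near the origin when $d=3,4$. For these dimensions I integrate by parts in time: write $F=-(i\partial_t+\Delta)u$ and move $\partial_\tau$ onto the propagator. This leaves a boundary term plus a commutator $[\Delta,\chi]u=2\nabla\chi\cdot\nabla u+(\Delta\chi)u$, which is in divergence form, so its derivatives can be placed on the kernel. Only mass is then needed, and the result is $O_r(\nu^{-m})$.

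\textbf{Step 4: exterior forcing (the main obstacle) and bootstrap.} In the stationary region we have $|y|\gtrsim r+\nu\tau$, so the weight satisfies $|y|^{-b}\lesssim(r+\nu\tau)^{-b}$. The bootstrap hypothesis is
\[
\sup_t\|\varphi_{>r/2}P_\mu u\|_{L^2}\le A\mu^{-s}+C\|P_\mu u_0\|_{L^2}\quad\text{for all dyadic }\mu .
\]
I frequency-localize $F$. Up to mismatch errors (Lemma~\ref{lem:mismatch}), only $P_{\gtrsim\nu}$ of some factor matters, and on that factor I use the hypothesis. The other factors are controlled by mass through a Lorentz H\"older estimate and Bernstein. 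I then apply the dual Strichartz estimate (Proposition~\ref{prop:strichartz}) on dyadic blocks $\tau\sim 2^k\nu^{-2}$, combined with local-in-time $L^2$ bounds from local constancy. Summing in $\tau$ should give a gain $\nu^{-2\varepsilon}$ through the factor $\int (r+\nu\tau)^{-b}\ldots\,d\tau$, and the condition $b>1/2$ is exactly what makes this integral converge (cf.\ \eqref{eq:outer-smallb}). This improves $s$ to $\min\{s+2\varepsilon,\,1+\varepsilon\}$.

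The base case $s=0$ follows from mass. After finitely many iterations, each shrinking $r$ slightly (say $r\to r/2^{k}$) to absorb the mismatch errors, we reach \eqref{eq:exterior}. The delicate points are: tracking the $P_\nu u_0$ term through the iteration, which should be harmless since it enters linearly via Strichartz; and getting the exact exponent $\varepsilon$ in \eqref{eq:epsilon}, which comes from optimizing the H\"older split between the weight decay and the dual Strichartz pair. The split changes at $b=1$.
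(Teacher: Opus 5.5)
Your architecture matches the paper's proof: the angular incoming/outgoing split, the reduced Duhamel formula for $\Pi^+_{\nu,r}$ and Duhamel from time $0$ for $\Pi^-_{\nu,r}$, integration by parts in time with a divergence-form commutator near the origin, and an exterior bootstrap driven by the factor $(r+\nu\tau)^{-b}$ in a dual Strichartz norm, split at $b=1$.

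\textbf{The gap is in Step~4, where you localize the nonlinearity in frequency.} ``Only $P_{\gtrsim\nu}$ of some factor matters'' is the paraproduct heuristic for polynomial nonlinearities. Here $F_0(z)=|z|^{\frac{4-2b}{d}}z$ is not polynomial, so $P_\nu\bigl[(1-\eta(x/R))|x|^{-b}F_0(P_{\le\nu}u)\bigr]$ is not a mismatch error. The paper writes $F_0(u)=[F_0(u)-F_0(P_{\le\nu}u)]+F_0(P_{\le\nu}u)$.
\begin{enumerate}
\item The difference is pointwise $\lesssim|P_{>\nu}u|(|u|+|P_{\le\nu}u|)^{\frac{4-2b}{d}}$, and you handle it correctly.
\item The second piece needs $\|P_\nu G\|_{L^p}\lesssim\nu^{-1}\|\nabla G\|_{L^p}$, which puts a derivative on $P_{\le\nu}u$ in the region $|x|\ge R$. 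If you bound that gradient by mass and Bernstein, you get $\nu^{-1}\cdot\nu$. The term is then $O(R^{-b})$ with no decay in $\nu$, and your iteration stalls at $s=2\varepsilon$.
\end{enumerate}
The missing ingredient is the exterior low-frequency gradient bound \eqref{eq:locallow}, $\|\varphi_{>a}\nabla P_{\le\nu}u\|_{L^2}\lesssim\nu^{1-s}$. It applies the bootstrap hypothesis a second time, at every dyadic $\mu\le\nu$, using spatial mismatch for the part of $P_\mu u$ near the origin. It then sums $\sum_{\mu\le\nu}\mu^{1-s}\lesssim\nu^{1-s}$. This summation is where the restriction $s<1$ enters, which is why the paper iterates only to $s=1-\varepsilon$ and then takes one final step. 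Combined with $|\nabla[(1-\eta(x/R))|x|^{-b}]|\lesssim R^{-b-1}$ on $|x|\ge R$, it yields the pointwise-in-time bound \eqref{eq:outer-frequency}.

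\textbf{What follows once \eqref{eq:outer-frequency} is in hand.} You need neither local constancy nor dyadic time blocks. Mass conservation already gives every bound uniformly in time, and $N(t)$ plays no role. Set $R=(r+\nu\tau)/64$ and apply dual Strichartz globally in $\tau$.
\begin{enumerate}
\item For $b\ge1$, use the dual pair $(\frac2b,\frac{2d}{d+4-2b})$ and $\|(r+\nu\tau)^{-b}\|_{L^{2/b}_\tau}\lesssim_r\nu^{-b/2}$. This gives $\varepsilon=b/4$.
\item For $\frac12<b<1$, apply Bernstein into $L^{\frac{2d}{d+2}}$ at a cost of $\nu^{1-b}$, then use the endpoint pair and $\|(r+\nu\tau)^{-b}\|_{L^2_\tau}\lesssim_r\nu^{-1/2}$. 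This norm is finite because $b>\frac12$, and it gives $2\varepsilon=b-\frac12$.
\end{enumerate}

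\textbf{Your fixed interior cutoff can also be made to work, with two adjustments.} First, its radius must be $\lesssim r$ rather than $\lesssim1$, since the kernel bound near $\tau=0$ requires $|y|\le r/16$. Second, you must treat the shell $cr\lesssim|y|\lesssim c(r+\nu\tau)$, which your plan leaves implicit. Handle it with the kernel bound and $\|F\|_{L^1(|y|\sim\rho)}\lesssim\rho^{(d-4)/2}$, which follows from mass. The paper avoids this extra region, and a separate $d\ge5$ case, by using the moving cutoff $\eta(64x/(r+\nu\tau))$ in every dimension. The price is only an integrable $\partial_\tau\eta_\tau$ error.
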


The proof of Theorem~\ref{thm:exterior} is organized as follows. In the first subsection, we introduce the incoming/outgoing decomposition and establish the corresponding nonstationary propagation estimates. In the second subsection, we estimate the interior and exterior nonlinear contributions, using integration by parts in time for the former and a frequency bootstrap argument for the latter. Finally, in the third subsection, we combine these estimates to complete the proof of Theorem~\ref{thm:exterior}.

\subsection{Incoming/outgoing decomposition and nonstationary estimates}

We begin by decomposing the frequency-localized solution into incoming and outgoing components. This decomposition allows us to exploit the direction of free Schr\"odinger propagation when estimating the nonlinear Duhamel terms. Unlike the radial incoming/outgoing decompositions, the operators introduced below are defined for general, not necessarily radial, functions.

	Choose $\rho_\pm\in C^\infty([-1,1])$ satisfying
	\[
	0\le\rho_\pm\le1,\quad \rho_++\rho_-=1,\quad
	\rho_+(s)=0\ (s\le-1/2),\quad
	\rho_-(s)=0\ (s\ge1/2).
	\]
	With $p(\zeta)=\varphi(\zeta)-\varphi(2\zeta)$, define
	\begin{equation}\label{eq:Pi}
		\Pi_{\nu,r}^{\pm}f(x)
		=\frac{\varphi_{>r}(x)}{(2\pi)^{d/2}}
		\int e^{ix\cdot\xi}
		\rho_\pm\!\left(\frac{x\cdot\xi}{|x||\xi|}\right)
		p(\xi/\nu)\widehat f(\xi)\dd\xi.
	\end{equation}
	The expression is set to zero for $|x|\le r$. The frequency support lies in $\{\nu/2\le|\xi|\le25\nu/24\}$, so no singularity at $\xi=0$ is present. The operators need not be idempotent; their incoming/outgoing interpretation refers to their phase-space supports and the propagation directions used below. Exactly,
	\begin{equation}\label{eq:Pi-sum}
		\Pi_{\nu,r}^{+}+\Pi_{\nu,r}^{-}=\varphi_{>r}P_\nu.
	\end{equation}
	Let $\widetilde p$ be smooth, supported in a fixed enlarged annulus, and equal to one on $\supp p$. Replacing $p$ by $\widetilde p$ in \eqref{eq:Pi} defines $\widetilde\Pi_{\nu,r}^\pm$, and
	\begin{equation}\label{eq:Pi-factor}
		\Pi_{\nu,r}^\pm=\widetilde\Pi_{\nu,r}^\pm P_\nu.
	\end{equation}

	We first establish the $L^2$ boundedness of these operators, which will allow us to combine the decomposition with standard Strichartz estimates.
	
	\begin{lemma}[Uniform $L^2$ boundedness]\label{lem:PiL2}
		The operators $\widetilde\Pi_{\nu,r}^\pm$ are bounded on $L^2$, uniformly in $\nu,r>0$. Consequently,
		\begin{equation}\label{eq:PiL2}
			\|\Pi_{\nu,r}^\pm f\|_{L_x^2}\le C\|P_\nu f\|_{L_x^2}.
		\end{equation}
	\end{lemma}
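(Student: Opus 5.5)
The plan is to treat $\widetilde\Pi_{\nu,r}^\pm$ as a pseudodifferential operator with symbol
\[
a(x,\xi)=\varphi_{>r}(x)\,\rho_\pm\!\Big(\frac{x\cdot\xi}{|x||\xi|}\Big)\,\widetilde p(\xi/\nu),
\]
and to remove the parameters $\nu$ and $r$ by scaling. Let $D_\lambda f(x)=\lambda^{d/2}f(\lambda x)$, which is unitary on $L^2$. Conjugating by $D_\nu$ turns the symbol into $a(x/\nu,\nu\xi)$. The angular factor is homogeneous of degree zero in both $x$ and $\xi$. The frequency factor becomes $\widetilde p(\xi)$. The cutoff becomes $\varphi_{>\nu r}(x)$.

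It therefore suffices to prove $L^2$ boundedness, uniformly in $R=\nu r>0$, of the operator with symbol
\[
a_R(x,\xi)=\varphi_{>R}(x)\,\rho_\pm(\hat x\cdot\hat\xi)\,\widetilde p(\xi).
\]
The obstacle is that $\rho_\pm(\hat x\cdot\hat\xi)$ is singular at $x=0$, and its $x$-derivatives behave like $|x|^{-k}$. For small $R$ these derivatives are not uniformly bounded, so Calderón--Vaillancourt cannot be applied directly.

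To handle this, I would first separate the cutoff from the angular factor. Write
\[
\varphi_{>R}=\varphi_{>R}\,\ind_{|x|\ge R}.
\]
Since multiplication by $\varphi_{>R}$ has norm at most $1$, it suffices to bound the operator
\[
T f(x)=\ind_{|x|\ge R}\int e^{ix\cdot\xi}\rho_\pm(\hat x\cdot\hat\xi)\widetilde p(\xi)\widehat f(\xi)\dd\xi,
\]
and in fact the same operator without any $x$-cutoff, uniformly.

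Next I would decompose $\mathbb R^d$ dyadically in $|x|$:
\begin{itemize}
\item the ball $\{|x|\lesssim 1\}$;
\item shells $\{|x|\sim 2^k\}$ for $k\ge0$.
\end{itemize}
On the ball, $|\rho_\pm|\le1$ and $\widehat f$ is supported in the compact annulus $\supp\widetilde p$. Cauchy--Schwarz then gives the pointwise bound $|Tf(x)|\lesssim\|\widehat f\|_{L^2}$, so
\[
\|Tf\|_{L^2(|x|\lesssim1)}\lesssim\|f\|_{L^2}.
\]
For $|x|\ge1/2$ I would use a smooth cutoff $\chi(x)$ vanishing near the origin. The symbol $\chi(x)\rho_\pm(\hat x\cdot\hat\xi)\widetilde p(\xi)$ is smooth with $\xi$ confined to a compact annulus. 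Its derivatives satisfy
\[
|\partial_x^\alpha\partial_\xi^\beta a|\lesssim_{\alpha,\beta}\langle x\rangle^{-|\alpha|}\le C_{\alpha,\beta}.
\]
These bounds are uniform, so the Calderón--Vaillancourt theorem gives $L^2$ boundedness with a constant independent of everything. The smooth-versus-sharp cutoff transition costs only a bounded multiplication.

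Combining the two regions gives $\|\widetilde\Pi^\pm_{\nu,r}f\|_{L^2}\le C\|f\|_{L^2}$ uniformly in $\nu,r$. The estimate \eqref{eq:PiL2} then follows from \eqref{eq:Pi-factor}.

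The main obstacle is uniformity as $\nu r\to0$, that is, near the origin. The plan addresses it by the crude bound on the bounded region, which uses the compactness of the frequency support after rescaling. If Calderón--Vaillancourt is to be avoided, an alternative is a $TT^*$ or Schur-test argument on the kernel after integrating by parts in $\xi$. On $|x|\gtrsim1$ the kernel decays like $\langle x-y\rangle^{-N}$ modulo angular derivatives, each costing $|x|^{-1}\lesssim1$.
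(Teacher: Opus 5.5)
Your proof is correct, but it takes a longer route than the paper's, and the obstacle it is built around does not arise for this operator. The paper works directly with the kernel. After $\xi=\nu\zeta$ the kernel is $\nu^d\varphi_{>r}(x)(2\pi)^{-d}\int e^{i\nu(x-y)\cdot\zeta}\rho_\pm(\hat x\cdot\hat\zeta)\widetilde p(\zeta)\,d\zeta$. Integrating by parts with $(1-\Delta_\zeta)^k$ gives $|K(x,y)|\le C_k\nu^d(1+\nu^2|x-y|^2)^{-k}$, and Schur's test with $k>d/2$ concludes.

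Only $\zeta$-derivatives of the amplitude enter this argument. They are bounded uniformly over all unit vectors $\hat x$: each derivative of $\hat\zeta$ costs $|\zeta|^{-1}\lesssim1$ on the annulus, not $|x|^{-1}$. So the kernel bound is uniform in $x$ right up to the origin. The singularity of $\hat x$ at $x=0$ only hurts arguments that differentiate in $x$, such as Calder\'on--Vaillancourt. For a symbol compactly supported in $\xi$ with $\xi$-derivatives bounded uniformly in $x$, no $x$-regularity is needed.

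Each of your steps is valid: the rescaling to $R=\nu r$, dropping the output cutoff $\varphi_{>R}$, the Cauchy--Schwarz bound on the unit ball, and Calder\'on--Vaillancourt on $|x|\ge1/2$. So your argument does prove the lemma. What it buys is a reduction to standard black boxes.

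The paper's route is more elementary: no pseudodifferential calculus and no spatial splitting. It also gives more. The kernel is uniformly in $L^1$ in each variable, which yields uniform boundedness on every $L^p$, $1\le p\le\infty$. The same $\zeta$-integration-by-parts mechanism is reused in Lemma~\ref{lem:kernel}.

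Your closing Schur alternative, carried out correctly, is exactly the paper's proof. It needs no restriction to $|x|\gtrsim1$.
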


	\begin{proof}
		The kernel of $\widetilde\Pi_{\nu,r}^\pm$ is
		\[
		\frac{\varphi_{>r}(x)}{(2\pi)^d}
		\int e^{i(x-y)\cdot\xi}
		\rho_\pm\!\left(\frac{x\cdot\xi}{|x||\xi|}\right)
		\widetilde p(\xi/\nu)\dd\xi.
		\]
		Set $\xi=\nu\zeta$. Applying $(1-\Delta_\zeta)^k$ to the amplitude and integrating by parts yields the kernel bound
		\[
		C_k\nu^d(1+\nu^2|x-y|^2)^{-k}.
		\]
		For $k>\frac{d}{2}$, the integral of this bound with respect to either $x$ or $y$ is uniformly bounded, independently of $\nu$ and $r$. Hence, Schur's test gives the asserted boundedness. Finally, the  identity \eqref{eq:Pi-factor} yields \eqref{eq:PiL2}.
	\end{proof}
	
	We next establish nonstationary kernel estimates for the incoming and outgoing propagators when the input is localized near the origin. These estimates, including bounds on the input derivatives of the kernels, will be used to control the interior nonlinear terms without requiring derivatives of the solution.
	
	Choose a radial $\eta\in C_c^\infty(\R^d)$ with $0\le\eta\le1$, equal to one on the unit ball and zero outside the ball of radius two. The interior input region will have radius $(r+\nu\tau)/64$, where the output radius $r$ and the dyadic frequency $\nu$ are fixed. The following lemma provides the required kernel estimates on a slightly enlarged input region.

	\begin{lemma}[Nonstationary kernel and its input derivatives]\label{lem:kernel}
		Let $\nu r\ge1$, and let $K_{\nu,r}^\pm(\tau;x,y)$ be the kernel of $\Pi_{\nu,r}^\pm e^{\mp i\tau\Delta}$. For $j=0,1$, every integer $m\ge1$, and every $L>d$,
		\begin{equation}\label{eq:kernel}
			|\nabla_y^jK_{\nu,r}^\pm(\tau;x,y)|
			\le C_{m,L}\nu^{d+j}(\nu r+\nu^2\tau)^{-m}
			(1+\nu|x-y|)^{-L}
		\end{equation}
		whenever $|y|\le(r+\nu\tau)/16$.  The constants are independent of $\nu,r,\tau,x,y$.
	\end{lemma}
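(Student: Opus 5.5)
The plan is to write the kernel explicitly and run a nonstationary-phase argument, with all constants uniform because the amplitude is smooth on the scale $\nu^{-1}$ in $\xi$. We take $\tau\ge0$. Since $e^{\mp i\tau\Delta}$ has multiplier $e^{\pm i\tau|\xi|^2}$,
\[
K_{\nu,r}^\pm(\tau;x,y)=\frac{\varphi_{>r}(x)}{(2\pi)^d}\int e^{i\Phi_\pm(\xi)}\,\rho_\pm\!\Big(\frac{x\cdot\xi}{|x||\xi|}\Big)p(\xi/\nu)\dd\xi,\qquad \Phi_\pm(\xi)=(x-y)\cdot\xi\pm\tau|\xi|^2 .
\]
Moreover $\nabla_y^j$ only inserts a factor $(-i\xi)^j$, of size $\lesssim\nu^j$ on the support. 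It therefore suffices to treat $j=0$ with the amplitude $a(\xi)=\rho_\pm(\cdot)p(\xi/\nu)$, or $\nu^{-j}\xi^j a(\xi)$. Both satisfy $|\partial_\xi^\alpha a|\lesssim_\alpha\nu^{-|\alpha|}$. This holds because $\xi\mapsto x\cdot\xi/(|x||\xi|)$ is homogeneous of degree zero, so its $\xi$-derivatives are $O(|\xi|^{-|\alpha|})=O(\nu^{-|\alpha|})$ uniformly in $x$.

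The first and key step is a lower bound for the phase gradient $\nabla_\xi\Phi_\pm=x-y\pm2\tau\xi$ on the support. On $\supp\rho_+$ we have $x\cdot\xi\ge-\tfrac12|x||\xi|$. Setting $a=|x|$ and $b=2\tau|\xi|$, this gives
\[
|x+2\tau\xi|^2\ge a^2-ab+b^2\ge\tfrac12(a^2+b^2).
\]
Hence $|x+2\tau\xi|\gtrsim|x|+\nu\tau$. The $-$ case is identical, using $x\cdot\xi\le\tfrac12|x||\xi|$ on $\supp\rho_-$ and the vector $x-2\tau\xi$. This is exactly where the incoming/outgoing angular cutoff is used.

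Next we absorb $y$. Since $|x|\ge r$ on the support of $\varphi_{>r}$ and $|y|\le(r+\nu\tau)/16$, we get $|\nabla_\xi\Phi_\pm|\ge c(|x|+\nu\tau)$ for an absolute $c>0$. Since also $|x-y|\le|x|+|y|\lesssim|x|+\nu\tau$, this yields
\[
|\nabla_\xi\Phi_\pm|\gtrsim (r+\nu\tau)+|x-y|=:D .
\]

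Then integrate by parts $N=m+L$ times (in practice a large fixed number) with the operator
\[
\mathcal L=\frac{\nabla_\xi\Phi\cdot\nabla_\xi}{i|\nabla_\xi\Phi|^2},
\]
moving its transpose onto the amplitude. The point to verify, which is the only mildly delicate step, is that each application gains a factor $(\nu D)^{-1}$. Derivatives of $a$ cost $\nu^{-1}$. Derivatives of the coefficients $\nabla\Phi/|\nabla\Phi|^2$ involve only the Hessian $\pm2\tau I$, and
\[
2\tau\lesssim\frac{|x|+\nu\tau}{\nu}\lesssim\frac{|\nabla\Phi|}{\nu},
\]
so each such derivative also costs at most $\nu^{-1}$ relative to $|\nabla\Phi|^{-1}$. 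Induction then gives an integrand bounded by $C_N(\nu D)^{-N}$ times the indicator of the annulus $|\xi|\sim\nu$. Integrating in $\xi$ contributes a further factor $\nu^d$.

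Finally, $\nu D\gtrsim\nu(r+\nu\tau)+\nu|x-y|$ and $\nu r\ge1$ give
\[
(\nu D)^{-m-L}\lesssim(\nu r+\nu^2\tau)^{-m}(1+\nu|x-y|)^{-L}.
\]
Reinstating the factor $\nu^j$ from the $y$-derivative yields \eqref{eq:kernel}. The constants depend only on $m$, $L$, $\rho_\pm$, $p$ and $\varphi$, not on $\nu,r,\tau,x,y$. The restriction $L>d$ plays no role in the estimate itself and only ensures integrability of the bound in later use.
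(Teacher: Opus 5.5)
Your proposal is correct and follows essentially the same route as the paper. In both, the angular cutoff gives $|x-y\pm2\tau\xi|\gtrsim|x|+\nu\tau$ on the support, after which you integrate by parts repeatedly with $\mathcal L$ and use $\nu r\ge1$ to split the resulting decay into the factors $(\nu r+\nu^2\tau)^{-m}$ and $(1+\nu|x-y|)^{-L}$. The differences are only cosmetic: you work in unscaled $\xi$ with $D=(r+\nu\tau)+|x-y|\approx|x|+\nu\tau$ rather than rescaling $\xi=\nu\zeta$ with $\Lambda=\nu(|x|+\nu\tau)$, and you spell out more carefully why derivatives of the amplitude and of the Hessian-dependent coefficients cost only $\nu^{-1}$ per integration by parts.
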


	\begin{proof}
		It suffices to consider the plus sign, since the minus-sign case follows by an analogous argument. The corresponding kernel is
		\begin{align}
			K_{\nu,r}^+(\tau;x,y)
			=\frac{\varphi_{>r}(x)}{(2\pi)^d}
			\int e^{i((x-y)\cdot\xi+\tau|\xi|^2)}
			\rho_+\!\left(\frac{x\cdot\xi}{|x||\xi|}\right)p(\xi/\nu)\dd\xi.
		\notag
		\end{align}
		On the support of $\rho_+$, we have $x\cdot\xi\ge-|x||\xi|/2$. Consequently,
		\begin{align}
			|x+2\tau\xi|^2
			&\ge |x|^2+4\tau^2|\xi|^2-2\tau|x||\xi| \ge\tfrac14(|x|+2\tau|\xi|)^2.
			\label{eq:geometric}
		\end{align}
		Using $|\xi|\ge\nu/2$, $|x|\ge r$, and $|y|\le(r+\nu\tau)/16$, we further obtain
		\begin{equation}\label{eq:phase-separation}
			|x-y+2\tau\xi|
			\ge\frac{7}{16}(|x|+\nu\tau)
			\ge\frac14(|x|+\nu\tau).
		\end{equation}
		After the change of variables $\xi=\nu\zeta$, the phase becomes
		\[
		\Phi(\zeta)=\nu(x-y)\cdot\zeta+\tau\nu^2|\zeta|^2.
		\]
		It follows from \eqref{eq:geometric} and \eqref{eq:phase-separation} that
		\[
		c_0\Lambda\le|\nabla_\zeta\Phi|\le C_0\Lambda,
		\qquad |\partial_\zeta^2\Phi|\le C\Lambda,
		\qquad \partial_\zeta^\gamma\Phi=0\quad(|\gamma|\ge3),
		\]
		where  $\Lambda=\nu(|x|+\nu\tau)$. 
		Using
		\[
		\mathcal L=\frac{\nabla_\zeta\Phi}{i|\nabla_\zeta\Phi|^2}\cdot\nabla_\zeta,\qquad \mathcal L(e^{i\Phi})=e^{i\Phi}
		\]
		and integrating by parts $k$ times, we obtain
		\[
		|K_{\nu,r}^+(\tau;x,y)|\le C_k\nu^d\Lambda^{-k}.
		\]
		Differentiation with respect to $y_j$ introduces only the factor $-i\xi_j=-i\nu\zeta_j$ into the amplitude, so the same argument yields
		\[
		|\nabla_yK_{\nu,r}^+(\tau;x,y)|\le C_k\nu^{d+1}\Lambda^{-k}.
		\]
		These estimates concern derivatives of the kernel itself, before any input cutoff is differentiated.
		
		Finally, since $\nu r\ge1$ and in view of the input support, we have
		\[
		\Lambda\ge\nu r+\nu^2\tau,\qquad
		1+\nu|x-y|\le C\Lambda.
		\]
		Choosing an integer $k\ge m+L$ yields \eqref{eq:kernel}.
	\end{proof}

As a consequence of Lemma~\ref{lem:kernel} and Schur's test, we obtain the following localized estimates, which will be used to control the interior nonlinear terms.

 If $a_\tau$ and a vector field $v$ are supported in $|y|\le(r+\nu\tau)/32$, then
	\begin{align}
		\|\Pi_{\nu,r}^\pm e^{\mp i\tau\Delta}(a_\tau f)\|_{L_x^2}
		&\le C_m(\nu r+\nu^2\tau)^{-m}\|a_\tau\|_{L_x^\infty}\|f\|_{L_x^2},
		\label{eq:localized-L2}\\
		\|\Pi_{\nu,r}^\pm e^{\mp i\tau\Delta}\diver v\|_{L_x^2}
		&\le C_m\nu(\nu r+\nu^2\tau)^{-m}\|v\|_{L_x^2}.
		\label{eq:localized-div}
	\end{align}

\subsection{Estimates for the interior and exterior nonlinear terms}
\label{subsec:exterior-bootstrap}

We now estimate the nonlinear terms arising in the Duhamel representation. We treat the interior and exterior regions separately. In the interior region, we use the equation and integration by parts in time to exploit the nonstationary kernel estimates. In the exterior region, we combine the spatial decay of the inhomogeneous coefficient with frequency localization and Strichartz estimates.

Let 
	\begin{equation}\label{eq:eta}
		\eta_\tau(x)=\eta\!\left(\frac{64x}{r+\nu\tau}\right),\qquad \tau\ge0.
	\end{equation}
	We first establish a uniform estimate for the interior contribution.

	\begin{lemma}[Interior tail estimate]\label{lem:inner}
		Let $d\ge3$, $0<b<2$, and $u\in C([0,\infty);H^1)$ solve \eqref{eq:inls} with conserved mass. Suppose $\nu r\ge1$. For every integer $m\ge1$, $t\ge0$, and $0\le S\le T<\infty$,
		\begin{align}
			&\left\|\int_S^T\Pi_{\nu,r}^+e^{-i\tau\Delta}
			\left[\eta_\tau|x|^{-b}|u(t+\tau)|^{\frac{4-2b}{d}}u(t+\tau)\right]\dd\tau\right\|_{L_x^2} \le C_m\|u_0\|_{L_x^2}(\nu r+\nu^2S)^{-m}.
			\label{eq:innerplus}
		\end{align}
		For $0\le S\le T\le t$,
		\begin{align}
			&\left\|\int_S^T\Pi_{\nu,r}^-e^{i\tau\Delta}
			\left[\eta_\tau|x|^{-b}|u(t-\tau)|^{\frac{4-2b}{d}}u(t-\tau)\right]\dd\tau\right\|_{L_x^2} \le C_m\|u_0\|_{L_x^2}(\nu r+\nu^2S)^{-m}.
			\label{eq:innerminus}
		\end{align}
		The constant depends only on $d,m$ and the fixed cutoffs, not on $\nu,r,t,S,T$. In particular, the future interior integral converges strongly in $L^2$. This lemma does not require the reduced Duhamel formula or a mass threshold.
	\end{lemma}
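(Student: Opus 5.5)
The plan is to remove the singular nonlinearity by using the equation and integrating by parts in time. Every derivative will then fall either on the cutoff $\eta_\tau$ or on the propagation kernel, never on $|x|^{-b}$ or on $u$. Write $F(w)=|x|^{-b}|w|^{\frac{4-2b}{d}}w$ and $w(\tau)=u(t+\tau)$. The equation \eqref{eq:inls} gives $F(w)=-i\partial_\tau w-\Delta w$. Differentiating the product directly,
\[
\partial_\tau\bigl(e^{-i\tau\Delta}(\eta_\tau w)\bigr)=e^{-i\tau\Delta}\bigl(-i\Delta(\eta_\tau w)+(\partial_\tau\eta_\tau)w+\eta_\tau\partial_\tau w\bigr).
\]
Multiplying by $-i$ and using $-\Delta(\eta_\tau w)+\eta_\tau\Delta w=-\diver(2w\nabla\eta_\tau)+(\Delta\eta_\tau)w$, we obtain the identity
\[
e^{-i\tau\Delta}\bigl[\eta_\tau F(w)\bigr]=-i\partial_\tau\bigl(e^{-i\tau\Delta}(\eta_\tau w)\bigr)
+e^{-i\tau\Delta}\bigl[-\diver(2w\nabla\eta_\tau)+(\Delta\eta_\tau)w+i(\partial_\tau\eta_\tau)w\bigr].
\]
We then apply $\Pi_{\nu,r}^+$ and integrate over $[S,T]$. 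This produces two boundary terms, at $\tau=S$ and $\tau=T$, together with three integrals: a divergence term and two zero-order terms.

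Each piece is then estimated with the localized bounds \eqref{eq:localized-L2} and \eqref{eq:localized-div}. These apply because $\supp\eta_\tau\subset\{|y|\le 2(r+\nu\tau)/64\}=\{|y|\le(r+\nu\tau)/32\}$, and the same support contains the supports of $\nabla\eta_\tau$, $\Delta\eta_\tau$ and $\partial_\tau\eta_\tau$. Mass conservation gives $\|w(\tau)\|_{L^2}=\|u_0\|_{L^2}$ throughout.

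The boundary terms are bounded by $C_m(\nu r+\nu^2\tau)^{-m}\|u_0\|_{L^2}$ at $\tau=S,T$, and both are at most $C_m(\nu r+\nu^2S)^{-m}\|u_0\|_{L^2}$. For the remaining terms we use the scaling of the cutoff:
\[
|\nabla\eta_\tau|\lesssim(r+\nu\tau)^{-1},\qquad|\Delta\eta_\tau|\lesssim(r+\nu\tau)^{-2},\qquad|\partial_\tau\eta_\tau|\lesssim\nu(r+\nu\tau)^{-1}.
\]
The last bound holds because $\partial_\tau\eta_\tau=-\frac{\nu}{r+\nu\tau}(y\cdot\nabla\eta)(\cdot)$.
\begin{itemize}
\item The divergence term and the $\partial_\tau\eta_\tau$ term are both bounded by $C\nu^2(\nu r+\nu^2\tau)^{-m-1}\|u_0\|_{L^2}$.
\item The $\Delta\eta_\tau$ term is bounded by $C\nu^2(\nu r+\nu^2\tau)^{-m-2}\|u_0\|_{L^2}$, which is no worse since $\nu r\ge1$.
\end{itemize}
Integrating in $\tau$ gives
\[
\int_S^\infty\nu^2(\nu r+\nu^2\tau)^{-m-1}\dd\tau=\tfrac1m(\nu r+\nu^2S)^{-m},
\]
which proves \eqref{eq:innerplus} with constants independent of $\nu,r,t,S,T$.

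The estimate \eqref{eq:innerminus} follows in the same way. We set $w(\tau)=u(t-\tau)$, so that $\partial_\tau w=-\partial_tu$. The identity is redone with $e^{i\tau\Delta}$ and $\Pi^-_{\nu,r}$, and the signs combine so that again only $\nabla\eta_\tau$, $\Delta\eta_\tau$ and $\partial_\tau\eta_\tau$ appear. The restriction $T\le t$ only ensures that $u(t-\tau)$ is defined. Strong convergence of the future interior integral is immediate: the bound on $[S,T]$ tends to $0$ as $S\to\infty$, so the integrals over $[0,T]$ form a Cauchy family in $L^2$.

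I expect the main obstacle to be making the time integration by parts rigorous, because $u\in C([0,\infty);H^1)$ only gives $\partial_tu\in C(H^{-1})$. I would handle this in the weak form. Pair the integrand with a test function $g\in L^2$ and move everything onto $(\Pi^+_{\nu,r}e^{-i\tau\Delta})^*g$, which is smooth and frequency-localized. The product-rule identity then holds as an identity of scalar functions of $\tau$ that are continuous, with $H^{-1}$–$H^1$ pairings. Alternatively, one can regularize $u$ by $P_{\le M}$ and pass to the limit using the local well-posedness and continuity results in Proposition~\ref{prop:local}. Note that neither approach, nor any step above, uses $H^1$ bounds on $u$ in the final estimate; only the mass $\|u_0\|_{L^2}$ appears.
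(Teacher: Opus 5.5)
Your proposal is correct and follows the paper's proof essentially step by step. The common ingredients are the integration by parts in time using the equation, the divergence-form commutator, the localized bounds \eqref{eq:localized-L2}--\eqref{eq:localized-div} on $\supp\eta_\tau\subset\{|y|\le(r+\nu\tau)/32\}$, and the same explicit $\tau$-integrals, with rigor handled as in the paper via $u\in C_tH^1_x\cap C^1_tH^{-1}_x$ and the frequency localization of $\Pi^{\pm}_{\nu,r}$.

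There is one harmless sign slip. Solving for $\eta_\tau F(w)$ actually gives the commutator term $+[\Delta,\eta_\tau]w=\diver(2w\nabla\eta_\tau)-(\Delta\eta_\tau)w$, not its negative, and this does not affect your estimates since only norms of these terms are used.
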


\begin{proof}
	\textit{Step 1: the finite-time identity and commutator representation.}
	On every compact time interval, the energy-space equation implies
	$u\in C_tH_x^1\cap C_t^1H_x^{-1}$, as follows from the standard weighted nonlinear estimate in \cite[Lemma~3]{CG16}. For each fixed $\nu$, the frequency cutoff maps $H_x^{-1}$ continuously into $L_x^2$. Thus, by \eqref{eq:Pi-factor} and Lemma~\ref{lem:PiL2}, the following product rule is justified as an identity in $L_x^2$:
	\begin{align}
		&\frac{\dd}{\dd\tau}\left\{\Pi_{\nu,r}^+e^{-i\tau\Delta}
		[\eta_\tau u(t+\tau)]\right\}\notag\\
		&=\Pi_{\nu,r}^+e^{-i\tau\Delta}
		[-i\Delta(\eta_\tau u)+(\partial_\tau\eta_\tau)u+
		\eta_\tau\partial_\tau u(t+\tau)]\notag\\
		&=\Pi_{\nu,r}^+e^{-i\tau\Delta}
		[(\partial_\tau\eta_\tau)u(t+\tau)-i[\Delta,\eta_\tau]u(t+\tau)]\notag\\
		&\quad+i\Pi_{\nu,r}^+e^{-i\tau\Delta}
		[\eta_\tau|x|^{-b}|u(t+\tau)|^{\frac{4-2b}{d}}u(t+\tau)].
	\notag
	\end{align}
	Integrating over the finite interval $[S,T]$, we obtain
	\begin{align}
		&\int_S^T\Pi_{\nu,r}^+e^{-i\tau\Delta}
		[\eta_\tau|x|^{-b}|u(t+\tau)|^{\frac{4-2b}{d}}u(t+\tau)]\dd\tau\notag\\
		&=-i\left[\Pi_{\nu,r}^+e^{-i\tau\Delta}
		(\eta_\tau u(t+\tau))\right]_{\tau=S}^{\tau=T}\notag\\
		&\quad+i\int_S^T\Pi_{\nu,r}^+e^{-i\tau\Delta}
		[(\partial_\tau\eta_\tau)u(t+\tau)]\dd\tau\notag\\
		&\quad+\int_S^T\Pi_{\nu,r}^+e^{-i\tau\Delta}
		[[\Delta,\eta_\tau]u(t+\tau)]\dd\tau.
		\label{eq:interior-identity}
	\end{align}
	
	To estimate the commutator term, we first write it in divergence form:
	\begin{equation}\label{eq:comm-div}
		[\Delta,\eta_\tau]u
		=2\nabla\eta_\tau\cdot\nabla u+(\Delta\eta_\tau)u
		=2\diver((\nabla\eta_\tau)u)-(\Delta\eta_\tau)u.
	\end{equation}
	Expressing the resulting terms in kernel form gives
	\begin{align}
		&\Pi_{\nu,r}^+e^{-i\tau\Delta}[[\Delta,\eta_\tau]u](x)\notag\\
		&=-2\int\nabla_yK_{\nu,r}^+(\tau;x,y)
		\cdot\nabla\eta_\tau(y)u(y)\dd y -\int K_{\nu,r}^+(\tau;x,y)(\Delta\eta_\tau)(y)u(y)\dd y.
\notag
	\end{align}
	Moreover, direct differentiation of \eqref{eq:eta} yields
	\begin{equation}\label{eq:eta-derivatives}
		\|\partial_\tau\eta_\tau\|_{L_x^\infty}\lesssim\frac\nu{r+\nu\tau},
		\quad\|\nabla\eta_\tau\|_{L_x^\infty}\lesssim\frac1{r+\nu\tau},
		\quad\|\Delta\eta_\tau\|_{L_x^\infty}\lesssim\frac1{(r+\nu\tau)^2}.
	\end{equation}
	
	\textit{Step 2: endpoint and error estimates.}
	By \eqref{eq:localized-L2} and mass conservation,
	\begin{equation}\label{eq:interior-endpoint}
		\|\Pi_{\nu,r}^+e^{-i\tau\Delta}(\eta_\tau u(t+\tau))\|_{L_x^2}
		\le C_m\|u_0\|_{L_x^2}(\nu r+\nu^2\tau)^{-m}.
	\end{equation}
	Hence, the sum of the two endpoint norms in \eqref{eq:interior-identity} is bounded by $2C_m\|u_0\|_{L_x^2}(\nu r+\nu^2S)^{-m}$. Furthermore, \eqref{eq:localized-L2}, \eqref{eq:localized-div}, \eqref{eq:comm-div}, and \eqref{eq:eta-derivatives} yield
	\begin{align}
		&\|\Pi_{\nu,r}^+e^{-i\tau\Delta}[(\partial_\tau\eta_\tau)u(t+\tau)]\|_{L_x^2}
		\le C_m\|u_0\|_{L_x^2}
		\frac\nu{r+\nu\tau}(\nu r+\nu^2\tau)^{-m},\label{eq:timecut-error}\\
		&\|\Pi_{\nu,r}^+e^{-i\tau\Delta}[[\Delta,\eta_\tau]u(t+\tau)]\|_{L_x^2} \le C_m\|u_0\|_{L_x^2}
		\left(\frac\nu{r+\nu\tau}+\frac1{(r+\nu\tau)^2}\right)
		(\nu r+\nu^2\tau)^{-m}.
		\label{eq:comm-error}
	\end{align}
	
	Both scalar time integrals can be evaluated explicitly. With the substitution $z=\nu r+\nu^2\tau$, we obtain
	\begin{align}
		\int_S^\infty\frac\nu{r+\nu\tau}(\nu r+\nu^2\tau)^{-m}\dd\tau
		&=\frac1m(\nu r+\nu^2S)^{-m},\label{eq:timeintegral-one}\\
		\int_S^\infty\frac1{(r+\nu\tau)^2}(\nu r+\nu^2\tau)^{-m}\dd\tau
		&=\frac1{m+1}(\nu r+\nu^2S)^{-m-1}.
		\label{eq:timeintegral-two}
	\end{align}
	Since $\nu r\ge1$, the second integral is bounded by a constant multiple of the first decay factor. Substituting these estimates into \eqref{eq:interior-identity} establishes \eqref{eq:innerplus}, uniformly in the observation time.
	
	\textit{Step 3: convergence at the future endpoint.}
	For fixed $\nu,r,t$, estimate \eqref{eq:interior-endpoint} implies that
	\[
	\Pi_{\nu,r}^+e^{-iT\Delta}[\eta_Tu(t+T)]\to0
	\quad\text{strongly in }L^2.
	\]
	Alternatively, applying \eqref{eq:innerplus} with $S=T_1$ and $T=T_2$ shows directly that the integrals with finite upper limits form a Cauchy family in $L^2$ as $T_1,T_2\to\infty$. Hence these integrals converge strongly in $L^2$. The future integral over the entire half-line is understood in this sense.
	
	\textit{Step 4: the backward contribution.}
	For $0\le\tau\le t$, the equation gives
	\[
	\partial_\tau u(t-\tau)=-i\Delta u(t-\tau)
	-i|x|^{-b}|u(t-\tau)|^{\frac{4-2b}{d}}u(t-\tau).
	\]
	Consequently,
	\begin{align}
		&\frac{\dd}{\dd\tau}
		\{\Pi_{\nu,r}^-e^{i\tau\Delta}[\eta_\tau u(t-\tau)]\}\notag\\
		&=\Pi_{\nu,r}^-e^{i\tau\Delta}
		[(\partial_\tau\eta_\tau)u(t-\tau)+i[\Delta,\eta_\tau]u(t-\tau)]\notag\\
		&\quad-i\Pi_{\nu,r}^-e^{i\tau\Delta}
		[\eta_\tau|x|^{-b}|u(t-\tau)|^{\frac{4-2b}{d}}u(t-\tau)].
\notag
	\end{align}
	Integrating over $[S,T]$, where $0\le S\le T\le t$, yields
	\begin{align}
		&\int_S^T\Pi_{\nu,r}^-e^{i\tau\Delta}
		[\eta_\tau|x|^{-b}|u(t-\tau)|^{\frac{4-2b}{d}}u(t-\tau)]\dd\tau\notag\\
		&=i\left[\Pi_{\nu,r}^-e^{i\tau\Delta}(\eta_\tau u(t-\tau))\right]_{\tau=S}^{\tau=T}\notag\\
		&\quad-i\int_S^T\Pi_{\nu,r}^-e^{i\tau\Delta}
		[(\partial_\tau\eta_\tau)u(t-\tau)]\dd\tau\notag\\
		&\quad+\int_S^T\Pi_{\nu,r}^-e^{i\tau\Delta}
		[[\Delta,\eta_\tau]u(t-\tau)]\dd\tau.
\notag
	\end{align}
	The corresponding minus-sign kernel estimates yield the analogues of \eqref{eq:interior-endpoint}--\eqref{eq:comm-error}. Integrating the resulting nonnegative upper bounds over $[S,\infty)$ and applying \eqref{eq:timeintegral-one}--\eqref{eq:timeintegral-two}, we obtain \eqref{eq:innerminus}.
\end{proof}

Having controlled the interior contribution, we turn to the exterior nonlinear terms. The decay of the coefficient $|x|^{-b}$ allows us to gain additional frequency decay through a bootstrap argument. More precisely, we first use a provisional frequency-decay bound for $u$ to estimate the frequency-localized nonlinearity, and then apply the dual Strichartz estimates to improve the decay exponent.

For $0\le s<1$, assume
\begin{equation}\label{eq:bootstrap-hyp}
	\text{for every }a>0,\qquad
	\sup_{t\ge0}\|\varphi_{>a}P_\mu u(t)\|_{L_x^2}
	\le C_{u,a,s}\mu^{-s}\quad\text{for all dyadic }\mu\ge1.
\end{equation}
When $s=0$, this bound follows directly from mass conservation. The assumption is imposed for every fixed positive radius to accommodate the smaller cutoff radii used below. We begin with two consequences of \eqref{eq:bootstrap-hyp}, which provide the high-frequency tail and low-frequency gradient estimates needed to control the exterior nonlinearity.

	\begin{lemma}[Exterior high-frequency tails and low-frequency gradients]\label{lem:bootstrap-tools}
		If \eqref{eq:bootstrap-hyp} holds, then for each fixed $a>0$ and $\nu\ge1$,
		\begin{align}
			\|\varphi_{>a}P_{>\nu}u(t)\|_{L_x^2}&\lesssim_{u,a,s}\nu^{-s},\label{eq:localhigh}\\
			\|\varphi_{>a}\nabla P_{\le\nu}u(t)\|_{L_x^2}&\lesssim_{u,a,s}\nu^{1-s}.
			\label{eq:locallow}
		\end{align}
	\end{lemma}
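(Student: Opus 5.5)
Both bounds come from summing the hypothesis \eqref{eq:bootstrap-hyp} over dyadic frequencies. The point needing care is that the cutoff $\varphi_{>a}$ does not commute with $P_\mu$. Fix $a>0$ and work at radius $a/2$: write $\varphi_{>a}=\varphi_{>a}\varphi_{>a/2}$ wherever the kernels allow it. Otherwise, insert the splitting $1=\varphi_{\le a/2}+\varphi_{>a/2}$ in front of $P_\mu u$. The hypothesis is applied at radius $a/2$, which is allowed since \eqref{eq:bootstrap-hyp} is assumed for every positive radius.

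\emph{Low-frequency gradients.} We write $\nabla P_{\le\nu}u=\nabla P_{\le 1}u+\sum_{1<\mu\le\nu}\nabla P_\mu u$. The term $\nabla P_{\le1}u$ is bounded in $L^2$ by $\|u\|_{L^2}$. For each dyadic $\mu$ we write $P_\mu=\widetilde P_\mu P_\mu$ and split $P_\mu u=\varphi_{>a/2}P_\mu u+\varphi_{\le a/2}P_\mu u$ before applying $\varphi_{>a}\nabla\widetilde P_\mu$. The first piece is bounded by Bernstein (Lemma~\ref{lem:bernstein}): it is at most $\mu\|\varphi_{>a/2}P_\mu u\|_{L^2}\lesssim_{u,a,s}\mu^{1-s}$. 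The second piece is handled by the mismatch estimate of Lemma~\ref{lem:mismatch}, stated for enlarged multipliers. It gives $\|\varphi_{>a}\nabla\widetilde P_\mu(\varphi_{\le a/2}f)\|_{L^2}\le C_m\mu(\mu a)^{-m}\|f\|_{L^2}$. With $m=1$ this is $O_a(\|u_0\|_{L^2})$ for each $\mu$, and with $m\ge2$ the sum over $\mu\ge1$ is $O_a(1)$. Summing the geometric series $\sum_{\mu\le\nu}\mu^{1-s}\lesssim\nu^{1-s}$ uses $s<1$, which is exactly why the hypothesis restricts $s$. This gives \eqref{eq:locallow}.

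\emph{High-frequency tails.} Summing $\mu^{-s}$ over $\mu>\nu$ fails when $s=0$, so we avoid the dyadic sum. We write $P_{>\nu}u=P_{>\nu}(\varphi_{>a/2}u)+P_{>\nu}(\varphi_{\le a/2}u)$ and treat the two pieces separately.
\begin{itemize}
\item The second piece satisfies $\varphi_{>a}P_{>\nu}(\varphi_{\le a/2}u)=-\varphi_{>a}P_{\le\nu}(\varphi_{\le a/2}u)$, since $\varphi_{>a}\varphi_{\le a/2}=0$. By Lemma~\ref{lem:mismatch} it is therefore $O((\nu a)^{-m})=O_a(\nu^{-s})$.
\item For the first piece, $\|P_{>\nu}(\varphi_{>a/2}u)\|_{L^2}$ is controlled by $\|P_{>\nu/4}\varphi_{>a/2}u\|$. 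We decompose $u=P_{\le\nu/16}u+P_{>\nu/16}u$. The low part contributes $\|P_{>\nu/4}\varphi_{>a/2}P_{\le\nu/16}u\|$, which is rapidly decaying in $\nu a$ by the frequency-mismatch estimates of Lemma~\ref{lem:mismatch} (summed over $\mu\le\nu/16$, as in the gradient case).
\end{itemize}

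The remaining high part, $\|\varphi_{>a/2}P_{>\nu/16}u\|$, is the main obstacle: it is the same quantity at radius $a/2$, so the argument is circular. To break the circle, bound it by the square-function sum $\bigl(\sum_{\mu>\nu/16}\|\varphi_{>a/4}P_\mu u\|^2\bigr)^{1/2}$ plus mismatch errors. This is almost-orthogonality: the pieces $\varphi_{>a/2}P_\mu u$ have essentially disjoint frequency support, up to Schwartz tails controlled by Lemma~\ref{lem:mismatch}. For $s>0$ the square sum is $\lesssim(\sum\mu^{-2s})^{1/2}\lesssim\nu^{-s}$. For $s=0$, \eqref{eq:localhigh} is trivial from mass conservation. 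I would first try this $\ell^2$-orthogonality route, commuting $\varphi_{>a/2}$ past $P_\mu$ with errors $O((\mu a)^{-m})$. The constants depend on $a$ only through the finitely many halvings of the radius.
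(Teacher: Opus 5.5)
Your proof of \eqref{eq:locallow} is the paper's argument. You split off $P_{\le1}$, insert $\widetilde P_\mu$, and divide the input into $\varphi_{>a/2}P_\mu u$ (Bernstein plus \eqref{eq:bootstrap-hyp} at radius $a/2$) and $\varphi_{\le a/2}P_\mu u$ (mismatch, giving $\lesssim_a\mu^{-1}$), then sum using $s<1$.

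For \eqref{eq:localhigh} your argument is correct but takes a needless detour, based on a misdiagnosis. The paper simply writes $\varphi_{>a}P_{>\nu}u=\sum_{\mu>\nu}\varphi_{>a}P_\mu u$, which converges in $L^2$ because multiplication by $\varphi_{>a}$ is bounded, and applies the triangle inequality. For $s>0$ this gives $\sum_{\mu>\nu}\mu^{-s}\lesssim_s\nu^{-s}$; for $s=0$ the bound is just mass conservation, as you note yourself at the end. So the failure of the dyadic sum at $s=0$ is moot.

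No commutation of $\varphi_{>a}$ past $P_\mu$ is needed here either, because \eqref{eq:bootstrap-hyp} already controls $\varphi_{>a}P_\mu u$ with the cutoff on the outside. Your physical-space and frequency splitting only brings you back to $\|\varphi_{>a/2}P_{>\nu/16}u\|_{L_x^2}$, a quantity of the same type. The ``circularity'' you then break with $\ell^2$ almost-orthogonality is an artifact of that reduction. The almost-orthogonality step is valid, but all it buys is a better $s$-dependence of the constant (roughly $s^{-1/2}$ instead of $s^{-1}$). That is irrelevant, since $s$ takes only finitely many fixed values in the bootstrap.

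One minor slip inside the detour: summing the frequency-mismatch bounds over all dyadic $\mu\le\nu/16$ does not literally converge, because there are infinitely many $\mu<1$. Treat $P_{\le\nu/16}$ as a single block instead. Since $P_{>\nu/4}P_{\le\nu/16}=0$, only $P_{>\nu/4}(\varphi_{\le a/2}P_{\le\nu/16}u)=O((a\nu)^{-m})$ survives.

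The whole high-frequency part can be replaced by the direct $\ell^1$ dyadic sum together with the trivial $s=0$ case.
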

	
	\begin{proof}
		For $s>0$, decompose $P_{>\nu}u=\sum_{\mu>\nu}P_\mu u$ in $L^2$. Applying \eqref{eq:bootstrap-hyp}, we obtain
		\[
		\sum_{\mu>\nu}\|\varphi_{>a}P_\mu u\|_{L_x^2}
		\le C_{u,a,s}\sum_{\mu>\nu}\mu^{-s}
		\le C'_{u,a,s}\nu^{-s}.
		\]
		For $s=0$, we instead use $\|\varphi_{>a}P_{>\nu}u\|_{L_x^2}\lesssim\|u_0\|_{L_x^2}$. This establishes \eqref{eq:localhigh}.
		
		We next prove \eqref{eq:locallow}. For each frequency band $\mu\ge1$, insert $\widetilde P_\mu P_\mu=P_\mu$ and decompose the input in physical space:
		\begin{align*}
			\varphi_{>a}\nabla P_\mu u
			={}&\varphi_{>a}\nabla\widetilde P_\mu
			[\varphi_{>a/2}P_\mu u] +\varphi_{>a}\nabla\widetilde P_\mu
			[\varphi_{\le a/2}P_\mu u].
		\end{align*}
		By Bernstein's inequality and \eqref{eq:bootstrap-hyp}, the first term is bounded by $C_{u,a,s}\mu^{1-s}$. For the second term, the mismatch estimate gives $C_a\mu^{-1}\|P_\mu u\|_{L_x^2}\le C_a\mu^{-1}\|u_0\|_{L_x^2}$. Consequently,
		\[
		\|\varphi_{>a}\nabla P_\mu u(t)\|_{L_x^2}
		\le C_{u,a,s}\mu^{1-s}+C_a\|u_0\|_{L_x^2}\mu^{-1}.
		\]
		Finally, write $P_{\le\nu}=P_{\le1}+\sum_{1<\mu\le\nu}P_\mu$. The low-frequency term is bounded by $C\|u_0\|_{L_x^2}$.
		On the other hand,  since   $s<1$, we have
		\[
		\sum_{1<\mu\le\nu}\mu^{1-s}\lesssim_s\nu^{1-s},
		\qquad\sum_{1<\mu\le\nu}\mu^{-1}\lesssim1.
		\]
		Combining these estimates with the bound for the low-frequency term yields \eqref{eq:locallow}.
	\end{proof}

These estimates allow us to control the frequency-localized nonlinearity away from the origin.

\begin{lemma}[The complete truncated nonlinearity]\label{lem:outer-frequency}
	Suppose that \eqref{eq:bootstrap-hyp} holds. Then, for every fixed $r>0$, every $R\ge r/64$, and every $\nu\ge1$,
	\begin{align}
		&\left\|P_\nu\left[(1-\eta(x/R))|x|^{-b}
		|u(t)|^{\frac{4-2b}{d}}u(t)\right]\right\|_{L_x^{\frac{2d}{d+4-2b}}}
		\le C_{u,r,s}R^{-b}\nu^{-s}.
		\label{eq:outer-frequency}
	\end{align}
	The constant is uniform in the radius $R$.
\end{lemma}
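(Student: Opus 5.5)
The plan is to exploit two facts: on the support of $1-\eta(x/R)$ we have $|x|\ge R$, so $|x|^{-b}\le R^{-b}$; and the target exponent satisfies
\[
\frac{d+4-2b}{2d}=\frac12+\frac{p}{2},\qquad p:=\frac{4-2b}{d}.
\]
So one factor of $u$ can be placed in $L_x^2$ and the remaining $|u|^p$ in $L_x^{2/p}$, using only mass conservation for the latter. Note that $q:=\frac{2d}{d+4-2b}>1$ since $b>\frac12$ and $d\ge3$, so $P_\nu$ is bounded on $L^q_x$ uniformly in $\nu$, and only ordinary Lebesgue H\"older estimates are needed (no Lorentz spaces). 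Write $\chi_R=1-\eta(x/R)$ and $G(z)=|z|^pz$. Fix the auxiliary radius $a=\frac{24}{25}\cdot\frac{r}{64}$. Then $\varphi_{>a}=1$ on $\{|x|\ge r/64\}\supset\supp\chi_R$, since $R\ge r/64$. Hence $\chi_R=\chi_R\varphi_{>a}$, and Lemma~\ref{lem:bootstrap-tools} will be applied with this fixed $a$, which depends only on $r$.

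I split $u=u_{lo}+u_{hi}$ with $u_{lo}=P_{\le\nu/8}u$ and $u_{hi}=P_{>\nu/8}u$. The pointwise bound
\[
|G(u)-G(u_{lo})|\lesssim |u_{hi}|\bigl(|u|^p+|u_{lo}|^p\bigr)
\]
controls the high part. By boundedness of $P_\nu$ on $L^q$ and H\"older,
\[
\bigl\|P_\nu[\chi_R|x|^{-b}(G(u)-G(u_{lo}))]\bigr\|_{L^q}\lesssim R^{-b}\,\|\varphi_{>a}u_{hi}\|_{L^2}\,\|u_0\|_{L^2}^p.
\]
This is $\lesssim_{u,r,s}R^{-b}\nu^{-s}$ by \eqref{eq:localhigh}; replacing $\nu$ by $\nu/8$ only changes constants, and for $\nu/8<1$ the trivial mass bound suffices.

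For the low part I use the last Bernstein inequality in Lemma~\ref{lem:bernstein} (valid on $L^q$):
\[
\bigl\|P_\nu[\chi_R|x|^{-b}G(u_{lo})]\bigr\|_{L^q}\lesssim\nu^{-1}\bigl\|\nabla[\chi_R|x|^{-b}G(u_{lo})]\bigr\|_{L^q}.
\]
This is legitimate since $u\in H^1$, so $u_{lo}\in H^1$ and $|\nabla G(u_{lo})|\lesssim|u_{lo}|^p|\nabla u_{lo}|$.

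When the derivative falls on the weight, I use $|\nabla(\chi_R|x|^{-b})|\lesssim R^{-b-1}\le \frac{64}{r}R^{-b}$ on $\{|x|\ge R\}$. Together with H\"older and mass conservation this gives $\lesssim_r R^{-b}\nu^{-1}\|u_0\|_{L^2}^{1+p}\le C R^{-b}\nu^{-s}$, since $s<1$.

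When the derivative falls on $G(u_{lo})$, I insert $\chi_R=\chi_R\varphi_{>a}$ and apply H\"older:
\[
\nu^{-1}R^{-b}\,\|\varphi_{>a}\nabla P_{\le\nu/8}u\|_{L^2}\,\|u_0\|_{L^2}^p\lesssim_{u,r,s}R^{-b}\nu^{-1}\nu^{1-s}=R^{-b}\nu^{-s},
\]
where the last step uses \eqref{eq:locallow}. Summing the contributions gives \eqref{eq:outer-frequency}, with a constant depending only on $u,r,s$ and uniform in $R\ge r/64$ and $t\ge0$.

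The main point needing care is that all spatial localizations are placed on the factor measured in $L^2$, through the identity $\chi_R=\chi_R\varphi_{>a}$, rather than through the frequency projections. The bootstrap hypothesis controls only $\varphi_{>a}P_\mu u$, so it is essential that the lower bound $R\ge r/64$ lets a single fixed $a$ depending only on $r$ work for all $R$.
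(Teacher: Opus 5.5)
Your proposal is correct and follows essentially the same route as the paper. Both arguments bound the high-frequency difference by H\"older, mass conservation and \eqref{eq:localhigh}, and both treat the low-frequency piece via $\|P_\nu f\|_{L^q}\lesssim\nu^{-1}\|\nabla f\|_{L^q}$, with the derivative falling either on the weight (giving $R^{-b-1}\lesssim_r R^{-b}$) or on the nonlinearity (handled by \eqref{eq:locallow}). Two differences are minor: your cutoff at $\nu/8$ instead of $\nu$ is cosmetic (for $\nu<8$ both pieces are trivially controlled by Bernstein and mass), and your identity $\chi_R=\chi_R\varphi_{>a}$, with $a$ depending only on $r$, makes explicit the uniformity in $R$ that the paper leaves implicit when it passes from $\ind_{|x|\ge R}$ to Lemma~\ref{lem:bootstrap-tools}.
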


\begin{proof}
	By H\"older's inequality and Lemma~\ref{lem:bootstrap-tools}, we obtain
	\begin{align}
		&\Bigl\|P_\nu\Bigl[(1-\eta(x/R))|x|^{-b}
		\bigl(|u(t)|^{\frac{4-2b}{d}}u(t) 
		-|P_{\le\nu}u(t)|^{\frac{4-2b}{d}}P_{\le\nu}u(t)\bigr)\Bigr]
		\Bigr\|_{L_x^{\frac{2d}{d+4-2b}}}\notag\\
		&\quad\le C R^{-b}\|u_0\|_{L_x^2}^{\frac{4-2b}{d}}
		\|\ind_{|x|\ge R}P_{>\nu}u(t)\|_{L_x^2}\notag\\
		&\quad\le C_{u,r,s}R^{-b}\nu^{-s}.
		\label{eq:outer-difference}
	\end{align}
	
	For the low-frequency contribution, Bernstein's and H\"older's inequalities, together with Lemma~\ref{lem:bootstrap-tools}, yield
	\begin{align}
		&\Bigl\|P_\nu\Bigl[(1-\eta(x/R))|x|^{-b}
		|P_{\le\nu}u(t)|^{\frac{4-2b}{d}}P_{\le\nu}u(t)\Bigr]
		\Bigr\|_{L_x^{\frac{2d}{d+4-2b}}}\notag\\
		&\quad\le C\nu^{-1}R^{-b-1}
		\|u_0\|_{L_x^2}^{1+\frac{4-2b}{d}} +C\nu^{-1}R^{-b}\|u_0\|_{L_x^2}^{\frac{4-2b}{d}}
		\|\ind_{|x|\ge R}\nabla P_{\le\nu}u(t)\|_{L_x^2}\notag\\
		&\quad\le C_{u,r,s}\bigl(R^{-b-1}\nu^{-1}+R^{-b}\nu^{-s}\bigr)\notag\\
		&\quad\le C'_{u,r,s}R^{-b}\nu^{-s}.
		\label{eq:outer-low}
	\end{align}
	Combining \eqref{eq:outer-difference} and \eqref{eq:outer-low} completes the proof of Lemma \ref{lem:outer-frequency}.
\end{proof}

We next integrate the exterior nonlinear estimate in time. The spatial decay of the coefficient, together with the dual Strichartz estimates, yields the positive frequency gain needed for the bootstrap.

	\begin{lemma}[Exterior Duhamel terms]\label{lem:outer-time}
		Assume \eqref{eq:bootstrap-hyp}, and let $\varepsilon$ be as in \eqref{eq:epsilon}. Then
		\begin{equation}\label{eq:outer-plus}
			\begin{split}
				&\Biggl\|\int_0^\infty\Pi_{\nu,r}^+e^{-i\tau\Delta}
				\Biggl[\left(1-\eta\!\left(\frac{64x}{r+\nu\tau}\right)\right)
				|x|^{-b} 
				|u(t+\tau)|^{\frac{4-2b}{d}}u(t+\tau)\Biggr]\dd\tau\Biggr\|_{L_x^2}
				\le C_{u,r,s}\nu^{-s-2\varepsilon}.
			\end{split}
		\end{equation}
 The analogous past integral, with $\Pi_{\nu,r}^-e^{i\tau\Delta}$, $u(t-\tau)$, and $0\le\tau\le t$, satisfies the same estimate uniformly in $t$.
	\end{lemma}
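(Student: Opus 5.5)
The plan is to remove the angular projection and then apply the dual Strichartz estimate to the frequency-localized exterior nonlinearity. The weight $R(\tau)^{-b}$ from Lemma~\ref{lem:outer-frequency}, with $R(\tau)=(r+\nu\tau)/64$, supplies integrability in $\tau$, and that integrability turns into a negative power of $\nu$.

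\emph{Reduction.} By \eqref{eq:Pi-factor}, and since $P_\nu$ commutes with $e^{-i\tau\Delta}$,
\[
\Pi_{\nu,r}^+e^{-i\tau\Delta}F(\tau)=\widetilde\Pi_{\nu,r}^+e^{-i\tau\Delta}P_\nu F(\tau).
\]
The operator $\widetilde\Pi_{\nu,r}^+$ does not depend on $\tau$, so it can be pulled outside the time integral. Lemma~\ref{lem:PiL2} then reduces \eqref{eq:outer-plus} to bounding
\[
\Bigl\|\int_0^\infty e^{-i\tau\Delta}P_\nu F(\tau)\dd\tau\Bigr\|_{L_x^2},
\qquad
F(\tau)=(1-\eta(x/R(\tau)))|x|^{-b}|u(t+\tau)|^{\frac{4-2b}{d}}u(t+\tau).
\]
Since $R(\tau)\ge r/64$, Lemma~\ref{lem:outer-frequency} applies at each $\tau$. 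Because its constant is uniform in $R$ and the bootstrap hypothesis \eqref{eq:bootstrap-hyp} is uniform in time, we obtain
\[
\|P_\nu F(\tau)\|_{L_x^{p}}\le C_{u,r,s}(r+\nu\tau)^{-b}\nu^{-s},\qquad p=\tfrac{2d}{d+4-2b},
\]
uniformly in $t$ and $\tau$. The improper integral is understood as the $L^2$ limit of integrals over $[0,T]$. Applying the bound below to $[T_1,T_2]$ shows these form a Cauchy family, so the limit exists.

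\emph{Case $1\le b<2$.} Here $p'$ is part of an admissible pair: the dual exponent $r_0$ satisfies $1/r_0=(d-4+2b)/(2d)$, and the admissibility relation gives $2/q=2-b$, i.e.\ $q=2/(2-b)\in[2,\infty)$. So $q'=2/b$. The last estimate of Proposition~\ref{prop:strichartz} gives a bound of
\[
\nu^{-s}\Bigl(\int_0^\infty (r+\nu\tau)^{-2}\dd\tau\Bigr)^{b/2}\lesssim_r \nu^{-s-b/2}=\nu^{-s-2\varepsilon},
\]
with $\varepsilon=b/4$.

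\emph{Case $\frac12<b\le1$.} I would first use Bernstein (Lemma~\ref{lem:bernstein}, via $\widetilde P_\nu$) to pass from $L^p$ to the endpoint dual exponent $\frac{2d}{d+2}\ge p$. This costs $\nu^{d(\frac{d+4-2b}{2d}-\frac{d+2}{2d})}=\nu^{1-b}$. The endpoint dual Strichartz estimate with $L^2_\tau$ (valid since $d\ge3$) then gives
\[
\nu^{1-b-s}\Bigl(\int_0^\infty (r+\nu\tau)^{-2b}\dd\tau\Bigr)^{1/2}\lesssim_r\nu^{1-b-s}\nu^{-1/2}=\nu^{-s-(b-\frac12)}=\nu^{-s-2\varepsilon}.
\]
The $\tau$-integral converges precisely because $2b>1$; this is the step where the restriction $b>\frac12$ is used. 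The two cases agree at $b=1$.

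\emph{Past integral and main obstacle.} The past integral is handled identically, using $\widetilde\Pi_{\nu,r}^-e^{i\tau\Delta}$, the inputs $u(t-\tau)$ for $0\le\tau\le t$, and the same uniform pointwise bound. Extending by zero for $\tau>t$ gives the same estimate, independently of $t$. The only delicate point is choosing, for each range of $b$, the exponent bookkeeping (Bernstein loss versus time integrability) so that the $\tau$-integral converges and the net gain is exactly $2\varepsilon$. Everything else follows from Lemmas~\ref{lem:PiL2} and \ref{lem:outer-frequency}.
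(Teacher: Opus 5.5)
Your proposal is correct and follows the paper's proof essentially step for step. Both use the factorization $\Pi_{\nu,r}^+=\widetilde\Pi_{\nu,r}^+P_\nu$ and Lemma~\ref{lem:outer-frequency} with the moving radius $(r+\nu\tau)/64$; for $1\le b<2$ both take the dual pair $(2/b,\frac{2d}{d+4-2b})$, and for $\frac12<b\le1$ both combine Bernstein (costing $\nu^{1-b}$) with the endpoint dual Strichartz estimate against the weight $\|(r+\nu\tau)^{-b}\|_{L^2_\tau}$, and both handle the improper integral as a Cauchy family and the past integral by extending the source by zero outside $[0,t]$.
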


\begin{proof}
	We first consider the integral with a finite upper limit $T$. By the factorization \eqref{eq:Pi-factor}, we have
	\begin{equation} 
		\begin{split}
			&\int_0^T\Pi_{\nu,r}^+e^{-i\tau\Delta}
			\Biggl[\left(1-\eta\!\left(\frac{64x}{r+\nu\tau}\right)\right)
			|x|^{-b}|u(t+\tau)|^{\frac{4-2b}{d}}u(t+\tau)\Biggr]\dd\tau\\
			&\quad=\widetilde\Pi_{\nu,r}^+
			\int_0^T e^{-i\tau\Delta}P_\nu
			\Biggl[\left(1-\eta\!\left(\frac{64x}{r+\nu\tau}\right)\right)
			|x|^{-b} 
			|u(t+\tau)|^{\frac{4-2b}{d}}u(t+\tau)\Biggr]\dd\tau.
		\end{split}\notag
	\end{equation}
	We now apply Lemma~\ref{lem:outer-frequency} with the moving radius $R=(r+\nu\tau)/64$, distinguishing two cases according to the value of $b$.
	
	\textit{Case 1: $1\le b<2$.}
	We choose the following admissible Strichartz pair and its dual:
	\[
	\left(\frac{2}{2-b},\frac{2d}{d-4+2b}\right),
	\qquad
	\left(\frac{2}{b},\frac{2d}{d+4-2b}\right),
	\]
	respectively. Applying the dual Strichartz estimate in Proposition~\ref{prop:strichartz}, together with Lemma~\ref{lem:outer-frequency}, we obtain
	\begin{equation} 
		\begin{split}
			&\Biggl\|\int_0^T\Pi_{\nu,r}^+e^{-i\tau\Delta}
			\Biggl[\left(1-\eta\!\left(\frac{64x}{r+\nu\tau}\right)\right)
			|x|^{-b} 
			|u(t+\tau)|^{\frac{4-2b}{d}}u(t+\tau)\Biggr]\dd\tau\Biggr\|_{L_x^2}\\
			&\quad\le C_{u,r,s}\nu^{-s}
			\|(r+\nu\tau)^{-b}\|_{L_\tau^{2/b}(0,T)}\\
			&\quad\le C_{u,r,s}r^{-b/2}\nu^{-s-b/2}.
		\end{split}\notag
	\end{equation}
	This establishes \eqref{eq:outer-plus} with $\varepsilon=b/4$.
	
	\textit{Case 2: $1/2<b<1$.}
	In this case, the time exponent $2/(2-b)$ is less than $2$, so we instead use the admissible endpoint $(2,2d/(d-2))$, whose dual spatial exponent is $2d/(d+2)$. For the frequency-localized output, Bernstein's inequality gives
	\begin{align}
		&\left\|P_\nu\left[(1-\eta(x/R))|x|^{-b}
		|u(t)|^{\frac{4-2b}{d}}u(t)\right]\right\|_{L_x^{\frac{2d}{d+2}}}\notag\\
		&\quad\le C\nu^{1-b}
		\left\|P_\nu\left[(1-\eta(x/R))|x|^{-b}
		|u(t)|^{\frac{4-2b}{d}}u(t)\right]\right\|_{L_x^{\frac{2d}{d+4-2b}}}\notag\\
		&\quad\le C_{u,r,s}R^{-b}\nu^{1-b-s}.
		\label{eq:outer-bernstein}
	\end{align}
	The corresponding time integral can be evaluated explicitly:
	\begin{equation}\label{eq:weight-time-small}
		\begin{aligned}
			\|(r+\nu\tau)^{-b}\|_{L_\tau^2(0,\infty)}
			&=\left(\int_0^\infty(r+\nu\tau)^{-2b}\dd\tau\right)^{1/2}\\
			&=\left(\frac{r^{1-2b}}{\nu(2b-1)}\right)^{1/2}
			=\frac{r^{1/2-b}}{\sqrt{2b-1}}\nu^{-1/2}.
		\end{aligned}
	\end{equation}
	This is precisely where the assumption $b>1/2$ is needed. Combining \eqref{eq:outer-bernstein} with the endpoint dual Strichartz estimate, we obtain
	\begin{equation}\label{eq:outer-smallb}
		\begin{split}
			&\Biggl\|\int_0^T\Pi_{\nu,r}^+e^{-i\tau\Delta}
			\Biggl[\left(1-\eta\!\left(\frac{64x}{r+\nu\tau}\right)\right)
			|x|^{-b} 
			|u(t+\tau)|^{\frac{4-2b}{d}}u(t+\tau)\Biggr]\dd\tau\Biggr\|_{L_x^2}\\
			&\quad\le C_{u,r,s}\nu^{1-b-s}
			\|(r+\nu\tau)^{-b}\|_{L_\tau^2(0,T)}\\
			&\quad\le C_{u,r,s}\frac{r^{1/2-b}}{\sqrt{2b-1}}
			\nu^{-s-(b-1/2)}.
		\end{split}
	\end{equation}
	This proves \eqref{eq:outer-plus} with $2\varepsilon=b-1/2$.
	
	In both cases, the estimates are uniform in $T$. Applying the same argument over $[T_1,T_2]$, we observe that the scalar weight belongs to $L_\tau^{2/b}$ in Case~1 and to $L_\tau^2$ in Case~2. Consequently, its norm over $[T_1,\infty)$ tends to zero as $T_1\to\infty$. It follows that the integrals with finite upper limits form a Cauchy family in $L_x^2$ and hence converge strongly.
	
	For the past contribution, we use $\Pi^-_{\nu,r}=\widetilde\Pi^-_{\nu,r}P_\nu$ and the sign-reversed dual Strichartz estimate, extending the source by zero outside $[0,t]$. The pointwise spatial bound remains \eqref{eq:outer-frequency}, while the scalar time factors are controlled by  \eqref{eq:weight-time-small}. This yields the same estimate, with a constant independent of $t$.
\end{proof}

	\subsection{Proof of Theorem~\ref{thm:exterior}}
	
	We are now ready to prove the exterior frequency decay estimate. Combining the interior and exterior estimates, we first obtain an improvement of the bootstrap hypothesis. Iterating this improvement yields the desired decay.

\begin{proof}[Proof of Theorem~\ref{thm:exterior}]
	We first consider the case $\nu r\ge1$. Apply the reduced Duhamel formula to $\Pi^+_{\nu,r}u(t)$ and the ordinary Duhamel formula starting at time $0$ to $\Pi^-_{\nu,r}u(t)$. By \eqref{eq:Pi-sum}, we obtain
	\begin{align}
		\varphi_{>r}P_\nu u(t)
		={}&-i\int_0^\infty\Pi^+_{\nu,r}e^{-i\tau\Delta}
		[|x|^{-b}|u(t+\tau)|^{\frac{4-2b}{d}}u(t+\tau)]\dd\tau\notag\\
		&+\Pi^-_{\nu,r}e^{it\Delta}u_0\notag\\
		&+i\int_0^t\Pi^-_{\nu,r}e^{i\tau\Delta}
		[|x|^{-b}|u(t-\tau)|^{\frac{4-2b}{d}}u(t-\tau)]\dd\tau.
		\label{eq:splitDuhamel}
	\end{align}
	We then insert the spatial partition $1=\eta(64x/(r+\nu\tau))+[1-\eta(64x/(r+\nu\tau))]$ into both nonlinear source terms. Assuming \eqref{eq:bootstrap-hyp} and choosing  $m>s+2\varepsilon$, Lemmas~\ref{lem:inner} and \ref{lem:outer-time} show that the first and third terms on the right-hand side of \eqref{eq:splitDuhamel} are bounded in $L_x^2$ by $C_{u,r,s}\nu^{-s-2\varepsilon}$. Consequently,
	\begin{equation}\label{eq:bootstrap-step}
		\sup_{t\ge0}\|\varphi_{>r}P_\nu u(t)\|_{L_x^2}
		\le C_{u,r,s}\nu^{-s-2\varepsilon}+C\|P_\nu u_0\|_{L_x^2}.
	\end{equation}
	
	Since $u_0\in H^1$, we have
	$\|P_\nu u_0\|_{L_x^2}\le C\nu^{-1}\|\nabla u_0\|_{L_x^2}$. Thus, \eqref{eq:bootstrap-step} implies \eqref{eq:bootstrap-hyp} for every exponent $\sigma$ satisfying
	\[
	0\le\sigma<1,\qquad \sigma\le s+2\varepsilon.
	\]
	Here, each bootstrap step may require estimates at a smaller fixed positive radius. Since \eqref{eq:bootstrap-hyp} is assumed to hold for every such radius, this causes no difficulty, and no uniformity as $r\downarrow0$ is required.
	
	Starting with $s=0$, we increase the exponent by at most $2\varepsilon$ at each step. Since $\varepsilon>0$, finitely many iterations suffice to reach $s=1-\varepsilon<1$. Applying \eqref{eq:bootstrap-step} once more with $s=1-\varepsilon$ yields \eqref{eq:exterior} whenever $\nu r\ge1$.
	
	Finally, for each fixed $r>0$, there are only finitely many dyadic frequencies $\nu\ge1$ satisfying $\nu r<1$. For these frequencies, the left-hand side of \eqref{eq:exterior} is bounded by $C\|u_0\|_{L_x^2}$. Enlarging the constant $C_{u,r}$ if necessary, we obtain \eqref{eq:exterior} for all $\nu\ge1$, as desired.
\end{proof}

	\section{Kinetic-energy localization and rigidity}\label{sec:kinetic}
	
	In this section, we establish the rigidity of almost-periodic solutions at the ground-state mass. We first use the exterior frequency decay estimate from Section~\ref{sec:exterior} to localize their kinetic energy. We then combine the resulting localization estimates with a localized virial argument to show that the energy must vanish, leading to the desired classification.
	
	\subsection{Kinetic-energy localization}\label{Kinetic-energy localization}
	
	Almost periodicity provides spatial localization of the mass, but does not by itself ensure localization of the kinetic energy. The exterior frequency decay estimate allows us to obtain the additional control needed for the following result.

	\setcounter{theorem}{1}
	\begin{theorem}[Kinetic-energy localization]\label{thm:kinetic}
		Assume the hypotheses of Theorem~\ref{thm:exterior}. Suppose in addition that $u$ is almost periodic modulo scaling on $[0,\infty)$, with frequency scale $N(t)$. For every $\delta>0$ there exists $C(\delta)>0$ such that
		\begin{equation}\label{eq:kinetic-localization}
			\sup_{t\ge0}
			\|\nabla u(t)\|_{L_x^2(|x|>C(\delta)\sqrt{1+N(t)^{-2}})}<\delta.
		\end{equation}
	\end{theorem}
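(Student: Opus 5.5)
We split the gradient into low and high frequencies at a large dyadic threshold $\nu_0$ and treat the two pieces by different mechanisms. Fix $\delta>0$ and a cutoff radius $R$ with $R\ge C\sqrt{1+N(t)^{-2}}$, to be chosen. We write
\[
\nabla u(t)=\nabla P_{\le\nu_0}u(t)+\sum_{\nu>\nu_0}\nabla P_\nu u(t),
\]
and estimate each piece on $\{|x|>R\}$. The high frequencies will be controlled by Theorem~\ref{thm:exterior}, uniformly in $t$. The low frequencies will be controlled by the spatial mass localization \eqref{eq:APtails}, via Bernstein and the mismatch estimates of Lemma~\ref{lem:mismatch}.

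\emph{High frequencies.} Fix a radius $r=1$, say. Since $R\ge C\ge 2r$, for each $\nu$ we insert $\widetilde P_\nu P_\nu=P_\nu$ and split the input as $\varphi_{>1}P_\nu u+\varphi_{\le 1}P_\nu u$. By Bernstein and Theorem~\ref{thm:exterior}, the first piece contributes
\[
\nu\bigl(C_{u}\nu^{-1-\varepsilon}+C\|P_\nu u_0\|_{L^2_x}\bigr).
\]
For the second piece, the mismatch estimate on $\{|x|>R\}\supset\{|x|>2\}$ gives $C_m\nu(\nu)^{-m}\|u_0\|_{L^2_x}$. Summing over $\nu>\nu_0$ yields the bound
\[
C_u\nu_0^{-\varepsilon}+C\|P_{>\nu_0/2}\nabla u_0\|_{L^2_x}+C\nu_0^{-1}.
\]
Here the second term uses almost-orthogonality, $\bigl(\sum_\nu\nu^2\|P_\nu u_0\|^2\bigr)^{1/2}$, and to get an $\ell^1$ sum we should replace the plain triangle inequality by orthogonality. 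This is the point that requires care: Theorem~\ref{thm:exterior} only gives $\ell^\infty$ bounds per band. I would handle it by running the exterior Duhamel decomposition \eqref{eq:splitDuhamel} band by band. The linear term $\Pi^-_{\nu,1}e^{it\Delta}u_0$ summed over $\nu$ has square-function control by $\|P_{>\nu_0/2}\nabla u_0\|_{L^2}$, since $\widetilde\Pi^-$ is uniformly bounded. The nonlinear terms already carry the summable factor $\nu^{-\varepsilon}$. Since $u_0\in H^1$, we can choose $\nu_0=\nu_0(\delta,u)$ so that this whole high-frequency contribution is $<\delta/2$, uniformly in $t$.

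\emph{Low frequencies.} With $\nu_0$ now fixed, we split $u(t)$ spatially at radius $R/2$. The outer part gives, by Bernstein,
\[
\|\nabla P_{\le\nu_0}(\varphi_{>R/2}u)\|_{L^2_x}\lesssim\nu_0\|u(t)\|_{L^2(|x|>R/2)}.
\]
By \eqref{eq:APtails} this is $<\delta/4$ once $R/2\ge C(\eta)/N(t)$ with $\eta=\delta/(8C\nu_0)$. The inner part $\varphi_{>R}\nabla P_{\le\nu_0}(\varphi_{\le R/2}u)$ is, by Lemma~\ref{lem:mismatch}, bounded by
\[
C_m\nu_0(\nu_0R)^{-m}\|u_0\|_{L^2_x}\lesssim R^{-m}\nu_0^{1-m},
\]
which is small once $R$ exceeds a constant depending on $\delta$. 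Hence choosing
\[
C(\delta)\ge\max\{2C(\eta),\,C'(\delta,\nu_0),\,2\}
\]
makes $R=C(\delta)\sqrt{1+N(t)^{-2}}$ satisfy both $R\ge 2C(\eta)/N(t)$ and $R$ large, and \eqref{eq:kinetic-localization} follows.

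The main obstacle is the high-frequency summation. Theorem~\ref{thm:exterior} controls the data term only through $\|P_\nu u_0\|_{L^2}$ for each band, and a naive sum $\sum_\nu\nu\|P_\nu u_0\|$ need not be finite for $u_0\in H^1$. I will need the square-function form of the linear term, obtained by revisiting \eqref{eq:splitDuhamel}, rather than the stated per-band bound.
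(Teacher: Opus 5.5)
Your proposal is correct and is essentially the paper's argument: the paper proves Lemma~\ref{lem:kinetic-estimate} and then fixes the frequency cutoff before choosing the radius via \eqref{eq:APtails}, handling low frequencies by Bernstein, the mismatch estimates and mass localization, and high frequencies by Theorem~\ref{thm:exterior} together with the $H^1$ tail of $u_0$. The obstacle you single out is not real, and you do not need to reopen \eqref{eq:splitDuhamel}: once $\varphi_{>R}$ is pulled outside the sum, the pieces $\nabla\widetilde P_\nu(\varphi_{>1}P_\nu u(t))$ have boundedly overlapping Fourier supports, so squaring the stated per-band bound already gives $\sum_{\nu>\nu_0}\nu^2\|\varphi_{>1}P_\nu u(t)\|_{L_x^2}^2\lesssim_u\nu_0^{-2\varepsilon}+\|\nabla P_{>\nu_0/2}u_0\|_{L_x^2}^2$, which is exactly how the paper closes the high-frequency part via the output square function.
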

	
	To prove Theorem~\ref{thm:kinetic}, we use an auxiliary estimate that controls the exterior gradient in terms of the spatial mass tail and a high-frequency remainder. The latter is controlled by Theorem~\ref{thm:exterior} and the $H^1$ regularity of the initial data.
	
	\setcounter{theorem}{0}
	\begin{lemma}[A localized gradient estimate with an initial-data tail]
		\label{lem:kinetic-estimate}
		Under the hypotheses of Theorem~\ref{thm:exterior}, let $\varepsilon>0$ be the exponent fixed in \eqref{eq:epsilon}. For every dyadic $K\ge8$, every $R\ge4$, and every $t\ge0$, we have 
		\begin{align}
			\|\varphi_{>R}\nabla u(t)\|_{L_x^2}
			\le C_u\bigl(&K\|u(t)\|_{L_x^2(|x|>R/2)}+K^{-1}+K^{-\varepsilon} +\|\nabla P_{>K/8}u_0\|_{L_x^2}\bigr).
			\label{eq:kinetic-estimate}
		\end{align}
		The constant is independent of $t,K,R$.
	\end{lemma}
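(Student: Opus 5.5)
I will split $u(t)=P_{\le K}u(t)+P_{>K}u(t)$ and estimate $\varphi_{>R}\nabla$ of each piece separately. Only the high-frequency piece uses Theorem~\ref{thm:exterior}. The low-frequency piece uses only the spatial mass tail.

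\emph{Low frequencies.} I write $P_{\le K}u=P_{\le K}(\varphi_{>R/2}u)+P_{\le K}(\varphi_{\le R/2}u)$. For the first term, Bernstein (Lemma~\ref{lem:bernstein}) gives
\[
\|\nabla P_{\le K}(\varphi_{>R/2}u)\|_{L_x^2}\lesssim K\|u(t)\|_{L_x^2(|x|>R/2)}.
\]
For the second term, the gradient mismatch estimate (Lemma~\ref{lem:mismatch}) with $m=2$ gives
\[
\|\varphi_{>R}\nabla P_{\le K}(\varphi_{\le R/2}u)\|_{L_x^2}\lesssim K(KR)^{-2}\|u_0\|_{L_x^2}\lesssim K^{-1},
\]
since $R\ge4$. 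Here the cutoffs $\varphi_{>R/2}$ and $\varphi_{\le R/2}$ overlap only harmlessly; if needed I use a smooth partition adapted to the supports in Lemma~\ref{lem:mismatch}.

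\emph{High frequencies.} I write $\nabla P_{>K}u=\sum_{\nu>K}\nabla P_\nu u$. Using $\widetilde P_\nu P_\nu=P_\nu$, I split each piece as
\[
\varphi_{>R}\nabla\widetilde P_\nu\bigl[\varphi_{>R/2}P_\nu u\bigr]+\varphi_{>R}\nabla\widetilde P_\nu\bigl[\varphi_{\le R/2}P_\nu u\bigr].
\]
The second term is $O(\nu^{-1}\|u_0\|_{L^2_x})$ by mismatch, and summing over $\nu>K$ gives $K^{-1}$. The first term is bounded by Bernstein by $\nu\|\varphi_{>R/2}P_\nu u(t)\|_{L_x^2}$. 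Since $R/2\ge2$, Theorem~\ref{thm:exterior} applies with the fixed radius $r=2$ (applying it at a single fixed radius keeps $C_u$ independent of $R$; the cutoff $\varphi_{>R/2}$ is dominated via $\varphi_{>R/2}=\varphi_{>R/2}\varphi_{>2}$). This gives
\[
\nu\|\varphi_{>R/2}P_\nu u(t)\|_{L_x^2}\le C_u\nu^{-\varepsilon}+C\nu\|P_\nu u_0\|_{L_x^2}.
\]
Summing over dyadic $\nu>K$, the first term contributes $C_uK^{-\varepsilon}$.

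\emph{Main obstacle.} The initial-data term must be bounded by $\|\nabla P_{>K/8}u_0\|_{L_x^2}$ without losing a logarithm from the dyadic sum. The sum $\sum_{\nu>K}\nu\|P_\nu u_0\|_{L_x^2}$ is an $\ell^1$ sum, which is not controlled by the $\ell^2$ quantity $\|\nabla P_{>K/8}u_0\|_{L_x^2}$.

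To fix this, I will not apply Theorem~\ref{thm:exterior} termwise. Instead I return to its proof, the decomposition \eqref{eq:splitDuhamel}. There the linear term is $\Pi^-_{\nu,r}e^{it\Delta}u_0$, and after summing in $\nu$ it equals $\sum_\nu\widetilde\Pi^-_{\nu,r}e^{it\Delta}P_\nu u_0$. Each operator $\widetilde\Pi^-_{\nu,r}$ is bounded on $L^2$ (Lemma~\ref{lem:PiL2}), and the family has almost-orthogonal frequency supports. Using the kernel bounds from Lemma~\ref{lem:PiL2}, a Cotlar--Stein or square-function argument should give
\[
\Bigl\|\sum_{\nu>K}\nabla\,\widetilde\Pi^-_{\nu,r}e^{it\Delta}P_\nu u_0\Bigr\|_{L_x^2}\lesssim\|\nabla P_{>K/8}u_0\|_{L_x^2}.
\]
The nonlinear terms, by Lemmas~\ref{lem:inner} and \ref{lem:outer-time}, each carry a factor $\nu^{-1-\varepsilon}$, so after multiplying by $\nu$ and summing they give $K^{-\varepsilon}$. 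The restriction to $P_{>K/8}$ comes from the enlarged annular multipliers $\widetilde P_\nu$.

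If the almost-orthogonality argument fails, my fallback is to replace the sharp split at frequency $K$ by a smoother one. I would insert the $L^2$-bounded outgoing/incoming operators before summing, so that the linear contribution becomes a single operator applied to $\nabla P_{>K/8}u_0$.

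Combining the low- and high-frequency bounds gives \eqref{eq:kinetic-estimate}.
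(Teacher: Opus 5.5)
Your low-frequency bound and your termwise high-frequency bounds are correct. You have also located the real difficulty: once you take the norm of each $\varphi_{>R}\nabla P_\nu u(t)$ separately and sum by the triangle inequality, the initial-data contribution is the $\ell^1$ sum $\sum_{\nu>K}\nu\|P_\nu u_0\|_{L_x^2}$, which is not controlled by $\|\nabla P_{>K/8}u_0\|_{L_x^2}$.

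Your repair, however, leaves a genuine gap. The bound for $\sum_{\nu>K}\nabla\widetilde\Pi^-_{\nu,r}e^{it\Delta}P_\nu u_0$ is only asserted. Lemma~\ref{lem:PiL2} is a Schur-test bound for a single band and says nothing about the summed operator, whose kernel is singular on the diagonal (Calder\'on--Zygmund type). The almost orthogonality in $\nu$ needed for Cotlar--Stein is not exact either, because the symbols of $\widetilde\Pi^\pm_{\nu,r}$ depend on $x$, so it would have to be proved. Moreover, that displayed sum is not what your decomposition produces. Your linear pieces are $\varphi_{>R}\nabla\widetilde P_\nu[\varphi_{>R/2}\,\Pi^-_{\nu,r}e^{it\Delta}u_0]$, and once you have applied Bernstein to each of them the $\ell^1$ loss has already occurred. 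The fallback is not specified precisely enough to check.

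The missing idea is simpler and does not require reopening the proof of Theorem~\ref{thm:exterior}. The loss comes from the triangle inequality, not from applying the theorem termwise. The paper splits $\varphi_{>R}\nabla u$ in \emph{output} frequency and uses the Littlewood--Paley square-function estimate $\|P_{>K}(\varphi_{>R}\nabla u)\|_{L_x^2}^2\lesssim\sum_{\nu>K}\|P_\nu(\varphi_{>R}\nabla u)\|_{L_x^2}^2$. For each $\nu$:
\begin{itemize}
\item the frequency mismatch estimates of Lemma~\ref{lem:mismatch} dispose of input frequencies $\mu<\nu/4$ and $\mu>4\nu$, at cost $\lesssim_u\nu^{-1}R^{-2}$;
\item for the boundedly many comparable $\mu$, one uses exactly your bound $\|\varphi_{>R}\nabla P_\mu u\|_{L_x^2}\lesssim_u\mu^{-1}R^{-2}+\mu\|\varphi_{>R/2}P_\mu u\|_{L_x^2}$.
\end{itemize}
Squaring and summing leaves $\sum_{\mu>K/4}\mu^2\|\varphi_{>R/2}P_\mu u\|_{L_x^2}^2$. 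Theorem~\ref{thm:exterior}, applied termwise and squared, bounds each summand by $\mu^{-2\varepsilon}+\mu^2\|P_\mu u_0\|_{L_x^2}^2$, and these sum to $K^{-2\varepsilon}+\|\nabla P_{>K/8}u_0\|_{L_x^2}^2$.

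Within your own input-frequency split, the equivalent fix is to note that the pieces $\varphi_{>R}\nabla P_\nu u$ are almost orthogonal in $L_x^2$. Their cross inner products for $\nu\ge 8\mu$ are small by the same mismatch estimates, so your termwise bounds may be summed in $\ell^2$ rather than $\ell^1$.

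A minor point: the identity $\varphi_{>R/2}=\varphi_{>R/2}\varphi_{>2}$ fails for $4\le R<25/6$. You only need the pointwise bound $0\le\varphi_{>R/2}\le\varphi_{>2}$, which follows from the monotonicity of $\varphi$.
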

	\setcounter{theorem}{2}
	
	Assuming Lemma~\ref{lem:kinetic-estimate} for the moment, we first complete the proof of Theorem~\ref{thm:kinetic}.
	
	\begin{proof}[Proof of Theorem~\ref{thm:kinetic}]
		Fix $\delta>0$. Since $u_0\in H^1(\R^d)$, we may choose
		a sufficiently large dyadic $K\ge8$, independent of $t$, such that
		\[
		C_u\left(K^{-1}+K^{-\varepsilon}
		+\|\nabla P_{>K/8}u_0\|_{L_x^2}\right)<\frac{\delta}{2}.
		\]
		With $K$ fixed, almost periodicity \eqref{eq:APtails} allows us
		to choose $A\ge4$ sufficiently large so that
		\[
		\sup_{t\ge0}\|u(t)\|_{L_x^2(|x|>A/(2N(t)))}
		<\frac{\delta}{2C_uK}.
		\]
		For each $t\ge0$, set
		\[
		R(t)=A\sqrt{1+N(t)^{-2}}
		\ge\max\left\{4,\frac{A}{N(t)}\right\}.
		\]
		Applying Lemma~\ref{lem:kinetic-estimate} with this choice of
		$R(t)$, we obtain
		\[
		\sup_{t\ge0}
		\|\varphi_{>R(t)}\nabla u(t)\|_{L_x^2}<\delta.
		\]
		This completes the proof of  \eqref{eq:kinetic-localization}. 	 
	\end{proof}

	We now prove Lemma~\ref{lem:kinetic-estimate} by separating the low- and high-frequency contributions. The argument follows the approach in \cite[Section~3]{KLVZ09}; we include the details for completeness.

	\begin{proof}[Proof of Lemma~\ref{lem:kinetic-estimate}]
		We decompose $\varphi_{>R}\nabla u$ into its $P_{\le K}$ and $P_{>K}$ components. For the low-frequency component, Bernstein's inequality and the mismatch estimates in Lemma~\ref{lem:mismatch} yield
		\begin{align*}
			\|P_{\le K}(\varphi_{>R}\nabla u)\|_{L_x^2}
			&\lesssim \|\varphi_{>R}\nabla P_{\le4K}(\varphi_{\le R/2}u)\|_{L_x^2}
			+K\|\varphi_{>R/2}u\|_{L_x^2}\\
			&\quad+\sum_{\mu>4K}
			\|P_{\le K}\varphi_{>R}\nabla P_\mu u\|_{L_x^2}\\
			&\lesssim_u K^{-1}R^{-2}
			+K\|u\|_{L_x^2(|x|>R/2)}.
		\end{align*}
		For the high-frequency component, the Littlewood--Paley square-function estimate gives
		\[
		\|P_{>K}(\varphi_{>R}\nabla u)\|_{L_x^2}^2
		\lesssim
		\sum_{\nu>K}
		\|P_\nu(\varphi_{>R}\nabla u)\|_{L_x^2}^2.
		\]
		For each $\nu>K$, we split the input frequencies into three ranges:
		$\mu<\nu/4$, $\nu/4\le\mu\le4\nu$, and $\mu>4\nu$.
		By the frequency mismatch estimate in Lemma~\ref{lem:mismatch},
		\[
		\begin{aligned}
			&\sum_{\mu<\nu/4}
			\|P_\nu\varphi_{>R}\nabla P_\mu u\|_{L_x^2}
			+\sum_{\mu>4\nu}
			\|P_\nu\varphi_{>R}\nabla P_\mu u\|_{L_x^2}\\
			&\qquad\lesssim_u
			R^{-2}
			\left(
			\nu^{-2}\sum_{\mu<\nu/4}\mu
			+\sum_{\mu>4\nu}\mu^{-1}
			\right)
			\lesssim_u \nu^{-1}R^{-2}.
		\end{aligned}
		\]
		For comparable frequencies, we insert
		$\widetilde P_\mu P_\mu=P_\mu$ and apply the spatial mismatch estimate together with Bernstein's inequality to obtain
		\[
		\begin{aligned}
			\|\varphi_{>R}\nabla P_\mu u\|_{L_x^2}
			&\le
			\|\varphi_{>R}\nabla\widetilde P_\mu
			(\varphi_{\le R/2}P_\mu u)\|_{L_x^2}\\
			&\quad+
			\|\varphi_{>R}\nabla\widetilde P_\mu
			(\varphi_{>R/2}P_\mu u)\|_{L_x^2}\\
			&\lesssim_u
			\mu^{-1}R^{-2}
			+\mu\|\varphi_{>R/2}P_\mu u\|_{L_x^2}.
		\end{aligned}
		\]
		Consequently, summing over the output frequencies and applying Theorem~\ref{thm:exterior}, we obtain
		\[
		\begin{aligned}
			\|P_{>K}(\varphi_{>R}\nabla u)\|_{L_x^2}^2
			&\quad\lesssim_u
			R^{-4}\sum_{\nu>K}\nu^{-2}
			+\sum_{\nu>K}
			\sum_{\nu/4\le\mu\le4\nu}
			\mu^2\|\varphi_{>R/2}P_\mu u\|_{L_x^2}^2\\
			&\quad\lesssim_u
			K^{-2}R^{-4}
			+\sum_{\mu>K/4}
			\mu^2\|\varphi_{>R/2}P_\mu u\|_{L_x^2}^2.\\
			&\quad\lesssim_u K^{-2}R^{-4}+K^{-2\varepsilon}
			+\sum_{\mu>K/4}\mu^2\|P_\mu u_0\|_{L_x^2}^2\\
			&\quad\lesssim_u K^{-2}R^{-4}+K^{-2\varepsilon}
			+\|\nabla P_{>K/8}u_0\|_{L_x^2}^2,
		\end{aligned}
		\]
		which, upon taking square roots, completes the proof of \eqref{eq:kinetic-estimate}.
	\end{proof}

	The localized virial argument also requires control of the nonlinear potential-energy tail. The following estimate, based on the sharp Gagliardo--Nirenberg inequality, provides this control in terms of localized mass and kinetic energy.

	\begin{lemma}[Localization of potential energy]\label{lem:potential-tail}
		For any $R>0$ and $u(t)\in H^1(\R^d)$, we have
		\begin{equation} 
			\begin{aligned}
				&\int_{|x|>R}|x|^{-b}|u(t,x)|^{2+\frac{4-2b}{d}}\dd x\lesssim
				\|u(t)\|_{L_x^2(|x|>R/2)}^{\frac{4-2b}{d}}
				\bigl(\|\nabla u(t)\|_{L_x^2(|x|>R/2)}^2+R^{-2}M(u)\bigr).
			\end{aligned}\notag
		\end{equation}
	\end{lemma}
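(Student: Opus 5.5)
The plan is to localize $u$ with a cutoff and then apply the sharp weighted Gagliardo--Nirenberg inequality \eqref{eq:sharp} to the localized function. Let $\chi=\varphi_{>R/2}$ be the exterior cutoff. Then $\chi\equiv1$ on $\{|x|>R\cdot 25/48\}$, and in particular on $\{|x|>R\}$, while $\supp\chi\subset\{|x|\ge R/2\}$. Also $|\nabla\chi|\lesssim R^{-1}\ind_{R/2\le|x|\le 25R/48}$. Set $f=\chi u(t)$; this lies in $H^1(\R^d)$ because $u(t)\in H^1$.

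Since $f=u$ on $|x|>R$, the integral we want to bound is at most
\[
\int_{\R^d}|x|^{-b}|f|^{2+\frac{4-2b}{d}}\dd x .
\]
We bound this by \eqref{eq:sharp}, keeping the mass factor with its actual power rather than normalizing it:
\[
\int|x|^{-b}|f|^{2+\frac{4-2b}{d}}\dd x\le\frac{d+2-b}{d}\,\|Q\|_{L^2}^{-\frac{4-2b}{d}}\,\|f\|_{L_x^2}^{\frac{4-2b}{d}}\,\|\nabla f\|_{L_x^2}^2 .
\]

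Both factors on the right are now controlled by data on $\{|x|>R/2\}$. For the mass factor, $0\le\chi\le1$ gives $\|f\|_{L^2}\le\|u(t)\|_{L^2(|x|>R/2)}$. For the gradient, write $\nabla f=\chi\nabla u+u\nabla\chi$. The first piece is bounded by $\|\nabla u(t)\|_{L^2(|x|>R/2)}$. The second satisfies $\|u\nabla\chi\|_{L^2}\lesssim R^{-1}\|u\|_{L^2}=R^{-1}M(u)^{1/2}$. Squaring and using $(a+b)^2\le2a^2+2b^2$ gives
\[
\|\nabla f\|_{L^2}^2\lesssim\|\nabla u(t)\|_{L^2(|x|>R/2)}^2+R^{-2}M(u),
\]
and multiplying by the mass factor yields the claimed bound. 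The implicit constant depends only on $d$, $b$, $Q$ and $\varphi$.

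None of these steps is a real obstacle. The one point that needs care is to apply \eqref{eq:sharp} to the cutoff function $f$, not to $u$, so that the mass factor is the exterior mass $\|u(t)\|_{L^2(|x|>R/2)}$ (which almost periodicity makes small) rather than the full mass $M(u)$. The singular weight $|x|^{-b}$ causes no trouble here because $f$ vanishes near the origin. In any case \eqref{eq:sharp} holds for every $H^1$ function, so no separate treatment of the singularity is needed.
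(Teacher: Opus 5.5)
Your proposal is correct and follows the paper's proof essentially verbatim: apply the sharp weighted Gagliardo--Nirenberg inequality \eqref{eq:sharp} to $\varphi_{>R/2}u(t)$, which coincides with $u(t)$ on $\{|x|>R\}$. Then bound its mass and gradient by the exterior quantities plus $R^{-2}M(u)$; your check of the cutoff's support properties supplies the details the paper leaves implicit.
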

	
	\begin{proof}
		The  estimate follows directly from the sharp
		Gagliardo--Nirenberg inequality \eqref{eq:sharp}:
		\begin{align*}
			&\int_{|x|>R}|x|^{-b}|u(t,x)|^{2+\frac{4-2b}{d}}\dd x\\
			&\quad\le \int_{\R^d}|x|^{-b}|\varphi_{>R/2}u(t)|^{2+\frac{4-2b}{d}}\dd x\\
			&\quad\lesssim
			\|\varphi_{>R/2}u(t)\|_{L_x^2}^{\frac{4-2b}{d}}
			\|\nabla(\varphi_{>R/2}u(t))\|_{L_x^2}^2\\
			&\quad\lesssim
			\|u(t)\|_{L_x^2(|x|>R/2)}^{\frac{4-2b}{d}}
			\bigl(\|\nabla u(t)\|_{L_x^2(|x|>R/2)}^2+R^{-2}M(u)\bigr).
		\end{align*}
		
	\end{proof}

	\subsection{Rigidity via the localized virial identity}\label{sec:virial}
	
	We now combine the localization estimates from the preceding subsection with a localized virial argument. The key observation is that positive energy would force the second derivative of a suitable virial functional to remain positive, while its first derivative admits a bound that is incompatible with such growth. We begin by recalling the localized virial identities.
	
	Choose a real smooth radial function $a\in C_c^\infty(\R^d)$ such that $a(x)=|x|^2$ for $|x|\le1$ and $a(x)=0$ for $|x|\ge2$. For $R>0$, set
	\begin{equation}\label{eq:virial-weight}
		a_R(x)=R^2a(x/R),\qquad
		V_R(t)=\int a_R(x)|u(t,x)|^2\dd x.
	\end{equation}
	The following lemma records the localized virial identities and the error arising from spatial truncation.
	
	\begin{lemma}[Localized virial identities, {\cite[Corollary~3.2]{Di18}}]
		\label{lem:virial} Let  $d\ge3$ and $ \frac12<b<\min\left\{2,\frac d2\right\}$.  
		For an $H^1$ strong solution of \eqref{eq:inls}, $V_R$ is twice continuously differentiable on its lifespan and
		\begin{equation}\label{eq:virial-first}
			V_R'(t)=2\int\nabla a_R\cdot\Ima(\overline u\nabla u)\dd x,
		\end{equation}
		\begin{align}
			V_R''(t)
			={}&4\sum_{j,k=1}^d\int\partial_{jk}a_R\,
			\Rea(\partial_j\overline u\,\partial_ku)\dd x
			-\int\Delta^2a_R|u|^2\dd x\notag\\
			&-\frac{2(2-b)}{d+2-b}
			\int\Delta a_R\,|x|^{-b}|u|^{2+\frac{4-2b}{d}}\dd x\notag\\
			&-\frac{2db}{d+2-b}
			\int\frac{x\cdot\nabla a_R}{|x|^{b+2}}
			|u|^{2+\frac{4-2b}{d}}\dd x.
			\label{eq:virial-second}
		\end{align}
		Moreover, substituting \eqref{eq:virial-weight} into \eqref{eq:virial-second},
		one obtains 
		\begin{align}
			|V_R''(t)-16E(u)|\le C\bigg[&R^{-2}M(u)
			+\int_{|x|>R}|\nabla u(t,x)|^2\dd x +\int_{|x|>R}|x|^{-b}|u(t,x)|^{2+\frac{4-2b}{d}}\dd x\bigg].
			\label{eq:virial-error}
		\end{align}
		The constant is independent of $t,R$.
	\end{lemma}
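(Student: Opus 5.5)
The identity \eqref{eq:virial-first} and the formula \eqref{eq:virial-second} are standard. I would take them from \cite[Corollary~3.2]{Di18}, justifying them first for smooth or finite-variance data and then passing to general $H^1$ solutions by continuous dependence in $H^1$. The weight $a_R$ is compactly supported and smooth, so every term in \eqref{eq:virial-second} is continuous in $t$. The potential term $\int|x|^{-b}|u|^{2+\frac{4-2b}{d}}$ is finite and continuous along $C_tH^1_x$ by the weighted Gagliardo--Nirenberg inequality \eqref{eq:sharp}. The last term is harmless because $|x\cdot\nabla a_R|\lesssim|x|^2$ near the origin, so the factor $|x|^{-b-2}$ reduces to $|x|^{-b}$. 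These observations give $V_R\in C^2$.

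The main task is the error bound \eqref{eq:virial-error}. On $|x|\le R$ we have $a_R=|x|^2$, so $\partial_{jk}a_R=2\delta_{jk}$, $\Delta a_R=2d$, $x\cdot\nabla a_R=2|x|^2$ and $\Delta^2a_R=0$. The integrands over $|x|\le R$ therefore equal
\[
8|\nabla u|^2-\Bigl(\tfrac{4d(2-b)}{d+2-b}+\tfrac{4db}{d+2-b}\Bigr)|x|^{-b}|u|^{2+\frac{4-2b}{d}}
=8|\nabla u|^2-\tfrac{8d}{d+2-b}|x|^{-b}|u|^{2+\frac{4-2b}{d}},
\]
which is exactly the integrand of $16E(u)$. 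Thus $V_R''-16E(u)$ equals the same combination of terms restricted to $|x|>R$, minus the contribution of $16E(u)$ itself on $|x|>R$.

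On $|x|>R$ I would bound each term using the scalings $|\partial_{jk}a_R|\lesssim1$, $|\Delta a_R|\lesssim1$, $|x\cdot\nabla a_R|\,|x|^{-2}\lesssim1$ (since $|\nabla a_R|\lesssim R\sim|x|$ on the support) and $|\Delta^2a_R|\lesssim R^{-2}$. This gives the bound $C\bigl[\int_{|x|>R}|\nabla u|^2+\int_{|x|>R}|x|^{-b}|u|^{2+\frac{4-2b}{d}}+R^{-2}M(u)\bigr]$. The pieces of $16E$ outside the ball are bounded by the same two tail integrals. All the constants depend only on $a$, $d$ and $b$, which yields uniformity in $t$ and $R$.

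The main obstacle I expect is the rigorous justification of the differentiation at the singularity $|x|^{-b}$, which is where $b<d/2$ and the $H^1$ theory of \cite{CG16} enter. I would handle it by regularization, and otherwise defer to \cite{Di18}.
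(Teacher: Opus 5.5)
Your proposal is correct and follows the paper's route. You import the identities \eqref{eq:virial-first}--\eqref{eq:virial-second} from \cite[Corollary~3.2]{Di18}, and you get \eqref{eq:virial-error} by substituting $a_R$: on $|x|\le R$ the integrand is exactly that of $16E(u)$ (your coefficient check $\frac{4d(2-b)+4db}{d+2-b}=\frac{8d}{d+2-b}$ is right), and the remaining contributions on $|x|>R$ are controlled by $|\partial_{jk}a_R|+|\Delta a_R|+|x|^{-2}|x\cdot\nabla a_R|\lesssim1$ and $|\Delta^2a_R|\lesssim R^{-2}$, which is all the paper's ``substituting \eqref{eq:virial-weight}'' amounts to.
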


	To control the first derivative of the localized virial functional, we also use the following phase--energy inequality at the ground-state mass.
	
	\begin{proposition}[Phase--energy inequality, {\cite{B04,CG16}}]\label{prop:banica}
		If $f\in H^1$, $M(f)=M(Q)$, and $a\in C_c^\infty(\R^d;\R)$, then $E(f)\ge0$ and
		\begin{equation} 
			\left|\int\nabla a\cdot\Ima(\overline f\nabla f)\dd x\right|
			\le\sqrt{2E(f)}
			\left(\int|\nabla a|^2|f|^2\dd x\right)^{1/2}.\notag
		\end{equation}
	\end{proposition}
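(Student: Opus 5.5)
The plan is the standard phase-modulation (Banica) argument: twist $f$ by a real phase, observe that the twisted function still lies on the mass threshold, and read off the inequality from the discriminant of a nonnegative quadratic polynomial.

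First, $E(f)\ge0$ follows at once from \eqref{eq:energy-nonnegative} with $\|f\|_{L^2}=\|Q\|_{L^2}$, since the bracket there vanishes.

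Next, for $s\in\R$ set $f_s=e^{isa}f$. Since $a$ is smooth, real and compactly supported, $\nabla a$ is bounded, so $f_s\in H^1$. Moreover $|f_s|=|f|$, so $M(f_s)=M(Q)$, and the first step gives $E(f_s)\ge0$ for every $s$. The potential part of the energy depends only on $|f_s|=|f|$, so it is unchanged. For the kinetic part we compute
\[
\nabla f_s=e^{isa}\bigl(\nabla f+is(\nabla a)f\bigr),
\]
which gives
\[
|\nabla f_s|^2=|\nabla f|^2+s^2|\nabla a|^2|f|^2+2s\,\nabla a\cdot\Ima(\overline f\nabla f).
\]
The cross term comes from $2\Rea\bigl(\overline{\nabla f}\cdot is(\nabla a)f\bigr)=-2s\,\nabla a\cdot\Ima(f\,\overline{\nabla f})$, which equals $2s\,\nabla a\cdot\Ima(\overline f\nabla f)$. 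Hence
\[
0\le E(f_s)=E(f)+sP+\frac{s^2}{2}W,
\qquad P=\int\nabla a\cdot\Ima(\overline f\nabla f)\dd x,\quad W=\int|\nabla a|^2|f|^2\dd x,
\]
for all $s\in\R$.

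Finally we distinguish two cases. If $W=0$, the right-hand side is affine in $s$ and nonnegative for all real $s$, which forces $P=0$, and the inequality holds trivially. If $W>0$, nonnegativity of the quadratic for all real $s$ means its discriminant is nonpositive: $P^2-2E(f)W\le0$. This is $|P|\le\sqrt{2E(f)}\,W^{1/2}$, as claimed.

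There is no real obstacle here. The only points needing care are the sign of the cross term in the expansion of $|\nabla f_s|^2$, which I have checked above, and the fact that the weighted potential term is invariant under the phase twist because it depends only on $|f|$.
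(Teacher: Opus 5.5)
Your argument is correct and is exactly the standard Banica phase-modulation argument behind the citation \cite{B04,CG16}. You apply $E\ge0$ (from \eqref{eq:energy-nonnegative} at mass $M(Q)$) to $e^{isa}f$ and observe that $E(f)+sP+\tfrac{s^2}{2}W\ge0$ for all $s\in\R$ forces $P^2\le 2E(f)W$; the paper gives no separate proof beyond the citation, and your sign check of the cross term and your handling of the degenerate case $W=0$ are both right.
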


	We are now ready to prove the rigidity theorem. The preceding localization estimates make the truncation error in the virial identity small, while the phase--energy inequality controls the first derivative. Together, these bounds rule out positive energy for an almost-periodic threshold solution.

	\begin{theorem}[Rigidity of almost-periodic threshold solutions]
		\label{thm:AP-rigidity}
		Let $d\ge3$ and $\frac12<b<\min\{2,d/2\}$. Suppose that
		$u\in C([0,\infty);H^1(\R^d))$ solves \eqref{eq:inls},
		satisfies $M(u)=M(Q)$, and is almost periodic modulo scaling.
		Then $E(u)=0$, and $u$ has the form \eqref{eq:soliton}.
	\end{theorem}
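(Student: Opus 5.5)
We argue by contradiction and assume $E(u)>0$; once $E(u)=0$ is established, Proposition~\ref{prop:classification} identifies $u(0)$ with $e^{i\theta}\lambda^{d/2}Q(\lambda x)$, and uniqueness in Proposition~\ref{prop:local} then gives \eqref{eq:soliton}. First, almost periodicity implies the reduced Duhamel formula \eqref{eq:reduced} (Proposition~\ref{prop:reduced}), so Theorem~\ref{thm:exterior} applies and hence so does Theorem~\ref{thm:kinetic}. Fix a small $\delta>0$ to be chosen depending only on $E(u)$ and $M(Q)$, and let $C(\delta)$ be as in \eqref{eq:kinetic-localization}. Also choose $A$ large enough that $\|u(t)\|_{L_x^2(|x|>A/N(t))}$ is small, using \eqref{eq:APtails}.

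For $T\ge1$ set
\[
R=R_T:=C_*\sqrt{1+T},
\]
where $C_*$ is chosen using \eqref{eq:virial-scale-control} so that $R/2\ge\max\{C(\delta),A\}\sqrt{1+N(t)^{-2}}$ for all $t\in[0,T]$. Then for every $t\in[0,T]$ the exterior gradient $\int_{|x|>R/2}|\nabla u|^2$ is less than $\delta^2$. Lemma~\ref{lem:potential-tail} bounds the potential-energy tail by $M(Q)^{\frac{2-b}{d}}(\delta^2+R^{-2}M(Q))$, and $R^{-2}M(u)\lesssim R^{-2}$ is small once $C_*$ is large. Plugging these bounds into \eqref{eq:virial-error}, we get $V_R''(t)\ge 16E(u)-C(\delta^2+C_*^{-2})\ge 8E(u)$ on $[0,T]$ after choosing $\delta$ small and $C_*$ large. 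Integrating over $[0,T]$ then yields
\[
V_R'(T)-V_R'(0)\ge 8E(u)\,T.
\]

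For the upper bound, \eqref{eq:virial-first} together with Proposition~\ref{prop:banica}, applied with $a=a_R$ (which satisfies $|\nabla a_R|\lesssim R$), gives
\[
|V_R'(t)|\lesssim R\sqrt{E(u)M(Q)}\lesssim C_*\sqrt{1+T}.
\]
Hence $8E(u)T\lesssim\sqrt{1+T}$, which is impossible as $T\to\infty$. Therefore $E(u)=0$.

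The main obstacle is the bookkeeping in the second step: $R$ must be uniform over $t\in[0,T]$ and also of size $\sqrt{T}$. This is exactly what \eqref{eq:virial-scale-control} provides, since $N(t)^{-1}\lesssim_u\sqrt{1+t}\le\sqrt{1+T}$. The remaining point is that the constants $\delta$, $C(\delta)$ and $C_*$ must be fixed independently of $T$ before letting $T\to\infty$; this holds because each of them depends only on $u$, $E(u)$ and $M(Q)$.
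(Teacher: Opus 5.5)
Your proposal is correct and follows the paper's proof essentially step for step. Kinetic-energy localization together with Lemma~\ref{lem:potential-tail} makes the error in \eqref{eq:virial-error} at most $8E(u)$ for $R_T\sim\sqrt{1+T}$ (using \eqref{eq:virial-scale-control}), while Proposition~\ref{prop:banica} gives $|V_{R_T}'|\lesssim\sqrt{1+T}$, which is incompatible with linear growth. Your explicit appeals to Proposition~\ref{prop:reduced} (to justify Theorem~\ref{thm:exterior}) and to uniqueness (to pass from $u(0)$ to \eqref{eq:soliton}) make precise points the paper leaves implicit; note that your choice of $A$ is superfluous, since you bound the mass factor in Lemma~\ref{lem:potential-tail} by the total mass.
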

	
	\begin{proof}
		By the sharp Gagliardo--Nirenberg inequality \eqref{eq:sharp},
		we have $E(u)\ge0$. Suppose, for contradiction, that $E(u)>0$.
		We shall derive a contradiction using the localized virial
		estimate in Lemma~\ref{lem:virial}. This will force $E(u)=0$,
		and the conclusion will then follow from
		Proposition~\ref{prop:classification}.
		
		First, for every $\delta\in(0,1)$, mass localization
		\eqref{eq:APtails} and Theorem~\ref{thm:kinetic} yield
		a constant $A_\delta\ge1$ such that  $\text{whenever}\quad
		R\ge2A_\delta\sqrt{1+N(t)^{-2}},\qquad t\ge0$, we have  
		\begin{align}
			&\|u(t)\|_{L_x^2(|x|>R/2)}
			+\|\nabla u(t)\|_{L_x^2(|x|>R/2)}
			\le\delta. \notag 
		\end{align}
		Moreover, Lemma~\ref{lem:potential-tail} gives
		\begin{equation} 
			\begin{aligned}
				&\int_{|x|>R}|x|^{-b}
				|u(t,x)|^{2+\frac{4-2b}{d}}\dd x \le
				C\delta^{\frac{4-2b}{d}}
				\bigl(\delta^2+R^{-2}M(Q)\bigr).\notag
			\end{aligned}
		\end{equation}
		Consequently, by \eqref{eq:virial-error},
		\begin{align}
			|V_R''(t)-16E(u)|
			\le{}&
			C\left(\delta^2+\delta^{2+\frac{4-2b}{d}}\right)
			+C\left(1+\delta^{\frac{4-2b}{d}}\right)
			R^{-2}M(Q).
			\label{eq:virial-error-small}
		\end{align}

		We now choose $\delta>0$ sufficiently small and then
		fix $R>0$ sufficiently large so that
		\[
		R\ge2A_\delta C_u,
		\qquad
		C\left(\delta^2+\delta^{2+\frac{4-2b}{d}}\right)
		+C\left(1+\delta^{\frac{4-2b}{d}}\right)
		R^{-2}M(Q)\le8E(u).
		\]
		For each $T\ge1$, set
		\begin{equation} 
			R_T=R\sqrt{1+T}.\notag
		\end{equation}
		By \eqref{eq:virial-scale-control}, we have
		\[
		R_T\ge2A_\delta\sqrt{1+N(t)^{-2}},
		\qquad 0\le t\le T.
		\]
		Therefore, \eqref{eq:virial-error-small} yields
		\begin{equation}\label{eq:positive-virial}
			V_{R_T}''(t)\ge8E(u)>0,
			\qquad 0\le t\le T.
		\end{equation}
		
		Next, we estimate the first derivative of the localized
		virial functional. By Proposition~\ref{prop:banica} and
		\eqref{eq:virial-first}, we obtain
		\begin{align}
			|V_R'(t)|
			&\le2\sqrt{2E(u)}
			\left(\int|\nabla a_R|^2|u(t,x)|^2\dd x\right)^{1/2}
			\notag\\
			&\le2\sqrt{2}\,\|\nabla a\|_{L_x^\infty}
			R\sqrt{E(u)M(Q)}.
			\label{eq:virial-first-bound}
		\end{align}
		Integrating \eqref{eq:positive-virial} over $[0,T]$
		and applying \eqref{eq:virial-first-bound} at both
		endpoints, we obtain
		\begin{align*}
			8E(u)T
			&\le V_{R_T}'(T)-V_{R_T}'(0)\\
			&\le4\sqrt{2}\,\|\nabla a\|_{L_x^\infty}
			R\sqrt{1+T}\sqrt{E(u)M(Q)}.
		\end{align*}
	Since $R$ is fixed independently of $T$, the preceding inequality is impossible as $T\to\infty$. Therefore $E(u)=0$, and Proposition~\ref{prop:classification} yields the claimed classification.
	
	\end{proof}
 
\section{Proof of the main results}\label{sec:conclusion}

In this section, we complete the proofs of the main results stated in the introduction. In the first subsection, we combine the compactness result established in Section~\ref{sec:compactness} with the rigidity theorem proved in Section~\ref{sec:kinetic} to classify forward-global nonscattering solutions at the ground-state mass. Together with the finite-time blowup classification of Combet and Genoud~\cite{CG16}, this yields the nonradial scattering result and the threshold trichotomy for $b>1/2$. In the second subsection, we use the pseudoconformal transformation and the finite-time classification to establish the corresponding rigidity and scattering results for finite-variance data throughout the full parameter range \eqref{eq:full-range}.

 \subsection{Rigidity, nonradial scattering, and the threshold trichotomy}
 
 The main theorem follows directly from the compactness result in Proposition~\ref{prop:compactness} and the rigidity theorem for almost-periodic solutions.
 
 \begin{proof}[Proof of Theorem~\ref{thm:main}]
 	By Proposition~\ref{prop:local}, failure of forward $L^2$ scattering means that the forward critical mixed norm is infinite. Proposition~\ref{prop:compactness} gives almost periodicity modulo scaling of the original solution, and Theorem~\ref{thm:AP-rigidity} then yields \eqref{eq:soliton}.
 \end{proof}
 
 We next consider nonradial initial data at the ground-state mass. The finite-time blowup classification and the rigidity result exclude the possible obstructions to global existence and scattering.
 
 \begin{proof}[Proof of Corollary~\ref{cor:nonradial}]
 	Let $(T_-,T_+)$ be the maximal lifespan. If $T_+<\infty$, the $H^1$ blowup alternative and Proposition~\ref{prop:classification} force the solution to take the form \eqref{eq:pseudoconformal}. Its initial datum is then radial about the origin, a contradiction. The same argument applied to the time-reversed solution $\overline{u(-t,x)}$ excludes $T_->-\infty$. Thus the solution exists on all of $\R$.
 	
 	If the solution failed to scatter forward in $L^2$, Theorem~\ref{thm:main} would force its initial datum to be a phase rotation and scaling of the radial ground state, again a contradiction. Time reversal gives backward $L^2$ scattering.  
 	By Proposition~\ref{prop:local}(f), both scattering statements can be upgraded to $H^1$.
 \end{proof}
 
 We now prove the threshold trichotomy by combining the global rigidity result with the finite-time classification. The only additional point is to verify scattering at the infinite endpoint of a pseudoconformal solution.
 
 \begin{proof}[Proof of Corollary~\ref{cor:trichotomy}]
 	Let $(T_-,T_+)$ be the maximal lifespan of $u$.
 	
 	First, suppose that $T_-=-\infty$ and $T_+=\infty$.
 	By Theorem~\ref{thm:main} and its time-reversed version,
 	if $u$ fails to scatter in either time direction, then
 	$u$ must be a standing wave, giving case (ii).
 	Otherwise, the critical spacetime norms are finite
 	in both directions, and Proposition~\ref{prop:local}
 	yields $H^1$ scattering as $t\to\pm\infty$.
 	This gives case (i).
 	
 	Next, suppose that one endpoint of the lifespan is finite.
 	Without loss of generality, assume $T_+=T<\infty$.
 	By the finite-time minimal-mass blowup classification
 	of Combet--Genoud, stated in
 	Proposition~\ref{prop:classification},
 	there exist $\theta\in\R$ and $\lambda>0$ such that
 	\[
 	u(t,x)=e^{i\theta}e^{i\lambda^2/(T-t)}
 	e^{-i|x|^2/[4(T-t)]}
 	\left(\frac{\lambda}{T-t}\right)^{d/2}
 	Q\!\left(\frac{\lambda x}{T-t}\right),
 	\qquad t<T.
 	\]
 	Since this explicit solution is defined on the entire
 	half-line $(-\infty,T)$, it follows that $T_-=-\infty$.
 	In particular, both lifespan endpoints cannot be finite.
 	
 	It remains to show that this pseudoconformal solution
 	scatters in $H^1$ as $t\to-\infty$. 
 	Since  $Q\in H^1(\mathbb{R} ^d)$ and   $\frac{2(d+4-2b)}{d+2-2b}\in (2,\frac{2d}{d-2})$, we deduce from  
 	Sobolev  embedding   that  $	Q\in L^{\frac{2(d+4-2b)}{d+2-2b},2}(\R^d)$. Hence, for every $t_0<T$,
 	\begin{align*}
 		&\int_{-\infty}^{t_0}
 		\|u(t)\|_{L_x^{\frac{2(d+4-2b)}{d+2-2b},2}}^{\frac{2(d+4-2b)}d}
 		\dd t =\int_{-\infty }^{t_0}	\left(\frac{\lambda}{T-t}\right)^2
 		\|Q\|_{L_x^{\frac{2(d+4-2b)}{d+2-2b},2}}^{\frac{2(d+4-2b)}d}dt=
 		\frac{\lambda^2}{T-t_0}
 		\|Q\|_{L_x^{\frac{2(d+4-2b)}{d+2-2b},2}}^{\frac{2(d+4-2b)}d}
 		<\infty.
 	\end{align*}
 	Thus the critical spacetime norm is finite on
 	$(-\infty,t_0]$.  Proposition~\ref{prop:local} then 
 	implies that $u$ scatters in $H^1$ as $t\to-\infty$.
 	This establishes case (iii) when the right lifespan
 	endpoint is finite. The case of a finite left endpoint
 	follows by time reversal.
 \end{proof}
 
 \subsection{The finite-variance case and pseudoconformal compactification}
 \label{subsec:finite-variance}
 
 We now turn to solutions with finite variance, working in the full parameter range \eqref{eq:full-range}. Unlike the preceding arguments, the proof does not rely on exterior frequency decay. Instead, we use the pseudoconformal invariance of \eqref{eq:inls} and the finite-time blowup classification of Combet--Genoud~\cite[Theorem 1 and Lemma 9]{CG16}.
 
 To apply the pseudoconformal transformation in the energy space, we first recall that finite variance is preserved on compact lifespan intervals.
 
 \begin{lemma}[Persistence of finite variance, {\cite[Lemma~3.3]{Di18}}]\label{lem:Sigma-persistence}
 	Assume \eqref{eq:full-range}. Let $u\in C(I;H^1(\R^d))$ be a strong solution of \eqref{eq:inls}, where $I$ is an interval, and suppose $u(t_0)\in\Sigma$ for some $t_0\in I$. Then $u\in C(I;\Sigma)$.  
 \end{lemma}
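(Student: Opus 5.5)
We prove Lemma~\ref{lem:Sigma-persistence} with the standard regularized-weight virial argument. Since $u(t_0)\in\Sigma$, it suffices to show that on every compact subinterval $J\subset I$ containing $t_0$, the quantity $\|xu(t)\|_{L_x^2}$ stays bounded and $t\mapsto xu(t)$ is continuous in $L_x^2$. Together with $u\in C(I;H^1)$, this gives $u\in C(I;\Sigma)$.

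\emph{Step 1: truncated weights.} Fix $\varepsilon>0$ and set $w_\varepsilon(x)=|x|^2e^{-\varepsilon|x|^2}$, which is bounded with bounded gradient, and $|\nabla w_\varepsilon(x)|\lesssim |x|e^{-\varepsilon|x|^2}\lesssim\sqrt{w_\varepsilon(x)}$ uniformly in $\varepsilon$. Let $F_\varepsilon(t)=\int w_\varepsilon|u(t)|^2\dd x$, which is finite for every $t$. Using $u\in C_tH^1_x\cap C^1_tH^{-1}_x$ on $J$ (the weighted nonlinear estimate of \cite{CG16}, as already used in Lemma~\ref{lem:inner}), we may differentiate $F_\varepsilon$. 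The key structural point is that the nonlinearity $|x|^{-b}|u|^{\frac{4-2b}{d}}u$ contributes nothing: it is a real multiple of $u$, so its pairing with $w_\varepsilon\overline u$ is real, and the imaginary part vanishes. This gives
\[
F_\varepsilon'(t)=2\Ima\int\nabla w_\varepsilon\cdot\overline u\nabla u\dd x .
\]
To justify this rigorously, I would mollify the data in $H^1$ and use continuous dependence (Proposition~\ref{prop:local}(c),(f)), or pair the equation with $w_\varepsilon u$ in the $H^{-1}$--$H^1$ duality, which is legitimate because $w_\varepsilon u\in H^1$.

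\emph{Step 2: Gronwall bound.} By Cauchy--Schwarz, $|F_\varepsilon'(t)|\lesssim \|\nabla u(t)\|_{L^2}\sqrt{F_\varepsilon(t)}$. Hence $\sqrt{F_\varepsilon(t)}\le\sqrt{F_\varepsilon(t_0)}+C|t-t_0|\sup_J\|\nabla u\|_{L^2}$, uniformly in $\varepsilon$. Since $F_\varepsilon(t_0)\le\|xu(t_0)\|_{L^2}^2$, letting $\varepsilon\to0$ and applying monotone convergence yields $\sup_J\|xu(t)\|_{L^2}<\infty$.

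\emph{Step 3: continuity.} Removing the cutoff, the same identity gives $F_0(t)=\|xu(t)\|_{L^2}^2$ as an integral of the continuous function $4\Ima\int x\cdot\overline u\nabla u\dd x$, so $\|xu(t)\|_{L^2}$ is continuous. Combined with weak continuity of $xu(t)$ in $L^2$ (boundedness plus $L^2$-continuity of $u$), continuity of the norm upgrades this to strong continuity.

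The main technical point is the justification in Step 1, namely that the singular coefficient $|x|^{-b}$ causes no trouble in the duality pairing. This rests on the estimate $\||x|^{-b}|u|^{\frac{4-2b}{d}}u\|_{H^{-1}}\lesssim\|u\|_{H^1}^{1+\frac{4-2b}{d}}$, which holds in the range \eqref{eq:full-range} by the Lorentz-space H\"older and Sobolev inequalities. Given that estimate, the remaining argument is routine.
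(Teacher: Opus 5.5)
Your argument is correct. It is the standard regularized-variance argument that lies behind the cited result, and the paper gives no proof of its own, only the reference \cite[Lemma~3.3]{Di18}. In Step~1, keep the $H^{-1}$--$H^1$ duality pairing with $w_\varepsilon u$: it needs only $u\in C_tH^1_x\cap C^1_tH^{-1}_x$ and the $H^{-1}$ bound on $|x|^{-b}|u|^{\frac{4-2b}{d}}u$, which holds for $b<2$. The mollification alternative would require a higher-regularity and $H^1$-stability theory that you have not set up.
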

 
 The next lemma uses this persistence property to compactify the forward time interval. It also relates the critical spacetime norms before and after the transformation, allowing us to characterize scattering through the behavior of the transformed solution at the endpoint.
 
 \begin{lemma}[Pseudo-conformal transformation, {\cite[Lemma~9]{CG16}}]
 	\label{lem:pc-compactification}
 	Assume \eqref{eq:full-range}, and let $u\in C([0,\infty);H^1)$ be a strong solution with $u(0)\in\Sigma$. For $0\le s<1$ set
 	\begin{equation}\label{eq:pc-compactification}
 		v(s,x)=(1-s)^{-d/2}e^{-i|x|^2/[4(1-s)]}
 		u\!\left(\frac{s}{1-s},\frac{x}{1-s}\right).
 	\end{equation}
 	Then $v\in C([0,1);H^1)$ is a strong solution of the same equation,
 	\begin{equation} 
 		v(0,x)=e^{-i|x|^2/4}u_0(x),\qquad M(v)=M(u_0),\notag
 	\end{equation}
 	and, for every $0<a<1$,
 	\begin{align}
 		&\int_0^{a/(1-a)}
 		\|u(t)\|_{L_x^{\frac{2(d+4-2b)}{d+2-2b},2}}^{\frac{2(d+4-2b)}d}\dd t
 		=
 		\int_0^a
 		\|v(s)\|_{L_x^{\frac{2(d+4-2b)}{d+2-2b},2}}^{\frac{2(d+4-2b)}d}\dd s.
 	\notag
 	\end{align}
 	In particular, if $v$ extends as an $H^1$ solution to an open interval containing $[0,1]$, then $u$ scatters forward in $H^1$.
 \end{lemma}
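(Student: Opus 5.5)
We prove Lemma~\ref{lem:pc-compactification} in three steps: check directly that the lens transform \eqref{eq:pc-compactification} solves \eqref{eq:inls}, change variables in the critical norm, and then apply the local theory at $s=1$.

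\emph{Step 1: $v$ solves the equation.} Write $t=s/(1-s)$ and $y=x/(1-s)$, so that $dt/ds=(1-s)^{-2}$. For smooth, decaying solutions we compute $i\partial_s v+\Delta v$. As for the classical lens transform, the phase $e^{-i|x|^2/[4(1-s)]}$ cancels the first-order terms exactly, and we obtain
\[
(i\partial_s v+\Delta v)(s,x)=(1-s)^{-d/2-2}e^{-i|x|^2/[4(1-s)]}\,(i\partial_t u+\Delta u)(t,y).
\]
The nonlinearity is where mass-criticality and the power of the weight enter. Since $|x|^{-b}=(1-s)^{-b}|y|^{-b}$, we have
\[
|x|^{-b}|v|^{\frac{4-2b}{d}}v=(1-s)^{-b-\frac{d}{2}\cdot\frac{4-2b}{d}-\frac d2}\,e^{-i|x|^2/[4(1-s)]}\,\bigl(|y|^{-b}|u|^{\frac{4-2b}{d}}u\bigr)(t,y).
\]
The exponent is $-b-(2-b)-d/2=-d/2-2$, which matches the linear part. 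Hence $v$ solves \eqref{eq:inls}.

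For $H^1$ data with finite variance we argue by approximation. By Lemma~\ref{lem:Sigma-persistence}, $u\in C([0,\infty);\Sigma)$. The map $u(t)\mapsto v(s)$ is continuous from $\Sigma$ into $H^1$ for $s<1$: the gradient of the phase produces $x u/(1-s)$, which is controlled by the variance. We then approximate $u_0$ in $\Sigma$ by smooth data and pass to the limit in the Duhamel formula using Proposition~\ref{prop:local}(c) together with uniqueness. In this way $v$ is a strong solution in $C([0,1);H^1)$. The identities $v(0)=e^{-i|x|^2/4}u_0$ and $M(v)=M(u_0)$ follow by substituting $s=0$ and changing variables.

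\emph{Step 2: the norm identity.} Set $p=\frac{2(d+4-2b)}{d+2-2b}$ and $q=\frac{2(d+4-2b)}{d}$. The modulus of the phase is $1$, and the Lorentz quasinorm scales like $L^p$ under dilation. This gives
\[
\|v(s)\|_{L^{p,2}}=(1-s)^{-d/2+d/p}\|u(t)\|_{L^{p,2}}.
\]
Now $(d/2-d/p)q=2$, by the admissibility relation $\frac2q+\frac dp=\frac d2$. Therefore $\|v(s)\|_{L^{p,2}}^q\,ds=(1-s)^{-2}\|u(t)\|_{L^{p,2}}^q\,ds=\|u(t)\|_{L^{p,2}}^q\,dt$. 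Integrating from $0$ to $a$, which corresponds to $t$ from $0$ to $a/(1-a)$, yields the stated identity.

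\emph{Step 3: the scattering consequence.} If $v$ extends as an $H^1$ solution to an open interval containing $[0,1]$, then the critical norm of $v$ on $[0,1]$ is finite, because a strong solution is in the Strichartz space on compact subintervals (Definition~\ref{def:solution}). Letting $a\uparrow1$ in the identity gives a finite forward critical norm for $u$. Proposition~\ref{prop:local}(b) then gives forward $L^2$ scattering, and part (f) upgrades it to $H^1$.

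The main obstacle is the rigorous justification in Step 1 at $H^1\cap\Sigma$ regularity; the algebra itself is routine. We handle it by computing with smooth data and concluding by $\Sigma$-continuous dependence, which is exactly the content of \cite[Lemma~9]{CG16}.
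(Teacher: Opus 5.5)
Your proposal is correct and follows essentially the same route as the paper. The paper gives no separate proof: it relies on \cite[Lemma~9]{CG16} together with Lemma~\ref{lem:Sigma-persistence} to place $v$ in $C([0,1);H^1)$. Your lens-transform computation, the exponent count $-b-(2-b)-\frac d2=-\frac d2-2$, the identity $(\frac d2-\frac dp)q=2$ from admissibility of the critical pair, and the final appeal to Proposition~\ref{prop:local}(b) and (f) are exactly the standard content of that citation.

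The one point to tidy is the justification in Step~1. Because of the singular weight $|x|^{-b}$, solutions with smooth data are not smooth, so approximating by smooth data does not justify the formal computation. Instead, verify the transformed equation directly in the sense of distributions for $u\in C_tH^1_x\cap C^1_tH^{-1}_x$, the regularity used in the proof of Lemma~\ref{lem:inner}. This yields the Duhamel formula for $v$ and makes the approximation step unnecessary.
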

 
 We can now prove the finite-variance classification. The transformed solution either extends beyond the compactified endpoint, in which case the preceding lemma gives scattering, or has a finite-time blowup there, in which case the Combet--Genoud classification applies.
 
 \begin{proof}[Proof of Proposition~\ref{prop:finite-variance}]
 	Apply Lemma~\ref{lem:pc-compactification} to the given forward-global solution and let $v$ be defined by \eqref{eq:pc-compactification}. Its positive maximal $H^1$ lifespan endpoint is at least $1$, since the explicit transform is an $H^1$ strong solution on every compact subinterval of $[0,1)$.
 	
 	If the endpoint is strictly larger than $1$, the last assertion of Lemma~\ref{lem:pc-compactification} gives forward $H^1$ scattering of $u$. If the endpoint is exactly $1$, the $H^1$ blowup alternative and mass conservation imply that $v$ is a finite-time blowup solution of mass $M(Q)$. Proposition~\ref{prop:classification}  then gives constants $\gamma\in\R$ and $\lambda>0$ such that
 	\begin{align}
 		v(s,x)
 		={}&e^{i\gamma}e^{i\lambda^2/(1-s)}
 		e^{-i|x|^2/[4(1-s)]}
 		\left(\frac{\lambda}{1-s}\right)^{d/2}
 		Q\!\left(\frac{\lambda x}{1-s}\right),
 		\qquad 0\le s<1.
 		\label{eq:pc-classified-v}
 	\end{align}
 	Substituting  \eqref{eq:pc-compactification} into (\ref{eq:pc-classified-v})   gives
 	\[
 	u(t,y)=e^{i\gamma}e^{i\lambda^2(1+t)}\lambda^{d/2}Q(\lambda y).
 	\]
 	After absorbing the constant $\lambda^2$ into the phase, this is \eqref{eq:soliton}. Since a standing wave  has infinite critical norm on $[0,\infty)$; it cannot belong to the scattering alternative. This proves the dichotomy and, in particular, the asserted classification of every nonscattering forward-global solution in $\Sigma$.
 \end{proof}
 
 Finally, the finite-variance classification can be combined with the nonradial scattering result from the first subsection to cover the parameter ranges stated in Corollary~\ref{cor:combined-scattering}.
 
 \begin{proof}[Proof of Corollary~\ref{cor:combined-scattering}]
 	Let $(T_-,T_+)$ be the maximal lifespan. In the full range \eqref{eq:full-range}, the finite-time classification in Proposition~\ref{prop:classification} forces any critical-mass solution with a finite endpoint to be radial about the origin at every time. Since $u_0$ is nonradial, both finite endpoints are excluded. Thus $u$ is global in both directions.
 	
 	If $b>1/2$, Corollary~\ref{cor:nonradial} proves the desired two-sided $H^1$ scattering. In the remaining case $0<b\le1/2$, the hypothesis of the present corollary requires $xu_0\in L^2$. Apply Proposition~\ref{prop:finite-variance} to $u$ and then to $\overline{u(-t,x)}$. In each direction, the standing-wave alternative contradicts the nonradiality of $u_0$. Both scattering conclusions follow. The same argument proves two-sided scattering for nonradial data in $\Sigma$ throughout \eqref{eq:full-range}, independently of the exterior frequency estimate.
 \end{proof}
 
  \bigskip
 \noindent\textbf{Acknowledgements:}  X.~Liu was supported by NSFC Grant 12501314. 
 
  \bigskip
\noindent\textbf{Declaration of AI use.}
ChatGPT was used for research-related discussions
and to assist with language editing, manuscript
organization, LaTeX preparation, bibliographic
verification, and consistency checks.
The author critically reviewed and revised
all content and takes full responsibility for the manuscript.

	\bigskip
	
	\noindent\textsc{School of Mathematics, Hangzhou Normal University,
		Hangzhou 311121, China}
	
	\noindent\textit{Email address:} \texttt{liuxuan95@hznu.edu.cn}

\bigskip


\begin{thebibliography}{99}
	
	\bibitem{AT21}
	L. Aloui and S. Tayachi:
	Local well-posedness for the inhomogeneous nonlinear Schr\"odinger equation.
	Discrete Contin. Dyn. Syst. \textbf{41}(11), 5409--5437 (2021).
	
	\bibitem{B04}
	V. Banica:
	Remarks on the blow-up for the Schr\"odinger equation with critical mass on a plane domain.
	Ann. Sc. Norm. Super. Pisa Cl. Sci. (5) \textbf{3}(1), 139--170 (2004).
	
	\bibitem{CC22}
	L. Campos and M. Cardoso:
	On the critical norm concentration for the inhomogeneous nonlinear Schr\"odinger equation.
	J. Dynam. Differential Equations \textbf{34}(3), 2347--2369 (2022).
	
	\bibitem{CF22}
	M. Cardoso and L. G. Farah:
	Blow-up of non-radial solutions for the $L^2$ critical inhomogeneous NLS equation.
	Nonlinearity \textbf{35}(8), 4426--4436 (2022).
	
	\bibitem{CG16}
	V. Combet and F. Genoud:
	Classification of minimal mass blow-up solutions for an $L^2$ critical inhomogeneous NLS.
	J. Evol. Equ. \textbf{16}(2), 483--500 (2016).
	
	\bibitem{Di18}
	V. D. Dinh:
	Blowup of $H^1$ solutions for a class of the focusing inhomogeneous nonlinear Schr\"odinger equation.
	Nonlinear Anal. \textbf{174}, 169--188 (2018).
	
	\bibitem{D12}
	B. Dodson:
	Global well-posedness and scattering for the defocusing, $L^2$-critical nonlinear Schr\"odinger equation when $d\ge3$.
	J. Amer. Math. Soc. \textbf{25}(2), 429--463 (2012).
	
	\bibitem{D15}
	B. Dodson:
	Global well-posedness and scattering for the mass critical nonlinear Schr\"odinger equation with mass below the mass of the ground state.
	Adv. Math. \textbf{285}, 1589--1618 (2015).
	
	\bibitem{D16a}
	B. Dodson:
	Global well-posedness and scattering for the defocusing, $L^2$-critical nonlinear Schr\"odinger equation when $d=2$.
	Duke Math. J. \textbf{165}(18), 3435--3516 (2016).
	
	\bibitem{D16b}
	B. Dodson:
	Global well-posedness and scattering for the defocusing, $L^2$-critical nonlinear Schr\"odinger equation when $d=1$.
	Amer. J. Math. \textbf{138}(2), 531--569 (2016).
	
	\bibitem{D23}
	B. Dodson:
	A determination of the blowup solutions to the focusing NLS with mass equal to the mass of the soliton.
	Ann. PDE \textbf{9}(1), Article 3 (2023).
	
	\bibitem{D24}
	B. Dodson:
	A determination of the blowup solutions to the focusing, quintic NLS with mass equal to the mass of the soliton.
	Anal. PDE \textbf{17}(5), 1693--1760 (2024).
	
	\bibitem{G08}
	F. Genoud:
	Th\'eorie de bifurcation et de stabilit\'e pour une \'equation de Schr\"odinger avec une non-lin\'earit\'e compacte.
	Ph.D. thesis, No. 4233, EPFL, Lausanne (2008) (in French).
	
	\bibitem{G10}
	F. Genoud:
	Bifurcation and stability of travelling waves in self-focusing planar waveguides.
	Adv. Nonlinear Stud. \textbf{10}(2), 357--400 (2010).
	
	\bibitem{G11}
	F. Genoud:
	A uniqueness result for $\Delta u-\lambda u+V(|x|)u^p=0$ on $\mathbb{R}^2$.
	Adv. Nonlinear Stud. \textbf{11}(3), 483--491 (2011).
	
	\bibitem{G12}
	F. Genoud:
	An inhomogeneous, $L^2$-critical, nonlinear Schr\"odinger equation.
	Z. Anal. Anwend. \textbf{31}(3), 283--290 (2012).
	
	\bibitem{GS08}
	F. Genoud and C. A. Stuart:
	Schr\"odinger equations with a spatially decaying nonlinearity: existence and stability of standing waves.
	Discrete Contin. Dyn. Syst. \textbf{21}(1), 137--186 (2008).
	
	\bibitem{Gill00}
	T. S. Gill:
	Optical guiding of laser beam in nonuniform plasma.
	Pramana \textbf{55}(5--6), 835--842 (2000).
	
	
	\bibitem{HK05}
	T. Hmidi and S. Keraani:
	Blowup theory for the critical nonlinear Schr\"odinger equations revisited.
	Int. Math. Res. Not. \textbf{2005}(46), 2815--2828 (2005).
	
	
	\bibitem{KT98}
	M. Keel and T. Tao:
	Endpoint Strichartz estimates.
	Amer. J. Math. \textbf{120}(5), 955--980 (1998).
	
	\bibitem{KM06}
	C. E. Kenig and F. Merle:
	Global well-posedness, scattering and blow-up for the energy-critical, focusing, non-linear Schr\"odinger equation in the radial case.
	Invent. Math. \textbf{166}(3), 645--675 (2006).
	
	\bibitem{KLVZ09}
	R. Killip, D. Li, M. Visan and X. Zhang:
	Characterization of minimal-mass blowup solutions to the focusing mass-critical NLS.
	SIAM J. Math. Anal. \textbf{41}(1), 219--236 (2009).
	
	\bibitem{KTV09}
	R. Killip, T. Tao and M. Visan:
	The cubic nonlinear Schr\"odinger equation in two dimensions with radial data.
	J. Eur. Math. Soc. (JEMS) \textbf{11}(6), 1203--1258 (2009).
	
	\bibitem{KVZ08}
	R. Killip, M. Visan and X. Zhang:
	The mass-critical nonlinear Schr\"odinger equation with radial data in dimensions three and higher.
	Anal. PDE \textbf{1}(2), 229--266 (2008).
	
	\bibitem{KV13}
	R. Killip and M. Visan:
	Nonlinear Schr\"odinger equations at critical regularity.
	In: Evolution Equations, Clay Math. Proc., vol. 17, pp. 325--437.
	American Mathematical Society, Providence (2013).
	
	\bibitem{LZ12}
	D. Li and X. Zhang:
	On the rigidity of solitary waves for the focusing mass-critical NLS in dimensions $d\ge2$.
	Sci. China Math. \textbf{55}(2), 385--434 (2012).
	
	\bibitem{LT94}
	C. S. Liu and V. K. Tripathi:
	Laser guiding in an axially nonuniform plasma channel.
	Phys. Plasmas \textbf{1}(9), 3100--3103 (1994).
	
	\bibitem{LMZ25}
	X. Liu, C. Miao and J. Zheng:
	Global well-posedness and scattering for mass-critical inhomogeneous NLS when $d\ge3$.
	Math. Z. \textbf{311}, Article 54 (2025).
	
	\bibitem{M93}
	F. Merle:
	Determination of blow-up solutions with minimal mass for nonlinear Schr\"odinger equations with critical power.
	Duke Math. J. \textbf{69}(2), 427--454 (1993).
	
	\bibitem{M96}
	F. Merle:
	Nonexistence of minimal blow-up solutions of equations
	$iu_t=-\Delta u-k(x)|u|^{4/N}u$ in $\mathbb{R}^N$.
	Ann. Inst. H. Poincar\'e Phys. Th\'eor. \textbf{64}(1), 33--85 (1996).
	
	\bibitem{ON63}
	R. O'Neil:
	Convolution operators and $L(p,q)$ spaces.
	Duke Math. J. \textbf{30}(1), 129--142 (1963).
	
	\bibitem{TVZ07}
	T. Tao, M. Visan and X. Zhang:
	Global well-posedness and scattering for the defocusing mass-critical nonlinear Schr\"odinger equation for radial data in high dimensions.
	Duke Math. J. \textbf{140}(1), 165--202 (2007).
	
	\bibitem{TVZ08}
	T. Tao, M. Visan and X. Zhang:
	Minimal-mass blowup solutions of the mass-critical NLS.
	Forum Math. \textbf{20}(5), 881--919 (2008).
	
	\bibitem{T84}
	J. F. Toland:
	Uniqueness of positive solutions of some semilinear Sturm--Liouville problems on the half line.
	Proc. Roy. Soc. Edinburgh Sect. A \textbf{97}, 259--263 (1984).
	
	\bibitem{Y91}
	E. Yanagida:
	Uniqueness of positive radial solutions of
	$\Delta u+g(r)u+h(r)u^p=0$ in $\mathbb{R}^n$.
	Arch. Ration. Mech. Anal. \textbf{115}(3), 257--274 (1991).
	
\end{thebibliography}
\end{document}